\documentclass[10pt,letterpaper]{article}
\usepackage[margin=1in]{geometry}
\usepackage[T1]{fontenc}
\usepackage{lmodern,amsmath,amssymb,amsthm,microtype}
\usepackage[hidelinks,hypertexnames=false]{hyperref}
\newtheoremstyle{paper}{6pt}{6pt}{\normalfont}{}{\bfseries}{.}{.5em}{}
\theoremstyle{paper}
\newtheorem{theorem}{Theorem}[section]
\newtheorem{lemma}[theorem]{Lemma}
\newtheorem{proposition}[theorem]{Proposition}
\newtheorem{corollary}[theorem]{Corollary}
\newtheorem{definition}[theorem]{Definition}
\newcommand{\Aut}{\operatorname{Aut}}
\newcommand{\dist}{\operatorname{dist}}
\numberwithin{equation}{section}
\newcommand{\sq}{\mathbin{\square}}
\newcommand{\eps}{\varepsilon}
\newcommand{\codeg}{\operatorname{codeg}}
\title{\large\bfseries Reconstruction and Symmetry of Two-Token Graphs}
\author{Yuanming Luo    Hao Hu}
\date{October 2026}
\begin{document}
\maketitle

\begin{abstract}
We present a proof of the two-token graph reconstruction conjecture. Every finite simple graph $G,H$ of order at least three, $F_2(G) \cong F_2(H)$ if and only if $G \cong H$. We also classify the automorphism groups of two-token graphs whose connected roots have order at least five. Combining these results, we show that, for connected graphs \(G,H\) of order at least five, every isomorphism \(\phi:F_2(G)\to F_2(H)\) admits a factorization \(\phi=\eta\circ\sigma^{(2)}\), where \(\sigma^{(2)}\) is induced by a graph isomorphism \(\sigma:G\to H\), and \(\eta\in \Aut(F_2(H))\) is a product of twin-cut switches or Cartesian exchanges.
\end{abstract}

\section{Introduction}
All graphs in this paper are finite, simple and undirected.
For a graph $G$ of order $n\ge3$, the two-token graph $F_2(G)$ has
vertex set $\binom{V(G)}2$.
$P,Q \in V(F_2(G))$ are adjacent when $P\triangle Q$ is an edge of $G$.
If $X\cong F_2(G)$, we call $G$ a two-token root of $X$. We write $xy=\{x,y\}$ and identify
$E(G)\subseteq\binom{V(G)}2$. For $v\in V(G)$, the edge star is
$\delta_G(v)=\{vx:x\in N_G(v)\}$.

The construction for two tokens was studied as the \emph{double vertex graph}
in earlier work of Alavi et al.; see Jacob, Goddard and Laskar
\cite{double-reconstruction}, who also considered reconstruction from it.
Fabila-Monroy et al.\ \cite{tokens} developed the construction for arbitrary
numbers of tokens. The reconstruction question asks whether a token graph
determines its root up to isomorphism.

Fabila-Monroy and Trujillo-Negrete
\cite[Theorems~1.2 and~1.4]{reconstruction-free} proved
reconstruction for connected graphs with no induced $C_4$ or diamond,
at every admissible rank $1\le k<|V(G)|$. In earlier work, we \cite{almost-every} proved that almost every 
graph is reconstructible at all admissible ranks.
The reconstruction theorem below settles the two-token case for every
graph of order at least three.

Fabila-Monroy and Trujillo-Negrete also determined the token automorphism
groups for the same connected
$(C_4,\mathrm{diamond})$-free class \cite[Proposition~1.5]{reconstruction-free}.
When $k\ne \frac{|V(G)|}{2}$, the group is isomorphic to $\Aut(G)$.
At middle rank, for roots of order at least four, it is isomorphic to the
direct product of $\Aut(G)$ with the order-two group generated by complementation
$P\mapsto V(G)\setminus P$. For two tokens this middle-rank phenomenon
occurs only at order four.

One important symmetry of $F_2(G)$ are identified from Cartesian products. For connected roots of order at least five, additional symmetries can arise
from Cartesian products. Zhang, Zhou, Li and Kwon constructed coordinate-exchange
symmetries for connected Cartesian products
\cite[Proof of Theorem~1.2]{cartesian-aut}.
They also showed that these, together with induced automorphisms, generate
the full group for hypercubes of dimension at least three \cite{cartesian-aut}. Zhang, Zhou, Lee, Li and Xie
\cite[Conjecture~1.1]{hamming-aut} conjectured the same description for every
connected graph of order at least five that is a Cartesian product of at least
two connected nontrivial prime graphs, and verified it for Hamming graphs.
Subsequent work
determined the full groups for products of two cycles \cite{cycles-aut}
and for products whose factors are connected, regular and contain no
induced 4-cycle \cite{regular-products}. The product case of
Theorem~\ref{thm:groups} settles Conjecture~1.1 of \cite{hamming-aut}.

The other important symmetry are from twins. Fabila-Monroy et al.\ \cite{twincuts} constructed component flips at
two-cuts whose endpoints are twins. For connected roots, a partial converse
in \cite[Theorems~1 and~3--4]{untwisting} shows that if an isomorphism sends
an induced token 4-cycle generated by two disjoint edges to one generated by
moving two tokens on a 4-cycle, then both roots have twin cuts. Such twists
can be removed by twin-cut flips. For connected roots of order at least five,
we prove
that twin-cut component switches and Cartesian coordinate exchanges
are all the additional symmetries of $\Aut(F_2(G))$ other than $\Aut(G)$. This also gives a normal form for
every isomorphism between their two-token graphs.

\begin{theorem}[Reconstruction]\label{thm:all}
For finite simple graphs $G,H$ of order at least three,
$F_2(G)\cong F_2(H)$ implies $G\cong H$.
\end{theorem}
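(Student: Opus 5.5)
The approach is to recover $G$ from the abstract graph $X=F_2(G)$ by an intrinsic, isomorphism-invariant reconstruction of the edge stars $\delta_G(v)$. Two preliminary invariants come first. Every edge of $X$ joins $\{a,x\}$ to $\{b,x\}$ with $ab\in E(G)$, so it carries a \emph{label} $ab$ and a \emph{spectator} $x$. Hence $|E(X)|=(n-2)|E(G)|$, $|V(X)|=\binom n2$, and both $n$ and $|E(G)|$ are determined by $X$. Disconnected roots reduce to connected ones: the components of $F_2(G)$ are the products $F_2(C)$, $C\sq C'$ for pairs of distinct components $C,C'$, and, if some component is a single vertex $w$, copies of $C$ indexed by such $w$. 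Unique prime factorization for connected Cartesian products then lets us read off the components. I would first dispose of isolated vertices and small orders $n\le 5$ by a finite case check. After that we may assume $G$ connected of order $n\ge 6$, with the sporadic small cases handled by counting.

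The core is the structure of short cycles in $X$. Every triangle of $X$ has the form $\{a,x\},\{b,x\},\{c,x\}$ with $abc$ a triangle of $G$ and a common spectator $x$. Induced $4$-cycles come in three types:
\begin{itemize}
\item the \emph{product squares} $\{a,c\},\{b,c\},\{b,d\},\{a,d\}$ for disjoint edges $ab,cd$;
\item squares arising from $C_4$'s of $G$ with a fixed spectator;
\item the mixed ``two tokens on a $4$-cycle'' squares $\{a,c\},\{b,c\},\{b,d\},\{a,d\}$ where $abcd$ is an induced $4$-cycle, or the analogue for diamonds.
\end{itemize}
Define an equivalence on $E(X)$ generated by opposite edges of $4$-cycles of the first type, together with triangle adjacency. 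The target is to show that the classes are exactly the sets $\{\{a,x\}\{b,x\}: x\ne a,b\}$ for a fixed label $ab$, i.e.\ the \emph{label classes}. Given label classes, the vertices of $G$ are recovered as follows. A vertex $P=\{u,v\}$ of $X$ has incident labels $\delta_G(u)\setminus\{uv\}\cup\delta_G(v)\setminus\{uv\}$. The label sets at vertices with a common element $u$ share $\delta_G(u)$ minus one edge, which defines a maximal-clique structure recovering $V(G)$ and incidence. A cleaner route is to identify, for each label $ab$, the spectator set $V\setminus\{a,b\}$ as the set of $X$-vertices meeting that class. Then $G$ is the graph on label classes where labels $ab$ and $ac$ are ``adjacent'' iff their spectator vertex sets intersect in a specific way. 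From the line-graph data one recovers $G$ via Whitney's theorem, with the exceptions $K_3$ versus $K_{1,3}$ separated by $n$.

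The main obstacle is that opposite-edge equivalence can fail to distinguish label classes. Type-two and type-three squares glue different labels together, and this is exactly where twin cuts and Cartesian products create genuine extra automorphisms. So one cannot hope to recover the labelling canonically, only up to those symmetries. I would therefore aim for a weaker invariant statement: distinguish type-one squares from the others by local counts. For instance, a product square $\{a,c\}\dots$ lies in a subgraph $K_2\sq K_2$ whose edges extend to $\delta(a)\times\delta(c)$-grids, whereas a mixed square forces common-neighbour counts of $4$. Then show that any remaining ambiguity corresponds to a twin-cut switch or a Cartesian exchange, and that both of these are realised by automorphisms of $F_2(G)$ composed with an isomorphism of roots. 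Concretely, first try to prove that every isomorphism $F_2(G)\to F_2(H)$ can be modified by such switches until it maps label classes to label classes, by induction on the number of mixed squares mapped to product squares, using the untwisting results of \cite{untwisting}. Once label classes are preserved, the Whitney step above yields $G\cong H$.
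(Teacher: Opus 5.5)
\textbf{Your connected case, which is the heart of the theorem, is a programme rather than a proof, and several of its intermediate claims are false.}

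\emph{The equivalence does not give label classes.} Opposite edges of a product square always carry the same label. Inside the label class of $ab$, the edges with spectators $x,y$ are opposite in a product square only when $xy\in E(G)$. So the class splits along the components of $G-\{a,b\}$, for instance at any edge of a tree joining two non-leaves. Meanwhile ``triangle adjacency'' merges the distinct labels $ab,bc,ca$. Your triangle list also omits the triangles $\{ab,bc,ca\}$ of $X$ that come from triangles $abc$ of $G$.

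\emph{No invariant local count separates product squares from fixed-spectator squares.} In $G=A\sq B$, the coordinate exchange is an automorphism fixing $E(G)$ pointwise (Lemma~\ref{lem:exchange-sound}). It carries the fixed-spectator square with spectator $(a,b)$ over a coordinate square $(a_1,b_1),(a_2,b_1),(a_2,b_2),(a_1,b_2)$ onto the product square of the disjoint edges $(a,b_1)(a,b_2)$ and $(a_1,b)(a_2,b)$. So labels are not invariant at all.

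\emph{The decisive step is left unproved.} Everything therefore rests on modifying an arbitrary isomorphism by switches and exchanges until it preserves labels, which you only propose to ``try to prove''. The result of \cite{untwisting} you would induct with is only a partial converse covering one kind of twist: a disjoint-edge square sent to a square of two tokens moving around a $4$-cycle. It does not supply the Cartesian corrections. It also does not exclude root-edge configurations being sent outside $E(H)$, which genuinely happens at order four (complementation on $F_2(P_4)$).

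\textbf{The missing ideas are the paper's.}

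\emph{Invariance of $E(G)$.} The paper never uses labels. It treats $E(G)$ as a set of \emph{vertices} of $F_2(G)$. Theorem~\ref{thm:edge-state-preservation} proves that every isomorphism between two-token graphs of connected roots of order at least five maps it onto $E(H)$. The argument analyses the induced $K_2\sq J$ around a hypothetical edge with nonedge image, then uses an edge count forcing $n=4$. Because coordinate exchanges fix $E(G)$ pointwise, Cartesian structure never needs correcting for reconstruction.

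\emph{Control of incidence.} The corrected fourth-moment identity (Lemma~\ref{lem:corrected-moment}) shows that incident edges can have disjoint images only at twin pairs. Component switches repair these, the restriction to $E(G)$ becomes a line-graph isomorphism, and Whitney applies (Theorem~\ref{thm:marked-map-normalization}).

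\emph{The disconnected reduction.} This also needs more than unique factorization.
\begin{itemize}
\item To tell internal sectors from mixed ones, you need that $F_2$ of a connected root is Cartesian-prime (Lemma~\ref{lem:prime}).
\item Isolated vertices cannot be removed by a finite check. With $\iota$ isolates, every component $C$ reappears $\iota$ times as a sector $C\sq K_1\cong C$, and it may be isomorphic to $F_2(\Gamma)$ for another component $\Gamma$.
\item The paper first recovers the numbers of components and isolates from $|\pi_0(F_2(K))|=\binom{s+1}{2}-\iota(K)$. It then resolves the overlap using $\mu_{F_2(K)}(F_2(\Gamma))=\mu_K(\Gamma)+\iota(K)\mu_K(F_2(\Gamma))$ and an extremal argument on the largest order.
\end{itemize}
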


An isomorphism $\sigma:G\to H$ induces the pair map
$\sigma^{(2)}(xy)=\sigma(x)\sigma(y)$. Write
$\Aut(G)^{(2)}=\{\sigma^{(2)}:\sigma\in\Aut(G)\}$.
At order at least three, the pair map determines $\sigma$: for distinct
$v,x,y\in V(G)$,
\begin{equation}\label{eq:pair-map-faithfulness}
 \sigma^{(2)}(vx)\cap\sigma^{(2)}(vy)=\{\sigma(v)\}.
\end{equation}
Two distinct vertices are \emph{twins} if they have the same neighbours outside
the pair. In a connected graph, a twin pair whose deletion disconnects the
graph is a \emph{twin 2-cut}. For such a pair $S=\{a,b\}$ and a component
$C$ of $G-S$, the \emph{component switch} exchanges $ax$ and $bx$ for
$x\in V(C)$ and fixes every other configuration.
A \emph{coordinate exchange} swaps the two tokens' coordinates in one
factor of a fixed Cartesian decomposition.
Definitions~\ref{def:component-switch} and~\ref{def:coordinate-exchange}
give the precise constructions. A connected graph is \emph{Cartesian-prime}
if it has no Cartesian decomposition with both factors of order at least two.

\begin{theorem}[Isomorphism normal form]\label{thm:exhaustion}
Let \(G,H\) be connected graphs of common order at least five, and let
\(\phi:F_2(G)\to F_2(H)\) be an isomorphism. There exist an
isomorphism \(\sigma:G\to H\) and \(\eta\in\Aut(F_2(H))\) such that
\(\phi=\eta\sigma^{(2)}\).
If \(G\) is Cartesian-prime, \(\eta\) can be chosen as a product of
twin-cut component switches in \(H\). If \(G\) is a nontrivial
Cartesian product, it can be chosen as a product of coordinate exchanges in \(H\).
If \(G\) is Cartesian-prime and has no twin 2-cut, then
\(\phi=\sigma^{(2)}\), with \(\sigma\) unique.
For either root graph, its coordinate-exchange and induced subgroups have
trivial intersection.
\end{theorem}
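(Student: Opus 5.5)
The plan is to recover the root from the local structure of $F_2(G)$. First I identify, inside $F_2(H)$, the images under $\phi$ of the \emph{vertex fibres} $L_v=\{vx:x\in V(G)\setminus\{v\}\}$. Each $L_v$ induces a copy of $G-v$ in $F_2(G)$. Two fibres $L_v,L_w$ meet in the single configuration $vw$, and edges of $F_2(G)$ are labelled by the moving edge of $G$.

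The basic tool is the classification of induced 4-cycles of $F_2(G)$. There are two kinds: \emph{product squares}, generated by two disjoint edges $xy,uv$ of $G$, and \emph{rotation squares}, in which two tokens move around an induced $C_4$ of $G$. Using the partial converse of \cite{untwisting}, whenever $\phi$ sends a product square to a rotation square, both roots have twin cuts. I compose $\phi$ with a suitable product of twin-cut component switches to remove every such twist. The resulting isomorphism $\phi'$ maps product squares to product squares.

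For $\phi'$, declare two edges of $F_2(G)$ \emph{parallel} if they are opposite sides of a product square, and take the transitive closure. Each class is then exactly the set of token moves along one fixed edge $e$ of $G$, with the degenerate exception occurring only when $G$ has very small order. This is where order at least five is used. Hence $\phi'$ induces a bijection $\tau:E(G)\to E(H)$. I would then show that $\tau$ preserves adjacency of edges, that is, sharing an endpoint versus being disjoint. Disjoint edges are exactly those pairs whose classes span a common product square.

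Next I treat the cases.

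\emph{Cartesian-prime case.} I use a Whitney-type line graph argument, checking also triangles versus stars through configurations, to upgrade $\tau$ to a vertex map $\sigma$ with $\phi'=\sigma^{(2)}$. Uniqueness of $\sigma$ then follows from \eqref{eq:pair-map-faithfulness}. If there is no twin 2-cut, no switches were needed, so $\phi=\sigma^{(2)}$.

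\emph{Nontrivial Cartesian product.} The parallel classes may glue along factors. I use the unique prime factorization of connected graphs into Cartesian-prime factors. Edges of $F_2(G)$ within one factor coordinate may be matched to the same factor with the two tokens' coordinates exchanged. Composing with coordinate exchanges (Definition~\ref{def:coordinate-exchange}) normalizes $\phi$ to one preserving the token-wise factor structure, and then the prime argument applies factorwise.

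\emph{Trivial intersection.} Suppose a product of coordinate exchanges equals some $\sigma^{(2)}$. Evaluate both on a configuration whose tokens differ in two coordinates: the exchange mixes coordinates between tokens, while $\sigma^{(2)}$ must respect \eqref{eq:pair-map-faithfulness} on fibres $L_v$. Comparing the intersections $\sigma^{(2)}(vx)\cap\sigma^{(2)}(vy)$ forces the exchange to be trivial.

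The main obstacle is the product case. There, product squares arising from different factors interact, and the edge-class argument breaks down. Proving that every isomorphism differs from an induced one only by coordinate exchanges, which is essentially the conjecture of \cite{hamming-aut}, is the hard step. I would attack it by analysing the maximal "grid" subgraphs $L_v\cong G-v$ and the positions of their intersections.
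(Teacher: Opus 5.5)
You have genuine gaps, starting with the step everything else rests on. The parallel classes are not indexed by edges of $G$. Two token moves along $e=xy$, with stationary tokens $z,z'$, are opposite sides of a square generated by two disjoint edges exactly when $zz'\in E(G)$ and $z,z'\notin\{x,y\}$, and opposite sides of such a square always move along the same edge. The transitive closure therefore splits the moves along $e$ according to the components of $G-\{x,y\}$, and this happens at every order. For example, $K_{1,n-1}$ has no two disjoint edges, so all $(n-2)(n-1)$ token edges are singleton classes. No bijection $\tau:E(G)\to E(H)$ results, and this is not where order five enters.

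Your list of induced 4-cycles also omits the squares $(ra,rb,rc,rd)$, in which one token is fixed at $r$ and the other runs around an induced 4-cycle of $G-r$. A coordinate exchange can send such a square to one generated by two disjoint edges, even with no twins present. In $P_3\sq K_2$, exchanging second coordinates sends the square with fixed token $(2,0)$ and the other token on $\{0,1\}\times\{0,1\}$ to the square generated by $(0,0)(1,0)$ and $(2,0)(2,1)$. So your claim that $\phi'$ maps product squares to product squares is false for products. For prime roots it is not supplied by the cited partial converse, which rules out only one specific kind of twist. Excluding the other kind is exactly where primality must enter, and you give no argument for it. The product case itself you leave open, and the plan via the fibres $L_v$ does not indicate how to proceed.

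The paper closes these gaps with different tools. First, Theorem~\ref{thm:edge-state-preservation} shows that every isomorphism maps the configuration set $E(G)$ onto $E(H)$. It compares the induced $K_2\sq J$ around a putative edge-to-nonedge image with two copies of $G-\{a,b\}$, which forces $n=4$. This, not a parallel-class argument, is where order five is needed, as complementation on $F_2(P_4)$ shows. The corrected fourth-moment lemma then shows that incident edges can receive disjoint images only at twin pairs, and component switches repair this. Whitney's theorem then gives $\sigma$ with $\phi\eta^{-1}=\sigma^{(2)}$ on $E(G)$. The remaining discrepancy lies in the pointwise stabilizer $\mathcal K_E$. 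The paper classifies it using four ingredients:
\begin{itemize}
\item the local two-class partitions produced by moved distance-two configurations;
\item the parallelogram identity for the distance vectors $D$;
\item the affine Cartesian-product criterion;
\item the fact that a token automorphism is determined on configurations at distance at most two.
\end{itemize}
The outcome is that $\mathcal K_E$ is trivial for prime roots and consists of the $\xi_J$ for products. The normal form follows by taking $\sigma_0$ from Theorem~\ref{thm:all} and decomposing $\phi(\sigma_0^{(2)})^{-1}\in\Aut(F_2(H))$. The trivial-intersection claim is also simpler than your fibre comparison. Coordinate exchanges fix $E(G)$ pointwise, and an automorphism of a connected graph of order at least three that fixes every edge setwise is the identity.
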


For the group formula, let $\mathcal Q_2(G)$ be the set of twin 2-cuts
from Definition~\ref{def:component-switch}. Write $\pi_0(Y)$ for the set
of connected components of a graph $Y$, and put
\[
 I_G=\{(S,C):S\in\mathcal Q_2(G),\ C\in\pi_0(G-S)\}.
\]
Let $\mathcal T_G\leq\Aut(G)$ be generated by the transpositions of these twin
pairs, and write $C_2=\mathbb Z/2\mathbb Z$.

\begin{theorem}[Automorphism groups]\label{thm:groups}
Let $G$ be connected of order at least five. The subgroup $\mathcal T_G$ is normal,
and
\[
\Aut(F_2(G))\cong
\begin{cases}
 C_2^{I_G}\rtimes(\Aut(G)/\mathcal T_G),&\mathcal Q_2(G)\ne\varnothing,\\
 C_2^{r-1}\rtimes\Aut(G),&G=G_1\square\cdots\square G_r,\ r\geq2,\\
 \Aut(G),&\text{otherwise},
\end{cases}
\]
where the $G_i$ are connected Cartesian-prime factors of order at least two.
In the first case the quotient acts on $C_2^{I_G}$ by
\[
 (\overline h\cdot\varepsilon)_{S,C}
   =\varepsilon_{h^{-1}(S),h^{-1}(C)}.
\]
In the product case, identify $C_2^{r-1}$ with
$C_2^r/\langle(1,\ldots,1)\rangle$; the action of $\Aut(G)$ permutes
the prime coordinates.
\end{theorem}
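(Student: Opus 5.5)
We derive Theorem~\ref{thm:groups} from Theorem~\ref{thm:exhaustion} applied with $H=G$. That theorem gives $\Aut(F_2(G))=\mathcal E\cdot\Aut(G)^{(2)}$, where $\mathcal E$ is the subgroup generated by the twin-cut switches (Cartesian-prime case) or by the coordinate exchanges (product case). What remains is to find the structure of $\mathcal E$, its intersection with $\Aut(G)^{(2)}$, and the conjugation action. Two facts come first. Since $\sigma\mapsto\sigma^{(2)}$ is injective by \eqref{eq:pair-map-faithfulness}, we have $\Aut(G)^{(2)}\cong\Aut(G)$. Moreover, conjugating a generator of $\mathcal E$ by $\sigma^{(2)}$ gives another generator: $\sigma^{(2)}$ sends the switch at $(S,C)$ to the switch at $(\sigma(S),\sigma(C))$, and it sends a coordinate exchange to the exchange at the permuted prime coordinate. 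Hence $\mathcal E$ is normal and the product is a semidirect product modulo the intersection.

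\emph{Twin-cut case.} First I check that a connected graph of order at least five with a twin 2-cut is Cartesian-prime. Then, for a fixed $S=\{a,b\}$, the switches at different components commute: they act on disjoint sets of pairs $ax,bx$. Switches at distinct cuts $S\neq S'$ also commute. I would show this by observing that twin 2-cuts in a graph of order at least five are either disjoint or arranged so that the affected pairs interact only through exchanges of twins; this needs a short structural lemma. Next, each switch is an involution, and the switches are independent, because a product $\prod_{(S,C)\in J}$ over a nonempty $J$ moves some pair $ax$ with $x\in C$. So the switches generate $\mathcal E\cong C_2^{I_G}$. For the intersection, the product of the switches over all components of $G-S$ equals $\tau_S^{(2)}$, where $\tau_S$ is the transposition of $a,b$. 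I will show that $\mathcal E\cap\Aut(G)^{(2)}=\mathcal T_G^{(2)}$ by examining which products of switches preserve the structure of the pair map, that is, satisfy \eqref{eq:pair-map-faithfulness}-type intersection patterns. A partial product over a proper nonempty set of components sends $ax\mapsto bx$ for some $x$ and fixes $ay$ for some other $y$, so it is not induced. Since $\mathcal T_G=\Aut(G)^{(2)}\cap\mathcal E$ (pulled back), it is normal, and $\Aut(F_2(G))=\mathcal E\rtimes\bigl(\Aut(G)/\mathcal T_G\bigr)$, where the quotient is realized through $\Aut(F_2(G))/\mathcal E$. The displayed action formula follows from the conjugation computation. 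The subtlety is that the extension must split. Because $\mathcal E$ is elementary abelian and contains $\mathcal T_G^{(2)}$, I would exhibit the decomposition directly as the abstract isomorphism asserted: identify $\Aut(F_2(G))$ with $\mathcal E\cdot\Aut(G)^{(2)}$ and define the map to $C_2^{I_G}\rtimes(\Aut(G)/\mathcal T_G)$ via a choice of signs. I expect this step to be the main obstacle, since it needs a normal-form bookkeeping to reconcile the $\mathcal T_G$ overlap.

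\emph{Product case.} Write $G=G_1\sq\cdots\sq G_r$ with prime factors; the decomposition is unique. The exchanges $e_1,\dots,e_r$ are commuting involutions, and $e_1\cdots e_r$ is the identity on pairs, since swapping every coordinate swaps the two tokens. Any proper nonempty product is nontrivial, so $\mathcal E\cong C_2^r/\langle(1,\dots,1)\rangle$. Theorem~\ref{thm:exhaustion} gives trivial intersection with $\Aut(G)^{(2)}$. Therefore $\Aut(F_2(G))\cong C_2^{r-1}\rtimes\Aut(G)$, with $\Aut(G)$ permuting the coordinates as stated.

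\emph{Remaining case.} If $G$ is Cartesian-prime with no twin 2-cut, Theorem~\ref{thm:exhaustion} gives $\phi=\sigma^{(2)}$, so $\Aut(F_2(G))\cong\Aut(G)$.
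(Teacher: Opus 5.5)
\textbf{Main gap: the argument is circular.} You start from Theorem~\ref{thm:exhaustion} with $H=G$. In the paper, that theorem is proved \emph{after} Theorem~\ref{thm:groups}, directly from the decompositions \eqref{eq:cut-aut-decomposition}, \eqref{eq:product-aut-decomposition} and \eqref{eq:rigid-aut-decomposition} established inside its proof; its trivial-intersection clause comes from \eqref{eq:kernel-induced-intersection}. The case $H=G$ you invoke is exactly the hard half of the present statement, namely that every automorphism of $F_2(G)$ is an induced automorphism times switches or exchanges.

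What must be proved instead is the factorization $\Aut(F_2(G))=\Aut(G)^{(2)}\mathcal K_E(G)\mathcal S_G$:
\begin{itemize}
\item Every $\alpha$ preserves $E(G)$ by Theorem~\ref{thm:edge-state-preservation}.
\item Theorem~\ref{thm:marked-map-normalization} gives $\theta\in\mathcal S_G$ and $\sigma\in\Aut(G)$ such that $(\sigma^{(2)})^{-1}\alpha\theta^{-1}$ fixes $E(G)$ pointwise.
\item Theorem~\ref{thm:edge-state-kernel} identifies this pointwise stabilizer with $\{\xi_J\}\cong C_2^r/\langle(1,\ldots,1)\rangle$. It is normal, being the kernel of the action on $E(G)$.
\item It meets $\Aut(G)^{(2)}$ trivially, because an automorphism of a connected root of order at least three that fixes every edge setwise is the identity.
\end{itemize}
Lemma~\ref{lem:product-twins} then gives $\mathcal K_E(G)=1$ in the cut case and $\mathcal S_G=1$ in the product case. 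In the product case, uniqueness of the decomposition is not enough. Identifying the group generated by all coordinate exchanges (for arbitrary two-factor decompositions and identifications) with $\langle\xi_{\{1\}},\ldots,\xi_{\{r\}}\rangle$ needs the prime-factorization step in that theorem.

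\textbf{Second gap: the splitting in the cut case.} You leave it open, and it does not come for free. $\Aut(G)^{(2)}$ is not a complement to $\mathcal S_G$, since it meets it in $\mathcal T_G^{(2)}\ne 1$, and an extension of $\Aut(G)/\mathcal T_G$ by $C_2^{I_G}$ need not split. The missing device is Corollary~\ref{cor:ordered-cuts}:
\begin{itemize}
\item Fix an ordering $(a_S,b_S)$ of every twin 2-cut, and let $\Aut(G;O)$ consist of the $h$ with $h(a_S)=a_{h(S)}$ and $h(b_S)=b_{h(S)}$.
\item Distinct twin 2-cuts are disjoint (Lemma~\ref{lem:cut-compatibility}). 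So each $h$ is corrected uniquely by the product of the transpositions $s_{h(S)}$ over the cuts whose orientation it reverses, and $\Aut(G)=\mathcal T_G\rtimes\Aut(G;O)$.
\item Then $\mathcal S_G\cap\Aut(G;O)^{(2)}\subseteq\mathcal T_G^{(2)}\cap\Aut(G;O)^{(2)}=1$, which gives $\Aut(F_2(G))=\mathcal S_G\rtimes\Aut(G;O)^{(2)}$ with the action from Proposition~\ref{prop:switch-module}.
\end{itemize}

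\textbf{Smaller point.} Your independence and intersection arguments only treat products of switches at a single cut. For mixed products you need the following observation: switches at other cuts fix $a$ and keep the other token in $C$. Hence $\tau_\varepsilon(ax)$ contains $b$ exactly when $\varepsilon_{S,C}=1$.
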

The three cases are mutually exclusive: Lemma~\ref{lem:product-twins}
excludes twins in a nontrivial Cartesian product of order at least five.

For connected roots of order at least five, we first prove that token
isomorphisms preserve $E(G)$ (Theorem~\ref{thm:edge-state-preservation}).
Component switches correct the incidence of its images, after which Whitney's
strong isomorphism theorem recovers the root from its line graph
(Theorem~\ref{thm:marked-map-normalization}). We then use Cartesian primality
of connected two-token graphs and the decomposition by component occupancies
to prove reconstruction for disconnected roots.

To determine the automorphism groups, we classify the pointwise stabilizer
of $E(G)$. Its action on distance-two configurations partitions the incident
edges at each vertex of $G$ into at most two classes
(Lemma~\ref{lem:edge-state-local-partition}). The distance-vector map satisfies
a parallelogram identity on the corresponding 4-cycles. Its injectivity makes
these classes Cartesian factor directions (Lemma~\ref{lem:distance-product}).
Determination from distance-two configurations then gives the kernel
classification (Theorem~\ref{thm:edge-state-kernel}). Combining this kernel
with the switch and induced subgroups proves the group formulas.
Finally, reconstruction allows us to apply those subgroup decompositions to
an isomorphism between two token graphs and obtain its normal form.

The order-five threshold in the symmetry statements is necessary.
At order four, complementation need not preserve $E(G)$, while $C_4$ has noncommuting component switches and a larger
two-token automorphism group than the product-case formula predicts.
The examples below show both phenomena.

\section{Token identities}
Connected graphs are nonempty. Write \(d_Y(x)=|N_Y(x)|\). Let \(A_Y\) be the adjacency matrix of \(Y\), with
entries \(A_Y(x,y)\in\{0,1\}\), and put
\(\codeg_Y(x,y)=|N_Y(x)\cap N_Y(y)|\).
\begin{lemma}
For distinct \(x,y\),
\begin{align}
N_{F_2(G)}(xy)&=\{xz:z\in N_G(y)\setminus\{x\}\}\mathbin{\dot\cup}\{yz:z\in N_G(x)\setminus\{y\}\},\\
d_{F_2(G)}(xy)&=d_G(x)+d_G(y)-2A_G(x,y),\qquad \codeg_{F_2(G)}(xs,ys)=\codeg_G(x,y),\label{eq:degrees}\\
\codeg_{F_2(G)}(ab,cd)&=2(A_G(a,c)A_G(b,d)+A_G(a,d)A_G(b,c)).\label{eq:disjoint}
\end{align}
The endpoints in each formula are distinct. For \(n=|V(G)|\), the token graph has \(\binom n2\) vertices and \((n-2)|E(G)|\) edges.
\end{lemma}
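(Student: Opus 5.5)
The plan is a direct computation from the adjacency rule: $P\sim Q$ in $F_2(G)$ iff $P\triangle Q$ is an edge of $G$. Every other formula in the lemma will be read off from the neighbourhood description, so I establish that first.

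\textbf{Neighbourhood.} Take $Q\ne xy$ with $|P\triangle Q|=2$ for $P=xy$. Then $Q$ shares exactly one element with $xy$.
\begin{itemize}
\item If $Q=xz$ with $z\ne y$, then $P\triangle Q=yz$, which must be an edge. So $z\in N_G(y)\setminus\{x\}$.
\item Symmetrically, if $Q=yz$, then $z\in N_G(x)\setminus\{y\}$.
\end{itemize}
The two families are disjoint: $xz=yz'$ would force $z=y$ and $z'=x$, which is excluded. This gives the stated disjoint union.

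\textbf{Degree.} Counting the neighbourhood gives $|N_G(y)\setminus\{x\}|+|N_G(x)\setminus\{y\}|$. Each term drops by one exactly when $xy\in E(G)$. Hence $d_{F_2(G)}(xy)=d_G(x)+d_G(y)-2A_G(x,y)$.

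\textbf{Codegree of $xs,ys$.} A common neighbour of $xs$ and $ys$ shares one token with each pair. I split into cases.
\begin{itemize}
\item If it contains $s$, it is $sz$ with $z\ne x,y,s$, and $z\in N_G(x)\cap N_G(y)$.
\item If it avoids $s$, it must be $xy$. This is adjacent to $xs$ iff $ys\in E(G)$, and to $ys$ iff $xs\in E(G)$, i.e.\ iff $s\in N_G(x)\cap N_G(y)$.
\item The candidates $z=x$ or $z=y$ give $sx$ or $sy$, which are the vertices themselves and are excluded.
\end{itemize}
So the map sending $z\mapsto sz$ for $z\neq s$ and $s\mapsto xy$ is a bijection from $N_G(x)\cap N_G(y)$ onto the common neighbourhood. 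This is the step needing the most care, since the exceptional vertex $s$ of the common neighbourhood must be matched to $xy$. It yields $\codeg_G(x,y)$.

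\textbf{Codegree of disjoint pairs $ab,cd$.} A common neighbour meets each of $ab$ and $cd$ in exactly one token. It therefore has the form $\{a\text{ or }b\}\cup\{c\text{ or }d\}$.
\begin{itemize}
\item $ac$ is adjacent to $ab$ iff $bc\in E$, and to $cd$ iff $ad\in E$. It contributes $A(a,d)A(b,c)$.
\item $bd$ also contributes $A(a,d)A(b,c)$.
\item $ad$ and $bc$ each contribute $A(a,c)A(b,d)$.
\end{itemize}
Summing gives $2(A(a,c)A(b,d)+A(a,d)A(b,c))$.

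\textbf{Counts.} The vertex count $\binom n2$ is immediate. For edges, note that each edge $P\sim Q$ satisfies $P\triangle Q=e\in E(G)$ and $P\cap Q=\{w\}$ with $w\notin e$. Conversely, each pair $(e,w)$ with $w\notin e$ determines the edge $\{u w,vw\}$ for $e=uv$. This is a bijection, so there are $(n-2)|E(G)|$ edges. Alternatively, summing the degree formula gives $\tfrac12\bigl[(n-1)\cdot 2|E|-2|E|\bigr]\cdot$ adjusted, which confirms the same total.
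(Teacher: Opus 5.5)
Your proposal is correct, and every case checks out, including the delicate point you flagged: the common neighbour $xy$ of $xs,ys$ corresponds exactly to $s$ itself lying in $N_G(x)\cap N_G(y)$.

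The paper does not prove this lemma. It cites the token-graph literature (Section~2 of Fabila-Monroy et al., Observations~2.1--2.2 of Jacob, Goddard and Laskar, and Lemma~5 of the untwisting paper). It adds only the remark that these derivations use local token moves, and hence also hold for disconnected roots.

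Your direct case analysis from the rule ``$P\sim Q$ iff $P\triangle Q\in E(G)$'' is exactly such a local derivation. It is self-contained, and it makes visible that connectedness is never used. The paper relies on that, e.g.\ when it applies the edge count $(n-2)|E(G)|$ to possibly disconnected roots in the proof of the reconstruction theorem. The citation buys brevity; your version buys a self-contained check.

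The only blemish is your closing aside. Delete the stray ``$\cdot$ adjusted'', since $\tfrac12\bigl[(n-1)\cdot2|E(G)|-2|E(G)|\bigr]$ already equals $(n-2)|E(G)|$. In any case, your bijection $(uv,w)\mapsto\{uw,vw\}$ settles the count on its own.
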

The neighbourhood, degree and counting formulae are the two-token cases of
\cite[Section~2]{tokens}; see also
\cite[Observations~2.1--2.2]{double-reconstruction}.
The common-neighbour identities are given in
\cite[Lemma~5(i)--(ii)]{untwisting}. Their derivation uses only local token
moves and therefore also applies to disconnected root graphs.

Distances are shortest-path distances, with value \(+\infty\) when
no path exists. For a nonempty vertex set \(W\), write
\(\dist_Y(z,W)=\min_{w\in W}\dist_Y(z,w)\).

\begin{lemma}\label{lem:connectivity}
Let \(G\) be a finite connected graph of order at least three, and put
\(X=F_2(G)\). For \(t\in V(G)\),
\begin{equation}
 \dist_X(xy,\delta_G(t))=\dist_G(x,t)+\dist_G(y,t)-1
 \qquad(x\ne y).
\end{equation}
In particular, \(X\) is connected. For any finite graph of order at least
three, connectedness of its two-token graph also implies connectedness of
the root graph.
\end{lemma}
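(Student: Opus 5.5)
We prove the distance formula by establishing matching lower and upper bounds, and then derive both connectivity statements from it. Fix $t$ and write $f(xy)=\dist_G(x,t)+\dist_G(y,t)-1$; since $x\ne y$, at most one of them equals $t$, so $f\ge 0$. Moreover, $f(xy)=0$ holds exactly when one endpoint is $t$ and the other is a neighbour of $t$, that is, when $xy\in\delta_G(t)$. Hence the formula holds on $\delta_G(t)$, and we only need to show that $f$ is $1$-Lipschitz along edges of $X$ and that every configuration outside $\delta_G(t)$ has a neighbour with smaller $f$.

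For the lower bound, take an edge $xy\sim xz$ of $X$, where $yz\in E(G)$. Then $|\dist_G(y,t)-\dist_G(z,t)|\le 1$, so $|f(xy)-f(xz)|\le1$. Along any path in $X$ from $xy$ to $\delta_G(t)$, the value of $f$ therefore drops by at most one per step and ends at $0$. This gives $\dist_X(xy,\delta_G(t))\ge f(xy)$.

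For the upper bound, we induct on $f(xy)$. Suppose $xy\notin\delta_G(t)$. Assume without loss of generality that $\dist_G(y,t)\ge\dist_G(x,t)$, and in addition that $y\ne t$ (if $y=t$, then $x\ne t$ and we swap the roles). Since $G$ is connected, $y$ has a neighbour $z$ with $\dist_G(z,t)=\dist_G(y,t)-1$. If $z\ne x$, moving the token from $y$ to $z$ gives the configuration $xz$ with $f(xz)=f(xy)-1$. If $z=x$, then $\dist_G(x,t)<\dist_G(y,t)$, so $x\ne y$ forces $\dist_G(x,t)\ge0$. If $x=t$, then $y$ is adjacent to $t$, so $xy\in\delta_G(t)$, a contradiction. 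Otherwise $x\ne t$, and we move $x$ instead: pick a neighbour $w$ of $x$ closer to $t$. Then $w\ne y$, because $\dist_G(w,t)<\dist_G(x,t)<\dist_G(y,t)$. The configuration $wy$ has $f$ smaller by one. This collision case, where the intended move would land on the occupied vertex, is the only real subtlety in the whole argument.

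Connectivity of $X$ follows immediately: every configuration is at finite distance from $\delta_G(t)$. Since $n\ge3$ and $G$ is connected, $\delta_G(t)\neq\varnothing$. The configurations $tx$ and $ty$ in $\delta_G(t)$ are joined by a path in $X$ obtained by moving the second token along a path from $x$ to $y$ in $G-t$. If no such path exists, we route through $t$'s other neighbours instead. It is simpler, though, to fix $t$ and use the fact that any two stars $\delta_G(t)$, $\delta_G(t')$ with $tt'\in E(G)$ share the configuration $tt'$, so that all stars are joined. Within a single star, $tx$ and $ty$ are connected because each of them lies at finite distance from $\delta_G(x)$, and this suffices. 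For the converse, suppose $G$ is disconnected with a component $C$. Every move of a single token along an edge of $G$ preserves the number of tokens in $C$. This number takes at least two distinct values when $n\ge3$: choose a configuration with one token in $C$ and one outside, and another with both tokens on the same side, which is possible since one side has at least two vertices. Hence $X$ is disconnected.
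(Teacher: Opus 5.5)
Your proof of the distance formula is correct and follows the paper's line: the same potential-function lower bound, and an explicit descent for the upper bound. The paper schedules the descent globally: it walks the nearer token along a shortest path to $t$, then moves the other token to a neighbour of $t$. You instead run a one-step greedy induction that moves the farther token and handle the collision case separately. Both work. Your converse, using the invariant ``number of tokens in $C$'', is also fine. One small point: your extra assumption $y\ne t$ is automatic, since $\dist_G(y,t)\ge\dist_G(x,t)$ and $x\ne y$ already force it.

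The gap is in the deduction that $X$ is connected. The formula shows that every configuration reaches $\delta_G(t)$. What remains is to show that $\delta_G(t)$ lies in a single component of $X$, and none of your attempts establishes this.
\begin{itemize}
\item Moving the second token inside $G-t$ fails when $t$ separates $x$ from $y$. This already happens for $G=P_3$ with $t$ the middle vertex.
\item ``Route through $t$'s other neighbours'' is never specified.
\item Chaining stars through the shared configuration $tt'$ only helps once each individual star is known to lie in one component.
\item Your final justification is circular. The fact that $ty$ lies at finite distance from $\delta_G(x)$ connects $ty$ only to \emph{some} configuration of $\delta_G(x)$. Concluding that this configuration is connected to $tx$ requires $\delta_G(x)$ to lie in one component, which is the very claim you are proving, now at $x$.
\end{itemize}
The missing step is a two-move path. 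For distinct $x,y\in N_G(t)$, one has $tx\sim xy\sim ty$ in $X$: move the token at $t$ to $y$, then move the token at $x$ to $t$. This works whether or not $xy\in E(G)$, and it is exactly how the paper concludes.
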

\begin{proof}
The function \(h_t(xy)=\dist_G(x,t)+\dist_G(y,t)\) changes by at most one
in a token move and equals one on \(\delta_G(t)\). This gives the lower bound.
To attain it, move a nearer token along a shortest path to \(t\).
Its distance strictly decreases, so it never meets the other token.
Then move the other token towards \(t\), stopping at a neighbour.
The number of moves is \(h_t(xy)-1\), also when one token starts at \(t\).
Every configuration therefore reaches \(\delta_G(t)\), which is nonempty.
For distinct \(s,r\in N_G(t)\), the legal token path
\(ts\sim sr\sim tr\) joins these two configurations in the same component.
Thus \(\delta_G(t)\) lies in one component, also when it is a singleton,
and \(X\) is connected.

Conversely, choose \(a,b\) in different components of the root graph and a third
vertex \(c\). The configurations \(ac,bc\) have different component occupancies,
which token moves preserve, so the token graph is disconnected.
\end{proof}

The Cartesian product \(A\square B\) has vertex set \(V(A)\times V(B)\); adjacent vertices agree in one coordinate and are adjacent in the other factor.

\section{Recognition of base edges}

\begin{lemma}\label{lem:token-square-types}
If both opposite pairs of a 4-cycle in \(F_2(G)\) have codegree two,
then either its four configurations share a vertex of \(G\) or its cyclic order is
\((xr,xs,ys,yr)\), with \(x,y,r,s\) distinct and \(xy,rs\in E(G)\).
\end{lemma}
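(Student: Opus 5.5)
We classify 4-cycles \((P_0,P_1,P_2,P_3)\) in \(X=F_2(G)\) according to whether consecutive moves share a stationary token. An edge \(P_iP_{i+1}\) of \(X\) is a token move: \(P_i=uw\), \(P_{i+1}=vw\) with \(uv\in E(G)\), and \(w\) is the fixed token of that move. The plan is to look at the opposite pair \(P_0,P_2\), which have two common neighbours \(P_1,P_3\), and split according to whether \(|P_0\cap P_2|=1\) or \(P_0\cap P_2=\varnothing\). The same split is then applied to \(P_1,P_3\). The common-neighbour identities \eqref{eq:degrees} and \eqref{eq:disjoint} say exactly what the two common neighbours look like in each case.

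\emph{Case \(P_0\cap P_2=\varnothing\).} Write \(P_0=ab\) and \(P_2=cd\). A common neighbour of \(ab\) and \(cd\) must keep one of \(a,b\) and one of \(c,d\). So it is one of \(ac,ad,bc,bd\), with the two needed adjacencies; this is the content of \eqref{eq:disjoint}. For instance, \(ac\) is a common neighbour iff \(bc\)-move and \(ad\)-move are edges, i.e.\ \(A(b,c)A(a,d)\), wait—more precisely \(ac\sim ab\) needs \(bc\in E(G)\), and \(ac\sim cd\) needs \(ad\in E(G)\).

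The two common neighbours \(P_1,P_3\) are distinct. If they are complementary pairs inside \(\{a,b,c,d\}\), say \(\{ac,bd\}\) or \(\{ad,bc\}\), then the cycle is \((ab,ac,cd,bd)\) up to relabelling. Renaming \(x=a\), \(y=d\), \(r=b\), \(s=c\) gives the order \((xr,xs,ys,yr)\). Since every step is a move, the required edges \(xy=ad\) and \(rs=bc\) lie in \(E(G)\). If instead \(P_1\) and \(P_3\) share a vertex, say \(ac\) and \(ad\), then each would need both \(ab\to ac\) and \(ac\to cd\) type moves. The count in \eqref{eq:disjoint} shows the common neighbours come in the complementary product pairs \(\{ac,bd\}\) and \(\{ad,bc\}\), so a pair such as \(ac,ad\) only occurs when all four products are nonzero. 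In that situation I will use the second hypothesis, that \(\codeg(P_1,P_3)=2\), to rule it out. With \(P_1=ac\), \(P_3=ad\), we have \(|P_1\cap P_3|=1\), so by \eqref{eq:degrees} the codegree is \(\codeg_G(c,d)\), which is at least \(2\) here. Its common neighbours include \(ab\), \(cd\), and also every \(az\) with \(z\in N(c)\cap N(d)\setminus\{a\}\). Since \(b\) is such a \(z\), \(ab\) is already counted; then \(a\in N(c)\cap N(d)\) gives the configuration \(cd\) itself. I expect to exhaust these options directly.

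\emph{Case \(|P_0\cap P_2|=1\).} Write \(P_0=xs\) and \(P_2=ys\). The common neighbours are \(zs\) with \(z\in N(x)\cap N(y)\), and possibly \(xy\) if \(s\in N(x)\cap N(y)\). If both \(P_1\) and \(P_3\) contain \(s\), all four configurations share \(s\), which gives the first alternative. Otherwise one of them is \(xy\) and the other is \(zs\), with \(s,z\) both common neighbours of \(x\) and \(y\). The cycle is then \((xs,xy,ys,zs)\). Now \(P_1=xy\) and \(P_3=zs\) are disjoint, so the first case applies to this opposite pair instead. That gives the form \((xr,xs,ys,yr)\) after relabelling, or else a contradiction, since \(xs\) and \(ys\) form a complementary matching of \(\{x,y,z,s\}\)-pairs only in the right pattern. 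Concretely, the cycle \((xs,xy,ys,zs)\) already has the required shape with the roles reassigned.

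The main obstacle is the bookkeeping in the disjoint case: showing that the two common neighbours must be complementary pairs, or else the configurations share a vertex. I would first enumerate the four candidate configurations via \eqref{eq:disjoint}. Then I would use the codegree-two hypothesis on the other diagonal, via \eqref{eq:degrees}, to eliminate the mixed choices.
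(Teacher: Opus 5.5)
There is a genuine gap, in the case $|P_0\cap P_2|=1$. You correctly reach the cycle $(xs,xy,ys,zs)$ with $s,z\in N_G(x)\cap N_G(y)$. You then assert that it "already has the required shape with the roles reassigned." It does not. In $(x'r',x's',y's',y'r')$ with distinct letters, both opposite pairs are disjoint, whereas here $xs$ and $ys$ share $s$. The four configurations also do not share a vertex, since $xy\cap zs=\varnothing$. So this cycle is of neither type, and if it could occur it would be a counterexample to the lemma.

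Your own appeal to the disjoint case, applied to the pair $xy,zs$, gives a contradiction rather than the rectangle. The common neighbours $xs,ys$ are not a complementary pair, so both cross matchings must be present. Concretely, $xz,ys,xs,yz$ are all edges, so \eqref{eq:disjoint} gives
\[
\codeg_{F_2(G)}(xy,zs)=2\bigl(A_G(x,z)A_G(y,s)+A_G(x,s)A_G(y,z)\bigr)=4\ne2,
\]
contradicting the hypothesis on that diagonal. This exclusion is exactly the key step in the paper's proof. With it, the shared-vertex case yields only the first alternative.

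Your disjoint case is essentially the paper's argument and is correct in substance. By \eqref{eq:disjoint}, $\codeg(ab,cd)=2$ means exactly one cross matching is present, so the two common neighbours are automatically a complementary pair. Your renaming into $(xr,xs,ys,yr)$ is right. The subcase $P_1=ac$, $P_3=ad$ requires all four products to be nonzero, as you note. It is therefore excluded at once by $\codeg(P_0,P_2)=2$ itself.

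Your proposed use of the second hypothesis there would fail. We have $\codeg_{F_2(G)}(ac,ad)=\codeg_G(c,d)$, which equals two whenever $N_G(c)\cap N_G(d)=\{a,b\}$. That subcase is the same mixed cycle as above, seen from the other diagonal.
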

\begin{proof}
Choose opposite configurations \(P,Q\). If \(P=rx,Q=ry\), a common
neighbour has the form \(rz\), where \(z\ne r\) is a common neighbour of
\(x,y\), or is \(xy\) if \(r\) is adjacent to both.
The other corners either both contain \(r\), or are \(xy,rz\).
In the latter case both \(r,z\) neighbour both \(x,y\), so
\eqref{eq:disjoint} gives the opposite pair \(xy,rz\) codegree four,
a contradiction. Thus all four configurations contain \(r\).

If \(P=ab,Q=cd\) are disjoint, \eqref{eq:disjoint} shows that exactly
one cross matching is present. Relabel so that \(ac,bd\) are edges.
The common token neighbours are then \(bc,ad\), giving the cycle
\((ab,bc,cd,ad)\). Taking \((x,y,r,s)=(b,d,a,c)\) gives the stated form.
\end{proof}
\begin{theorem}\label{thm:edge-state-preservation}
Let \(H,G\) be finite connected simple graphs of common order \(n\ge5\).
Every isomorphism \(p:F_2(H)\to F_2(G)\) satisfies \(p(E(H))=E(G)\).
\end{theorem}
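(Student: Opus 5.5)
The plan is to find an intrinsic description of the edge configurations \(E(G)\subseteq V(F_2(G))\) that uses only the graph structure of \(F_2(G)\), and to check that this description is invariant under isomorphism. The natural handle is Lemma~\ref{lem:connectivity}. Since \(\dist_X(xy,\delta_G(t))=h_t(xy)-1\), and \(h_t(xy)=1\) holds exactly on \(\delta_G(t)\), we have \(E(G)=\bigcup_t\delta_G(t)\). I would therefore try to prove the stronger statement that \(p\) maps each star \(\delta_H(t)\) onto some star \(\delta_G(t')\). Note that the map \(t\mapsto t'\) need not be induced by an isomorphism, because twin-cut switches preserve \(E\) without preserving the fibres \(\{tz\}\). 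So only the stars, not the fibres, should be claimed to be preserved.

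The first step is local and uses Lemma~\ref{lem:token-square-types}. Each token edge \(PQ\) carries a moved root edge \(P\triangle Q\) and a stationary vertex \(P\cap Q\). The disjoint-type squares \((xr,xs,ys,yr)\) are exactly those whose opposite token edges move the same root edge. The shared-vertex squares are those in which one token stays fixed throughout. I would aim to separate these two square types intrinsically using codegrees:
\begin{itemize}
\item by \eqref{eq:disjoint}, the diagonals of a disjoint-type square are disjoint configurations of codegree exactly two;
\item by \eqref{eq:degrees}, a diagonal pair \(xs,ys\) of a shared-vertex square has codegree \(\codeg_G(x,y)\).
\end{itemize}
Triangles of \(F_2(G)\) are useful in the same spirit. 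A triple \(xy,xz,yz\) is a triangle exactly when \(xyz\) is a triangle of \(G\), so all three of its corners lie in \(E(G)\). Hence every configuration lying on a token triangle is already recognised as an edge, and the remaining difficulty concentrates on edges lying in no triangle.

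The second step characterises stars. For a vertex set \(W\), consider \(f_W=\dist_X(\cdot,W)\). When \(W=\delta_G(t)\), we have \(f_W=h_t-1\) with \(h_t(xy)=\dist_G(x,t)+\dist_G(y,t)\). Such a function has a rigid profile: every configuration with \(f_W\ge1\) has a neighbour with smaller value, and the level sets are controlled by the two independent token distances. I would try to show that the stars are exactly the minimal nonempty sets \(W\) with the following two properties:
\begin{enumerate}
\item \(f_W\) changes by exactly one along every token edge leaving \(W\);
\item \(\max f_W\) and the level-set sizes match the sum-of-two-distances pattern.
\end{enumerate}
For the matching of level-set sizes, I would count with \eqref{eq:degrees}. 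Using \(n\ge5\), I would rule out fake candidates such as \(\{ab\}\cup\{\ldots\}\) centred on non-edges; the first thing to try there is to exhibit two configurations at \(f_W=1\) whose common neighbours violate the codegree pattern forced by a genuine star. Once stars are intrinsic, \(p(\delta_H(t))\) is a star of \(G\) for every \(t\), and taking unions gives \(p(E(H))\subseteq E(G)\). Equality then follows from \(|E(H)|=|E(G)|\), which comes from the edge count \((n-2)|E|\) of \(F_2\).

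I expect the main obstacle to be the second step. Proving that no non-star set mimics the distance profile of a star requires genuine use of \(n\ge5\), since at order four complementation already breaks the statement. If the global distance characterisation resists, my fallback is a purely local test on a single configuration \(P\): decide whether \(P\in E(G)\) from the isomorphism type of the subgraph induced on \(N[P]\) together with codegrees to second neighbours. This could work because for \(xy\in E(G)\) every neighbour \(xz\) of \(P\) has a partner \(yz\) at distance at most two, whereas this partner structure differs when \(xy\notin E(G)\). The delicate cases are sparse triangle-free graphs such as paths and cycles, which I would handle by direct inspection.
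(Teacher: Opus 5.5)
\paragraph{The central claim is false.} You propose to show that \(p\) maps every star \(\delta_H(t)\) onto a star of \(G\), via an intrinsic characterisation of stars in \(F_2(G)\). Component switches refute this: they are automorphisms of \(F_2(G)\) that preserve \(E(G)\) but send some stars to non-stars.

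Take \(G=K_{2,3}\), with parts \(\{a,b\}\) and \(\{x,y,z\}\). In \(F_2(G)\) the configurations \(ax\) and \(bx\) have the same neighbourhood \(\{ab,xy,xz\}\), so exchanging them is an automorphism \(\tau\); it is \(\tau_{\{a,b\},\{x\}}\). It sends \(\delta_G(a)=\{ax,ay,az\}\) to \(\{bx,ay,az\}\). This set contains the disjoint pair \(bx,ay\), so it is not a star.

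Any property defined from the graph \(F_2(G)\) alone is \(\tau\)-invariant. Hence \(\{bx,ay,az\}\) has exactly the same distance profile \(\dist_X(\cdot,W)\) as \(\delta_G(a)\), and the stars are not an intrinsic family. Your remark that switches preserve stars but not fibres is therefore mistaken. Corollary~\ref{cor:component-switches} gives invariance only for stars at vertices outside the cut, and the paper's later normalization theorem exists precisely to repair this failure.

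\paragraph{The proposed conditions give no handle.} At best you could characterise an \(\Aut\)-invariant family of star-like sets and then show that each lies in \(E(G)\). That containment is the entire theorem. Condition 1 is automatic for token edges leaving \(W\), since the value jumps from \(0\) to \(1\). Condition 2 refers to the root distances \(\dist_G(\cdot,t)\), which are not intrinsic data.

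\paragraph{The triangle remark is also wrong.} For a root triangle \(xyz\) and any \(s\notin\{x,y,z\}\), the configurations \(sx,sy,sz\) form a token triangle. These configurations are in general non-edges; they are the mixed triangles of the paper's \(K_3\sqcup2K_1\) example. The local fallback test is not specified, and you give no argument that it separates edges from non-edges.

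\paragraph{What is missing: a global argument in which \(n\ge5\) enters quantitatively.} The paper argues by contradiction from a single edge \(UV\) with \(p(UV)=ab\notin E(G)\).
\begin{enumerate}
\item Every neighbour of \(ab\) lies in the image of the induced subgraph \(K_2\sq J\) on \(\{Ui,Vi\}\), where \(J=H-\{U,V\}\).
\item Lemma~\ref{lem:token-square-types}, together with the parity of codegrees of disjoint configurations, forces each image pair \(p(Ui),p(Vi)\) to share a vertex of \(\{a,b\}\). This shared vertex is constant on each component of \(J\), which is where connectedness of \(H\) is used.
\item The \(2(n-2)\) images therefore exhaust the configurations meeting \(\{a,b\}\) in exactly one vertex.
\item Comparing induced edge counts gives \((n-2)+2(m-q-1)=2(m-q)\), that is, \(n=4\).
\end{enumerate}
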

\begin{proof}
Fix an edge \(UV\in E(H)\). Suppose, for a contradiction,
that \(z=p(UV)=ab\) is a nonedge of \(G\). Put
\(I=V(H)\setminus\{U,V\}\), \(J=H[I]\), and \(X=F_2(G)\). For \(i\in I\), write
\[
 \ell_i=p(Ui),\qquad \rho_i=p(Vi),\qquad
 \{x_i\}=\ell_i\cap\rho_i,\qquad f_i=\ell_i\triangle\rho_i.
\]
The intersection has one element because \(Ui\sim Vi\), and \(f_i\)
is an edge of \(G\). The data \((f_i,x_i)\) determine the unordered pair of configurations.
Distinct indices therefore have distinct data, since \(p\) is injective.

The configurations \(\{\ell_i,\rho_i:i\in I\}\) induce
\(K_2\square J\). Indeed, the vertical edge \(Ui\sim Vi\) uses \(UV\);
within each family \(Ui\) or \(Vi\), adjacency agrees with adjacency in \(J\).
The configurations \(Ui,Vj\) are disjoint when \(i\ne j\), so there are no
other edges. Thus interchanging each pair defines an automorphism \(\tau\)
of this induced subgraph. Every neighbour of \(UV\) contains
exactly one of \(U,V\), so every neighbour of \(z\) lies in this induced subgraph.
The \emph{boundary} of \(J\) consists of indices neighbouring \(U\) or \(V\)
in \(H\), equivalently those whose pair contains a neighbour of \(z\);
the remaining indices are \emph{interior}.

\emph{Step 1: boundary indices satisfy \(x_i\in\{a,b\}\).}
If this fails, a neighbour of \(ab\) and its partner under \(\tau\) can be named
\(ax,cx\), where \(a,b,c,x\) are distinct and \(bx,ac\in E(G)\).
The configuration \(bc\) is also adjacent to \(ab\), hence lies in the induced subgraph.
Since \(bc\sim cx\), the involution gives \(\tau(bc)\sim\tau(cx)=ax\).
Also \(bc\sim\tau(bc)\), because \(\tau\) interchanges adjacent configurations.
The disjoint configurations \(ax,bc\) have precisely the common neighbours
\(ab,cx\): the matching \(ac,bx\) exists, while the alternative needs
the absent edge \(ab\). Neither can be \(\tau(bc)\). The configuration \(ab=z\) lies outside the
induced subgraph, and \(\tau(bc)=cx=\tau(ax)\) would imply \(bc=ax\)
by injectivity. This is a contradiction.

\emph{Step 2: the second entry is constant on each component of \(J\).}
Suppose \(ij\in E(J)\). If both are boundary indices, their entries \(x_i,x_j\)
must agree. Otherwise one pair consists of configurations \(ar,as\)
and the other of configurations \(bt,bu\), with \(r,s,t,u\notin\{a,b\}\). A configuration in the first pair cannot be adjacent to a configuration
in the second: if they intersect, the required move is \(ab\).

If at least one index is interior, both opposite pairs in the
4-cycle \(Ui,Vi,Vj,Uj\) in \(F_2(H)\) have codegree two. For example,
\[
 \codeg_{F_2(H)}(Ui,Vj)
 =2\bigl(A_H(U,V)A_H(i,j)+A_H(U,j)A_H(V,i)\bigr)=2;
\]
the second product vanishes because one index has no neighbour in
\(\{U,V\}\). The other opposite pair is treated in the same way.
Lemma~\ref{lem:token-square-types} now applies to the image 4-cycle.
If all four configurations share a vertex, then \(x_i=x_j\).
In this case \(f_i,f_j\) are disjoint, since a shared endpoint would repeat
a configuration. In the rectangle case, \(f_i=f_j\) and \(x_ix_j\in E(G)\).
In the boundary case considered above, \(f_i,f_j\) are disjoint by the
same injectivity argument.
Both entries cannot agree, by injectivity.
Call the first type \emph{\(x\)-preserving} and the latter \emph{\(x\)-changing}.

No \(x\)-changing edge meets the boundary. Consider a boundary index;
exchanging \(a,b\) if necessary, write its data \((rs,a)\). One configuration is adjacent
to \(ab\); name that endpoint \(r\), so \(br\in E(G)\).
An \(x\)-changing edge to an index with data \((rs,x)\) gives \(ax\in E(G)\).
The vertex \(x\) is neither \(a\) nor \(b\), since \(ab\) is absent.
Since \(bx\) is a neighbour of \(ab\), it lies in the induced subgraph.
Its index is on the boundary, so Step~1 gives second entry \(b\);
its edge label contains \(x\).
But \(bx\sim xr\), because \(br\) is an edge. These configurations belong to
different pairs, so their indices are adjacent in \(J\).
Their data differ in both entries: \(b\ne x\), and a label
containing \(x\) is not \(rs\). This contradicts the transition dichotomy.

We next show that mixed transitions commute. If \(ij,jk\in E(J)\)
have data \((f,x),(g,x),(g,y)\), there is a unique common neighbour
\(h\) of \(i,k\) with data \((f,y)\). Consequently \(ih\) is
\(x\)-changing and \(hk\) preserves the second entry.
Indeed, the dichotomy gives \(f\cap g=\varnothing\) and \(xy\in E(G)\).
Apply the transition dichotomy to both edges through any common neighbour
of \(i,k\). Two preserving steps would require \(x=y\), and two changing
steps would require \(f=g\), both impossible. The remaining possibilities
give data \((g,x)\) or \((f,y)\). Injectivity of the data and the existing neighbour
\(j\) therefore give \(1\le \codeg_J(i,k)\le2\).

Since \(xy\) is an edge and \(ab\) is not, at least one of \(x,y\)
lies outside \(\{a,b\}\). The corresponding index is interior by
Step~1, so neither \(U\) nor \(V\) is a common neighbour of \(i,k\);
hence \(\codeg_H(i,k)=\codeg_J(i,k)\).
At most one configuration of the \(i\)-pair contains \(y\).
Together with \(f\cap g=\varnothing\) and \(x\notin g\), this makes
at least one of \((\ell_i,\ell_k)\), \((\rho_i,\rho_k)\) disjoint.
Its codegree is even by \eqref{eq:disjoint}. The corresponding configurations
in \(F_2(H)\) are \(Ui,Uk\) or \(Vi,Vk\), whose codegree is
\(\codeg_H(i,k)\) by \eqref{eq:degrees}.
It follows that \(\codeg_J(i,k)=2\). By the dichotomy, the additional
neighbour has the required data and edge types.

Every component of \(J\) meets the boundary, since \(H\) is connected.
Take a path from a boundary vertex to any vertex of that component.
If it contains an \(x\)-changing edge, interchange the first such edge
with each preceding \(x\)-preserving step, using the mixed completion.
Each interchange preserves the endpoints and moves the changing step one
position to the left. The preceding steps still preserve \(x\), so this
can be repeated until the changing edge meets the boundary, a contradiction.
Hence the second entry is constant on the component and
belongs to \(\{a,b\}\) by Step~1.

\emph{Step 3: compare edge counts.}
Each selected configuration contains \(a\) or \(b\), and none equals the configuration
\(ab=p(UV)\), by injectivity of \(p\). The \(2(n-2)\) distinct selected
configurations therefore exhaust \(\{at,bt:t\notin\{a,b\}\}\).

Put \(m=|E(H)|=|E(G)|\); equality follows from the token edge counts
and \(n-2>0\). The degrees of \(UV\) and \(ab\) also agree, so put
\(q=d_{F_2(H)}(UV)=d_X(ab)\).
By \eqref{eq:degrees}, the edges of \(H\) incident with \(U\) or \(V\)
number \(q+1\): the edge \(UV\) is counted once in their union.
The edges of \(G\) incident with \(a\) or \(b\) number \(q\), since \(ab\)
is absent. Consequently
\[
 |E(J)|=m-q-1,\qquad |E(G-\{a,b\})|=m-q.
\]
The selected configurations induce \(K_2\square J\) in the source and two
disjoint copies of \(G-\{a,b\}\) in the target. Equality of their edge
counts gives
\[
 (n-2)+2(m-q-1)=2(m-q),
\]
forcing \(n=4\), a contradiction. Thus \(p(E(H))\subseteq E(G)\);
applying the same argument to \(p^{-1}\) gives equality.
\end{proof}
The order threshold is necessary. Complementation on \(F_2(P_4)\) sends
the configuration corresponding to the middle edge to the nonedge formed
by its leaves; this agrees with the factor \(n-4\) in the counting argument.

Connectedness is also necessary: for \(G=K_3\sqcup2K_1\),
\(F_2(G)\cong3K_3\sqcup K_1\). Interchanging the internal triangle with
a mixed triangle does not preserve \(E(G)\).

\section{Twin pairs and component switches}
Distinct vertices \(a,b\) are twins when \(N_G(a)\setminus\{b\}=N_G(b)\setminus\{a\}\).
\begin{definition}\label{def:component-switch}
Let \(G\) be connected. A pair \(S=\{a,b\}\) is a \emph{twin 2-cut}
when its vertices are twins and \(G-S\) is disconnected. Let \(\mathcal Q_2(G)\) be the set of unordered twin 2-cuts. For \(C\in\pi_0(G-S)\), the \emph{component switch} \(\tau_{S,C}\) exchanges \(ax,bx\) for \(x\in V(C)\) and fixes every other token configuration.
\end{definition}
These twin 2-cuts are the ``$2$-cuts with the same neighbours'' of
\cite{twincuts}.
For two tokens the component-occupancy profiles in \cite[Section~3]{twincuts}
are the unit vectors, one for each component of \(G-S\); their individual
flips are precisely \(\tau_{S,C}\).

\begin{corollary}\label{cor:component-switches}
Let $G$ be connected, $S=\{a,b\}\in\mathcal Q_2(G)$ and
$C\in\pi_0(G-S)$. The component switch $\tau_{S,C}$ is a token
automorphism preserving $E(G)$. For $u\in V(G)\setminus S$,
\begin{equation}\label{eq:outside-star-invariance}
 \tau_{S,C}(\delta_G(u))=\delta_G(u).
\end{equation}
For an isomorphism $\sigma:G\to H$,
\begin{equation}\label{eq:switch-conjugation}
 \sigma^{(2)}\tau_{S,C}(\sigma^{(2)})^{-1}
 =\tau_{\sigma(S),\sigma(C)}.
\end{equation}
\end{corollary}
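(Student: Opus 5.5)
We verify each claim directly from the definition of $\tau=\tau_{S,C}$. Throughout, $S=\{a,b\}$ with $N_G(a)\setminus\{b\}=N_G(b)\setminus\{a\}=:N$. Every vertex of $C$ adjacent to $S$ lies in $N$. No vertex of $C$ is adjacent to a vertex of another component $D$ of $G-S$. First, $\tau$ is an involutive bijection of $\binom{V(G)}2$: it swaps $ax\leftrightarrow bx$ for $x\in V(C)$ and fixes everything else, including $ab$. It preserves $E(G)$, because $ax\in E(G)$ iff $x\in N$ iff $bx\in E(G)$, and $ab$ is fixed.

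\textbf{Adjacency.} Since $\tau$ is an involution, it suffices to show $P\sim Q\Rightarrow\tau(P)\sim\tau(Q)$. Write $P=ux$, $Q=uy$ with $xy\in E(G)$ and $u\notin\{x,y\}$. If neither $P$ nor $Q$ is moved, there is nothing to prove. We then split by whether $u\in S$.

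\emph{Case $u\in S$, say $u=a$.} Here $\tau$ moves $ax$ exactly when $x\in V(C)$.
\begin{itemize}
\item If $x,y\in V(C)$, both images contain $b$ and $b x\sim by$.
\item If $x\in V(C)$ and $y\notin V(C)$, then $y\in S$, since $C$ is a component. With $y=b$, the pair $\{ax,ab\}$ maps to $\{bx,ab\}$, which are adjacent via $xa\in E(G)$, since $x\in N$ because $xb\in E(G)$.
\end{itemize}

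\emph{Case $u\notin S$.} A moved configuration has the form $ax$ or $bx$ with $x\in V(C)$. Then one of $P,Q$ is $u s$ with $s\in S$ and $u\in V(C)$, and the other is $ut$.
\begin{itemize}
\item If $t\in S$, the pair $\{ua,ub\}$ is swapped, so adjacency is preserved.
\item If $t\notin S$, then $t\in V(C)\cup\{\ldots\}$ is adjacent to $s$ and $ut$ is fixed. The image is $us'$, where $s'$ is the twin of $s$, and $us'\sim ut$ because $ts'\in E(G)$ by the twin property.
\end{itemize}

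These cases are routine but should be organized carefully. This case analysis is the main, though modest, obstacle.

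\textbf{Star invariance \eqref{eq:outside-star-invariance}.} Take $u\notin S$. The only configurations of $\delta_G(u)$ that $\tau$ moves are $ua,ub$ with $u\in V(C)$. In that case $u\in N$, so both are edges and they are swapped. If $u\notin V(C)$, nothing in $\delta_G(u)$ moves. Either way $\tau(\delta_G(u))=\delta_G(u)$.

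\textbf{Conjugation \eqref{eq:switch-conjugation}.} An isomorphism $\sigma$ preserves the twin relation and disconnection, so $\sigma(S)\in\mathcal Q_2(H)$ and $\sigma(C)\in\pi_0(H-\sigma(S))$. Now evaluate the conjugate on $\sigma^{(2)}(P)$. For $P=ax$ with $x\in V(C)$, it sends $\sigma(a)\sigma(x)$ to $\sigma(b)\sigma(x)$, and every other configuration is fixed. This is exactly $\tau_{\sigma(S),\sigma(C)}$.
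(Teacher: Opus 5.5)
Your proof is correct. For the preservation of $E(G)$, the star invariance \eqref{eq:outside-star-invariance} and the conjugation identity \eqref{eq:switch-conjugation}, your argument is essentially the paper's: twinness makes $ax,bx$ edges together, only $ua,ub$ can move inside $\delta_G(u)$, and the conjugation is a relabelling through $\sigma$.

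The difference is the automorphism property. The paper does not verify it; it imports it from \cite[Proposition~1]{twincuts}, using the observation that for two tokens the occupancy profiles are unit vectors. You instead check $P\sim Q\Rightarrow\tau(P)\sim\tau(Q)$ directly, and obtain the converse because $\tau$ is an involution. Your case split is complete.
\begin{itemize}
\item With $P=ux$, $Q=uy$, the case $u\in S$ uses only that an edge leaving $C$ must end in $S$, together with twinness for the pair $\{ax,ab\}$.
\item The case $u\notin S$ uses only twinness, to turn $st\in E(G)$ into $s't\in E(G)$.
\end{itemize}
The unfinished phrase ``$t\in V(C)\cup\{\ldots\}$'' should be deleted. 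Nothing about the location of $t$ beyond $t\notin S$ is used, and $ut$ is fixed simply because it contains no vertex of $S$.

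Your route is self-contained and shows exactly which hypotheses matter. It uses neither connectivity of $G$ nor disconnectedness of $G-S$: only twinness of $a,b$ and the absence of edges from $C$ to $V(G)\setminus(S\cup V(C))$. The paper's citation keeps the text short and places the switch inside the general $k$-token flip construction of \cite{twincuts}, at the cost of relying on the profile identification.
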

\begin{proof}
By \cite[Proposition~1]{twincuts} and the profile identification above,
the switch is a token automorphism. Twinness implies that the exchanged
configurations $ax,bx$ either both belong to $E(G)$ or neither does,
so $E(G)$ is preserved. For $u\notin S$, the switch can only exchange
$ua,ub$ in $\delta_G(u)$; these also belong to $E(G)$ together or not
at all. This proves \eqref{eq:outside-star-invariance}.
Relabelling the exchanged pairs gives \eqref{eq:switch-conjugation}.
\end{proof}

\begin{lemma}\label{lem:cut-compatibility}
Let $G$ be connected of order at least three. Distinct twin 2-cuts are
disjoint; the endpoints of one can be separated by deleting the other
only when $G\cong C_4$. At order at least five, all component switches commute.
\end{lemma}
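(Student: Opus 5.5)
We prove the three assertions of Lemma~\ref{lem:cut-compatibility} in turn, working directly from the twin condition \(N_G(a)\setminus\{b\}=N_G(b)\setminus\{a\}\).

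\emph{Disjointness.} Let \(S=\{a,b\}\) and \(S'=\{a,c\}\) be distinct twin 2-cuts sharing \(a\), with \(b\ne c\). Then \(a,b\) have the same neighbours outside the pair, and so do \(a,c\). We first check that \(b,c\) are twins. Take \(w\notin\{a,b,c\}\). Then \(w\sim b\) iff \(w\sim a\) iff \(w\sim c\). In \(G-S\) every neighbour of \(c\) other than \(a,b\) is a common neighbour of \(a\) and \(b\). We aim to contradict that \(G-S\) is disconnected. Every vertex \(w\notin\{a,b\}\) with a neighbour in \(S\) is adjacent to \(a\), and (if \(w\ne c\)) also to \(c\). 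The vertex \(c\) lies in \(G-S\), so every component of \(G-S\) containing a neighbour of \(a\) contains \(c\). Since \(G\) is connected, each component of \(G-S\) has a neighbour in \(S\), hence a neighbour of \(a\), because \(N(a)\setminus\{b\}=N(b)\setminus\{a\}\). So each component of \(G-S\) contains \(c\), unless \(c\) itself is not adjacent to that neighbour. This fails only when \(w=c\), and then the component is the one containing \(c\) anyway. Thus \(G-S\) has a single component, a contradiction.

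\emph{Separation.} Let \(S=\{a,b\}\) and \(S'=\{c,d\}\) be disjoint twin 2-cuts, and suppose \(c,d\) lie in different components of \(G-S\). Every neighbour of \(c\) outside \(\{c,d\}\) is a neighbour of \(d\). Any such neighbour outside \(S\) would join the two components, so \(N(c)\setminus\{d\}\subseteq\{a,b\}\), and \(cd\) is not an edge. Connectivity of \(G\), together with \(a,b\) being twins, forces \(c\) and \(d\) to be adjacent to both \(a\) and \(b\). By symmetry \(N(a)\setminus\{b\}\subseteq\{c,d\}\) and \(N(b)\setminus\{a\}\subseteq\{c,d\}\). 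Then \(\{a,b,c,d\}\) is closed under neighbourhoods, so it is all of \(G\) by connectivity. It remains to rule out \(ab\in E(G)\): that edge would join \(a\) and \(b\), which must lie in different components of \(G-S'\). So \(G\cong C_4\).

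\emph{Commutation.} Switches \(\tau_{S,C}\) and \(\tau_{S,C'}\) at the same cut act on disjoint sets of configurations, \(\{ax,bx\}\) with \(x\in C\) versus \(x\in C'\), so they commute. For distinct cuts, which are disjoint by the first part, at order at least five \(c,d\) lie in a single component \(D\) of \(G-S\), and \(a,b\) lie in a single component \(D'\) of \(G-S'\). The support of \(\tau_{S,C}\) consists of the configurations \(ax,bx\) with \(x\in C\). Now \(\tau_{S',C'}\) moves such a configuration only if it has the form \(cy\) or \(dy\). This happens when \(x\in\{c,d\}\), so \(C=D\), and \(\tau_{S',C'}\) exchanges \(ac\leftrightarrow ad\) exactly when \(a\in C'\), i.e.\ \(C'=D'\). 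In the overlapping case \(C=D\), \(C'=D'\), both switches act on the square \(\{ac,ad,bc,bd\}\): one swaps \(a\leftrightarrow b\), the other \(c\leftrightarrow d\), and these commute. Elsewhere both act by the respective swaps on disjoint sets, which we check case by case. The main obstacle is exactly this bookkeeping of overlapping supports: we must verify that on each configuration the composite in either order is the simultaneous letter swap \(a\leftrightarrow b\) on \(C\)-indexed pairs and \(c\leftrightarrow d\) on \(C'\)-indexed pairs, using that \(S'\setminus S\subseteq D\) and \(S\subseteq D'\).
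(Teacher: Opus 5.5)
\textbf{Gap in the separation step.} The step ``By symmetry $N(a)\setminus\{b\}\subseteq\{c,d\}$'' is not justified. Your hypothesis is only that $S$ separates $c$ from $d$. The mirrored argument needs $a,b$ to lie in different components of $G-S'$. You assert this later (``which must lie in different components of $G-S'$'') but never prove it.

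It does not follow from the twin conditions alone. Take $K_{2,3}$ with parts $\{a,b\}$ and $\{c,d,e\}$: the twin 2-cut $\{a,b\}$ separates the twins $c,d$, yet $N_G(a)=\{c,d,e\}$. What fails there is that $\{c,d\}$ is not a cut, so the missing step must use that $G-S'$ is disconnected.

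That hypothesis does close the gap. You have already shown $N_G(c)=N_G(d)=\{a,b\}$. By connectedness of $G$, every component of $G-S'$ therefore contains $a$ or $b$. Since $G-S'$ has at least two components, it separates $a$ from $b$. With this line inserted, your mirrored argument gives $N_G(a)\setminus\{b\}\subseteq\{c,d\}$ and $ab\notin E(G)$, and $G\cong C_4$ follows as you wrote.

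\textbf{The rest of the proposal.} The paper does not prove disjointness or the $C_4$ exception; it cites Lemma~1 of \cite{twincuts}. Your direct disjointness argument is correct: it shows that $G-\{a,b\}$ is connected whenever $a$ has a second twin $c$. Once repaired, your self-contained route also makes visible why $C_4$ is the only exception.

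Your commutation argument is essentially the paper's. The deferred ``case by case'' check is immediate. In the overlapping case $C=D$, $C'=D'$, each switch maps the square $\{ac,ad,bc,bd\}$ to itself, and each preserves its own support. Off the square, each switch's action is therefore supported away from the other switch. In all other cases the supports are disjoint.
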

\begin{proof}
The disjointness and separation assertions, including the \(C_4\)
exception, are \cite[Lemma~1]{twincuts}. For commutation, assume that
the order is at least five. Distinct switches at the same cut have disjoint
supports, so they commute. For distinct cuts \(S,R\),
their vertices are disjoint. If a configuration has one token in each cut,
both endpoints of \(R\) lie in one component of \(G-S\), and both
endpoints of \(S\) lie in one component of \(G-R\).
Thus each switch preserves the component that determines whether the other
acts. They move different tokens and therefore commute. In every other configuration at most
one switch can act, and its move cannot create a token in the other
cut. \end{proof}

\section{Normalization of the edge correspondence}

For $X=F_2(G)$,
\begin{equation}\label{eq:marked-neighborhood}
 N_X(uv)\cap E(G)=\{ux,vx:x\in N_G(u)\cap N_G(v)\},\qquad
 u,v\text{ are twins}\ \Longleftrightarrow\ N_X(uv)\subseteq E(G).
\end{equation}
Indeed, a neighbour of $uv$ outside $E(G)$ corresponds exactly to a
vertex adjacent to one of $u,v$ and not the other. These statements are
preserved by every isomorphism mapping $E(G)$ onto $E(H)$.

Incident edges of $G$ may have disjoint images. We will show that this
can happen only at twin pairs. For vertices $a,b$, we compare the difference
vectors of the configurations $ax,bx$ under the adjacency operator.
The corrected fourth moment is nonnegative, but the incidence correspondence
reverses its sign. It must therefore vanish, which implies twinness.

We also regard \(A_Y\) as the adjacency operator on
\(\mathbb R^{V(Y)}\), with standard orthonormal basis
\((e_z)_{z\in V(Y)}\). Write \(I\) for the identity on the indicated
coordinate space. We use the Euclidean inner product and norm.
For every finite simple graph \(Y\) of order at least three, put
\[
 T_Y=A_{F_2(Y)}+2I
 \quad\text{on }\mathbb R^{V(F_2(Y))}.
\]
For an isomorphism \(\phi:F_2(G)\to F_2(H)\), we also write \(\phi\)
for the linear map defined by \(\phi(e_P)=e_{\phi(P)}\) on the configuration
basis. This map is orthogonal and satisfies \(\phi T_G=T_H\phi\).

\begin{lemma}\label{lem:difference-vectors}
Let \(G\) have order at least three. Write \(T=T_G\). For distinct \(a,b\), set
\(U=V(G)\setminus\{a,b\}\), \(\chi=A_G(a,b)\) and
\(B=A_{G[U]}+(2-\chi)I\). For \(x\in U\), put
\[
 d_x=e_{ax}-e_{bx},\qquad s_x=e_{ax}+e_{bx},\qquad
 \Delta_x=A_G(a,x)-A_G(b,x).
\]
Write \(\Delta=(\Delta_i)_{i\in U}\). Here \(B\) acts on \(\mathbb R^U\).
Then, for every \(x\in U\),
\begin{equation}\label{eq:root-difference}
 Td_x=\sum_{i\in U}B_{ix}d_i+r_x,\qquad
 r_x=\sum_{i\in U\setminus\{x\}}\Delta_i e_{xi}-\Delta_x e_{ab}.
\end{equation}
The remainder is orthogonal to
\(W^-_{a,b}=\operatorname{span}\{d_i:i\in U\}\), and
\(\|r_x\|^2=\|\Delta\|^2\). Consequently \(W^-_{a,b}\) is \(T\)-invariant
if and only if \(a,b\) are twins.
\end{lemma}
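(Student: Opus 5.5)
Since \(T=A_{F_2(G)}+2I\), the plan is to expand \(Td_x=A_{F_2(G)}e_{ax}-A_{F_2(G)}e_{bx}+2d_x\) using the neighbourhood formula of the first lemma. We then sort the resulting configurations by their intersection with \(\{a,b\}\).

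\emph{Expansion of \(A_{F_2(G)}e_{ax}\).} The neighbours of \(ax\) are \(az\) for \(z\in N_G(x)\setminus\{a\}\), together with \(xz\) for \(z\in N_G(a)\setminus\{x\}\). The first family splits into \(ai\) with \(i\in U\), \(A_G(x,i)=1\), and the term \(ab\) when \(A_G(x,b)=1\). The second family splits into \(xi\) with \(i\in U\setminus\{x\}\), \(A_G(a,i)=1\), and the term \(xb\) when \(\chi=1\). Symmetrically, \(A_{F_2(G)}e_{bx}\) contributes \(bi\) for \(i\in N_G(x)\cap U\), the term \(ab\) when \(A_G(x,a)=1\), the terms \(xi\) with \(A_G(b,i)=1\), and \(xa\) when \(\chi=1\).

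\emph{Collecting the difference.} Subtracting, the \(ai,bi\) terms give \(\sum_{i\in U}A_{G[U]}(i,x)d_i\). The two \(\chi\) terms give \(\chi(e_{bx}-e_{ax})=-\chi d_x\), and adding \(2d_x\) leaves a net \((2-\chi)d_x\). These pieces combine to \(\sum_i B_{ix}d_i\). The \(ab\) terms give \((A_G(b,x)-A_G(a,x))e_{ab}=-\Delta_xe_{ab}\). The \(xi\) terms give \(\sum_{i\in U\setminus\{x\}}\Delta_ie_{xi}\). This yields \eqref{eq:root-difference}.

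\emph{Orthogonality and the norm.} The remainder \(r_x\) is supported on configurations disjoint from \(\{a,b\}\), or on \(ab\) itself. Every \(d_i\) is supported on configurations meeting \(\{a,b\}\) in exactly one vertex, so \(r_x\perp W^-_{a,b}\). The basis vectors \(e_{xi}\) (\(i\ne x\)) and \(e_{ab}\) are distinct, so \(\|r_x\|^2=\sum_{i\ne x}\Delta_i^2+\Delta_x^2=\|\Delta\|^2\).

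\emph{The equivalence.} The main point is orthogonality. Since \(\sum_iB_{ix}d_i\in W^-_{a,b}\) and \(r_x\perp W^-_{a,b}\), the space is \(T\)-invariant iff every \(r_x=0\). Because \(U\) is nonempty (order at least three), this holds iff \(\|\Delta\|=0\). Finally, \(\Delta=0\) says precisely that \(a,b\) have the same neighbours in \(U\), which is twinness. No step is a genuine obstacle; the only care needed is the bookkeeping of the \(\chi\) and \(ab\) terms.
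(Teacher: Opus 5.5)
Your proof is correct and follows the same route as the paper. You expand \(Td_x\) through the token neighbourhoods of \(ax\) and \(bx\), where the paper phrases this as classifying token moves. Then the disjoint supports give orthogonality and the norm identity, and invariance forces each \(r_x\in W^-_{a,b}\cap(W^-_{a,b})^{\perp}=0\), hence \(\Delta=0\).
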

\begin{proof}
Moving the token at \(x\) within \(U\) contributes
\(\sum_{i\in U}A_G(x,i)d_i\). Moving the token between \(a,b\)
contributes \(-\chi d_x\), and \(2I\) contributes \(2d_x\).
Since \(B\) is symmetric, their sum is \(\sum_{i\in U}B_{ix}d_i\).
Moving the token at \(a\) or \(b\) to \(i\in U\setminus\{x\}\) contributes
\(\Delta_i e_{xi}\); moving the token at \(x\) into the other member of \(\{a,b\}\) contributes
\(-\Delta_x e_{ab}\). These are all the remaining moves.

Each \(d_i\) is supported on configurations containing exactly one of
\(a,b\), whereas \(r_x\) is supported on pairs contained in \(U\) and on
\(ab\). Hence \(r_x\) is perpendicular to \(W^-_{a,b}\).
Its basis vectors are distinct, so
\(\|r_x\|^2=\sum_{i\in U\setminus\{x\}}\Delta_i^2+\Delta_x^2=\|\Delta\|^2\).
Since \(U\ne\varnothing\), invariance implies that the orthogonal
remainder vanishes, and hence \(\Delta=0\). Conversely, \(\Delta=0\)
makes the subspace invariant. This equality is precisely the twin condition.
\end{proof}

\subsection{Two-step coefficients and the fourth moment}
\begin{lemma}\label{lem:corrected-moment}
Let $v,w,x,y$ be distinct vertices with
$vx,wx,vy,wy\in E(G)$, $xy\notin E(G)$, and
$N_G(x)\cap N_G(y)=\{v,w\}$. Use
Lemma~\ref{lem:difference-vectors} with distinguished vertices $v,w$.
Let $\Pi$ be the orthogonal projection onto
$\operatorname{span}\{e_{vw},e_{xy}\}$. Then
\begin{equation}\label{eq:corrected-moment}
 \langle d_x,(T^4-3T^2\Pi T^2)d_y\rangle
   =2(B^4)_{xy}+2\|\Delta\|^2\geq2\|\Delta\|^2\geq0.
\end{equation}
If this scalar is zero, then $v,w$ are twins.
\end{lemma}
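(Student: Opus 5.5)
The plan is to compute the scalar by expanding everything through the decomposition \eqref{eq:root-difference} of Lemma~\ref{lem:difference-vectors}, taken with distinguished pair \((a,b)=(v,w)\), and then to read off the sign. Since \(T\) is symmetric, \(\langle d_x,T^4d_y\rangle=\langle T^2d_x,T^2d_y\rangle\). Likewise \(\langle d_x,T^2\Pi T^2d_y\rangle=\langle \Pi T^2d_x,\Pi T^2d_y\rangle\), because \(\Pi\) is an orthogonal projection. So the left side of \eqref{eq:corrected-moment} is
\[
 \langle T^2d_x,(I-\Pi)T^2d_y\rangle-2\langle\Pi T^2d_x,\Pi T^2d_y\rangle .
\]
The hypotheses fix several quantities. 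We have \(\chi=A_G(v,w)\). Since \(x,y\) are adjacent to both \(v,w\), we get \(\Delta_x=\Delta_y=0\). This kills the \(e_{vw}\)-terms of \(r_x\) and \(r_y\), leaving \(r_x=\sum_{i\ne x}\Delta_ie_{xi}\) and \(r_y=\sum_{i\ne y}\Delta_ie_{yi}\).

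First I will expand \(T^2d_x=\sum_i(B^2)_{ix}d_i+\sum_iB_{ix}r_i+Tr_x\) and split it into its \(W^-_{v,w}\)-component and the orthogonal component. For the component of \(Tr_x\) in \(W^-_{v,w}\) I will use \(\langle d_j,Tr_x\rangle=\langle Td_j,r_x\rangle=\langle r_j,r_x\rangle\), which follows from \eqref{eq:root-difference} and \(r_x\perp W^-_{v,w}\). The Gram entries \(\langle r_i,r_j\rangle\) are explicit from the supports \(e_{xi}\), so the \(W^-\)-coefficients become \((B^2)_{ix}+\langle r_i,r_x\rangle\). Since \(\langle d_i,d_j\rangle=2\delta_{ij}\), the \(W^-\)-part of the inner product is twice a sum of products of these coefficients. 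This is where \(2(B^4)_{xy}\) will appear, together with cross terms.

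Next I will compute the orthogonal parts. These are supported on pairs inside \(U=V(G)\setminus\{v,w\}\) and on \(vw\), so their inner product is a finite sum over configurations. I expect the terms meeting \(e_{vw}\) and \(e_{xy}\) to be exactly where the overcounted and cross contributions concentrate. This is because \(xy\notin E(G)\) and \(N_G(x)\cap N_G(y)=\{v,w\}\), so every two-step walk from the \(x\)-side to the \(y\)-side passes through \(vx,vy,wx,wy\) and lands on \(vw\) or \(xy\). I will compute the two coefficients \(\langle T^2d_x,e_{vw}\rangle\), \(\langle T^2d_x,e_{xy}\rangle\) and their \(y\)-analogues explicitly. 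I will then check that \(-3\) times their products cancels all cross terms apart from a surviving \(2\sum_i\Delta_i^2=2\|\Delta\|^2\), which comes from pairing \(r_x\) and \(r_y\) through \(T\).

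Finally, \(B=A_{G[U]}+(2-\chi)I\) is entrywise nonnegative, because \(2-\chi\ge1\). Hence \((B^4)_{xy}\ge0\), which gives both inequalities. If the scalar vanishes, then \(\|\Delta\|^2=0\), so \(A_G(v,i)=A_G(w,i)\) for all \(i\in U\); this is exactly twinness of \(v,w\). The main obstacle is the bookkeeping in the second step: verifying that the \(\Pi\)-correction with coefficient \(3\) exactly cancels the mixed \(B\)–\(\Delta\) terms. If the direct cancellation fails, I would instead evaluate both sides as weighted counts of closed four-step token walks from \(d_y\) to \(d_x\) and group them by whether they visit \(vw\) or \(xy\).
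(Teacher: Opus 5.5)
Your route is essentially the paper's: expand \(u_z=T^2d_z\) through \eqref{eq:root-difference}, sort coefficients by support, and subtract the \(\Pi\)-part. However, the proposal stops where the proof starts. The identity \eqref{eq:corrected-moment} is the entire content of the lemma, and you leave the decisive cancellation as an expectation with a fallback plan. The concrete predictions you do make are also off. Since \(\|d_j\|^2=2\) and, because \(\Delta_x=0\), \(\langle r_j,r_x\rangle=\|\Delta\|^2\delta_{jx}\), the \(d_j\)-coefficient of \(u_x\) is \((B^2)_{jx}+\tfrac12\|\Delta\|^2\delta_{jx}\), not \((B^2)_{jx}+\langle r_j,r_x\rangle\). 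As \((B^2)_{xy}=0\), the \(W^-_{v,w}\)-part of \(\langle u_x,u_y\rangle\) is exactly \(2(B^4)_{xy}\) and contains no cross terms. The term \(2\|\Delta\|^2\) is not what survives a cancellation. It is the pairing of the components \(\Delta_is_i\), for \(i\in C=U\setminus\{x,y\}\), which \(Tr_x\) and \(Tr_y\) both contain from moving the token at \(z\) into \(v\) or \(w\).

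The missing idea is that every cross term is a multiple of \(c_xc_y\), where \(c_z=\sum_{i\in C}A_G(z,i)\Delta_i\), and that two thirds of it lies off the diagonals.
\begin{itemize}
\item At \(vw\) and \(xy\) the coefficients of \(u_x\) are \(-c_x\) and \(c_y\), and symmetrically for \(u_y\), so \(\langle\Pi u_x,\Pi u_y\rangle=2c_xc_y\).
\item At each configuration \(ij\) with \(i,j\in C\), both \(\sum_kB_{kz}r_k\) and \(Tr_z\) contribute \(A_G(z,i)\Delta_j+A_G(z,j)\Delta_i\). Multiply these and use \(A_G(x,i)A_G(y,i)=0\) for \(i\in C\). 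This is a second use of \(N_G(x)\cap N_G(y)=\{v,w\}\), beyond \((B^2)_{xy}=0\). It shows that these configurations contribute \(4c_xc_y\).
\item You must also check that \(u_y\) vanishes at \(vx,wx\) and at every \(xk\) with \(k\in C\), and symmetrically for \(u_x\).
\end{itemize}
Together these give \(\langle u_x,u_y\rangle=2(B^4)_{xy}+2\|\Delta\|^2+6c_xc_y\), which is why the constant is \(3\). In your rearrangement, \(-2\langle\Pi u_x,\Pi u_y\rangle=-4c_xc_y\) cancels the \(4c_xc_y\) coming from \(\binom{C}{2}\) inside \(\langle u_x,(I-\Pi)u_y\rangle\). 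Your heuristic that the relevant walks land on \(vw\) or \(xy\) misses exactly these midpoints \(ij\subseteq C\). The closing sign argument is fine: \(B\) is entrywise nonnegative, and \(\Delta=0\) means twinness.
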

\begin{proof}
Put \(C=U\setminus\{x,y\}\), \(q_z=B^2e_z\),
\(c_z=\sum_{i\in C}A_G(z,i)\Delta_i\) and \(u_z=T^2d_z\), for \(z=x,y\).
The hypotheses give
\(\Delta_x=\Delta_y=B_{xy}=(B^2)_{xy}=0\), so
\(r_z=\sum_{i\in C}\Delta_i e_{zi}\).
By \eqref{eq:root-difference}, \(u_z=T\bigl(\sum_{i\in U}B_{iz}d_i\bigr)+Tr_z\).
The first summand gives coefficient \(-c_z\) at \(e_{vw}\).
In the second summand, moving the token in \(C\) to the other member of
\(\{x,y\}\) gives the following coefficients:
\[
 \begin{array}{c|cc}
     &u_x&u_y\\ \hline
  vw&-c_x&-c_y\\
  xy& c_y& c_x
 \end{array}.
\]

For the part supported on configurations containing exactly one of \(v,w\),
the first summand contributes \(\sum_{i\in U}q_z(i)d_i\), while moving
the token at \(z\) to \(v\) or \(w\) in \(Tr_z\) contributes
\(\sum_{i\in U}\Delta_i s_i\). For \(i\in C\), the coefficients
of \(e_{vi},e_{wi}\) are \(q_z(i)+\Delta_i\) and
\(-q_z(i)+\Delta_i\), respectively. Their products for \(z=x,y\) sum to
\(2q_x(i)q_y(i)+2\Delta_i^2\).
The coefficients of \(u_y\) at \(vx,wx\), and of \(u_x\) at \(vy,wy\),
are zero because \((B^2)_{xy}=\Delta_x=\Delta_y=0\).
Symmetry of \(B\) therefore makes the contribution from configurations containing exactly one of \(v,w\)
\(2(B^4)_{xy}+2\|\Delta\|^2\).

For \(\{i,j\}\in\binom C2\), the coefficient in \(u_z\) is
\(2(A_G(z,i)\Delta_j+A_G(z,j)\Delta_i)\), one copy from each summand.
The vertices \(x,y\) have disjoint neighbourhoods in the deletion graph,
so these configurations contribute the following to the inner product:
\[
 4\sum_{i\in N_{G[U]}(x)}\sum_{j\in N_{G[U]}(y)}
      \Delta_i\Delta_j=4c_xc_y.
\]
Every remaining configuration contained in \(U\), other than \(xy\),
contains \(x\) or \(y\) and has zero coefficient in the vector indexed by
the other vertex. The two diagonal configurations in the table contribute
\(2c_xc_y\). Adding the contributions gives
\[
 \langle d_x,T^4d_y\rangle
  =2(B^4)_{xy}+2\|\Delta\|^2+6c_xc_y.
\]
The table also gives \(\langle u_x,\Pi u_y\rangle=2c_xc_y\).
Subtracting three times this pairing proves \eqref{eq:corrected-moment}.
Since \(B\) is entrywise nonnegative, a zero corrected scalar forces \(\Delta=0\).
\end{proof}

\subsection{Incidence and boundary pairs}
\begin{lemma}
Let $G,H$ be connected finite simple graphs of the same order $n\geq5$,
and let $\phi:F_2(G)\to F_2(H)$ be a graph isomorphism with $\phi(E(G))=E(H)$. If the images of
distinct edges $vx,vy\in E(G)$ are disjoint, there are
$w\in V(G)\setminus\{v,x,y\}$ and distinct $V,W,X,Y\in V(H)$ such
that $xy,XY$ are nonedges,
\[
 N_G(x)\cap N_G(y)=\{v,w\},\qquad
 N_H(X)\cap N_H(Y)=\{V,W\},
\]
the pairs $v,w$ and $V,W$ are twins, and, after naming the target vertices,
\[
 \phi(vx)=WX,\quad\phi(wx)=VX,\quad
 \phi(vy)=VY,\quad\phi(wy)=WY,
 \qquad\phi(xy)=XY,\quad\phi(vw)=VW.
\]
\end{lemma}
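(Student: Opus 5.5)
We first fix the local picture. Since \(vx\sim vy\) in \(F_2(G)\) (the move \(x\to y\) is legal only if \(xy\in E(G)\)), we split into two cases. If \(xy\in E(G)\), then \(vx,vy\) are adjacent configurations, and their images are adjacent, hence intersect; this contradicts disjointness. So \(xy\notin E(G)\), and \(vx,vy\) are at distance two with common neighbours \(\{vz: z\in N(x)\cap N(y)\setminus\{v\}\}\), together with \(xy\) exactly when \(v\) is adjacent to both. The latter holds here. Thus \(\codeg_{F_2(G)}(vx,vy)=\codeg_G(x,y)\) by \eqref{eq:degrees}. On the target side, \(\phi(vx),\phi(vy)\) are disjoint edges of \(H\), say \(\phi(vx)=WX\) and \(\phi(vy)=VY\). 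By \eqref{eq:disjoint} their codegree is \(2\) or \(4\), and it is nonzero because \(xy\) is a common neighbour. Hence \(\codeg_G(x,y)\in\{2,4\}\).

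Next we identify the common neighbours. Label \(H\) so that the cross edges are \(WV,XY\) (codegree \(2\)); the case of codegree \(4\) is handled separately below. The two common token neighbours are then \(VX\) and \(WY\), and the vertices \(VW\) and \(XY\) give the alternative rectangle. In \(G\) the common neighbours are \(xy\) and \(vw\) for the unique \(w\in N(x)\cap N(y)\setminus\{v\}\), so \(N_G(x)\cap N_G(y)=\{v,w\}\). The marked set is preserved, so \(\phi(xy)\), a nonedge, must be the non-edge common neighbour. The configurations \(VX\) and \(WY\) then need sorting, which we do using the requirement \(\phi(E(G))=E(H)\): both are common neighbours, and \(xy\notin E(G)\), so exactly one of \(VX,WY\) is a nonedge of \(H\). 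After relabelling \(V\leftrightarrow W\), \(X\leftrightarrow Y\) as needed, we get \(\phi(xy)=XY\) and \(\phi(vw)=VW\) (or \(VX\)).

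We then track \(wx\) and \(wy\). The configuration \(wx\) is a common neighbour of \(vx\) and \(xy\)? Not quite: \(wx\) is adjacent to \(vw\) and \(xy\). We would use Lemma~\ref{lem:token-square-types} on the 4-cycles \((vx,xy,wy,vw)\)-type rectangles. The cycle \((vx,vw,wy,xy)\) is a rectangle with \(vw\)... Concretely, the configurations \(vx,wx,wy,vy\) form a 4-cycle through the moves \(v\leftrightarrow w\) and \(x\leftrightarrow y\) via the common neighbours \(vw\) and \(xy\). We would determine \(\phi(wx)\) and \(\phi(wy)\) as the remaining common neighbours forced by codegree counts. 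This yields the displayed assignments, after possibly swapping names, and shows \(N_H(X)\cap N_H(Y)=\{V,W\}\) by the symmetric argument applied to \(\phi^{-1}\).

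The main obstacle is twinness of \(v,w\) (and of \(V,W\)), together with excluding the codegree-\(4\) case. Here we invoke Lemma~\ref{lem:corrected-moment}. The hypotheses hold in \(G\) with distinguished pair \(v,w\), and in \(H\) with \(V,W\). The plan is to transport the vector \(d_x=e_{vx}-e_{wx}\) under \(\phi\). Since \(\phi(vx)=WX\) and \(\phi(wx)=VX\), we get \(\phi d_x=-d_X\), while \(\phi d_y=d_Y\). The projection \(\Pi\) is carried to \(\Pi'\) because \(\phi\{vw,xy\}=\{VW,XY\}\). Orthogonality and \(\phi T_G=T_H\phi\) then give
\[
\langle d_x,(T^4-3T^2\Pi T^2)d_y\rangle_G=-\langle d_X,(T^4-3T^2\Pi' T^2)d_Y\rangle_H .
\]
Both sides are nonnegative by \eqref{eq:corrected-moment}, so both vanish, and Lemma~\ref{lem:corrected-moment} gives twinness on both sides. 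For codegree \(4\), we expect a similar contradiction from counting the four common neighbours against \(\codeg_G(x,y)\), which forces \(N_G(x)\cap N_G(y)\) to contain extra vertices; we would rule this out by the same sign-reversal argument applied to a suitable pair. This step is the one we are least sure of.
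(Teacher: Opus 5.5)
Your first paragraph and the sign-reversal comparison are the right ideas, and they match the paper. The genuine gap is in your third paragraph.

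Counting common neighbours yields only $\{\phi(wx),\phi(wy)\}=\{VX,WY\}$. For example, $\phi(wx),\phi(wy)$ are common neighbours of the disjoint configurations $XY=\phi(xy)$ and $VW=\phi(vw)$, whose common neighbours are exactly $WX,VY,VX,WY$. The option $\phi(wx)=WY$, $\phi(wy)=VX$ is not a renaming of the displayed one: once $\phi(xy)=XY$ and $\phi(vx)=WX$ are fixed, the conclusion forces $X\in\phi(wx)$. In this option $\phi(d_x)=e_{WX}-e_{WY}$, so your identity $\phi d_x=-d_X$ fails and the comparison in the $V,W$ frame never starts.

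This option really occurs once the global hypotheses are dropped. Take $G=H=C_4$ on cyclic vertices $a,b,c,d$, and the permutation $ab\mapsto ab$, $ad\mapsto cd$, $cb\mapsto ad$, $cd\mapsto bc$ of the edge configurations, fixing both diagonals. It is an $E$-preserving automorphism of $F_2(C_4)=K_{4,2}$. With $(v,w,x,y)=(a,c,b,d)$ the images of $vx,vy$ are disjoint, yet $\phi(vx)\cap\phi(wx)=\{a\}$ misses $\phi(xy)=bd$. Your argument never uses connectedness or $n\ge5$, so it cannot be complete.

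The paper handles both options at once. It writes the four images as $PR,QR,QS,PS$ with $(P,Q,R,S)=(W,V,X,Y)$ or $(X,Y,W,V)$, and checks $A_H(R,S)=0$ and $\codeg_H(R,S)=\codeg_{F_2(G)}(vx,wy)=2$. It then applies Lemma~\ref{lem:corrected-moment} in the $P,Q$ frame, where $\phi(d_x)=d'_R$ and $\phi(d_y)=-d'_S$ for $d'_Z=e_{PZ}-e_{QZ}$. This makes $v,w$ and $P,Q$ twins. The pair $V,W$ is twin because $N_{F_2(G)}(vw)\subseteq E(G)$ transfers through $\phi$ by \eqref{eq:marked-neighborhood}. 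In the second option $\{P,Q\}=\{X,Y\}$, and the two nonadjacent twin pairs give $N_H(V)=N_H(W)=\{X,Y\}$ and $N_H(X)=N_H(Y)=\{V,W\}$. That is an isolated $4$-cycle, impossible for connected $H$ of order at least five.

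The codegree-four case you leave open needs no moment argument. If the disjoint edges $\phi(vx)=ab$, $\phi(vy)=cd$ had codegree four, both cross matchings would be edges of $H$, so all four common neighbours $ac,ad,bc,bd$ would lie in $E(H)$. But $\phi(xy)$ is a common neighbour outside $E(H)$, because $xy\notin E(G)$ and $\phi(E(G))=E(H)$.

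Your second paragraph also needs repair. Choosing labels so that $WV,XY$ are the cross edges contradicts the later $\phi(xy)=XY$. The claim that exactly one of $VX,WY$ is a nonedge fails when $vw\notin E(G)$. Instead, name the vertices from $\phi(xy)$ directly, with $X\in\phi(vx)$ and $Y\in\phi(vy)$. Adjacency of $WX$ and $VY$ to $XY$ then forces $WY,VX\in E(H)$, so $VW$ is the other common neighbour and $\phi(vw)=VW$.

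Finally, $N_H(X)\cap N_H(Y)=\{V,W\}$ follows from \eqref{eq:marked-neighborhood}. Indeed, $\phi$ maps the four configurations of $N_{F_2(G)}(xy)\cap E(G)$ onto $N_{F_2(H)}(XY)\cap E(H)$, which has $2\codeg_H(X,Y)$ elements. Your appeal to the symmetric argument for $\phi^{-1}$ only works once the assignment of $wx,wy$ is already known.
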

\begin{proof}
Disjoint token configurations cannot be adjacent, so \(xy\) is a nonedge.
Its image is a common neighbour of \(\phi(vx),\phi(vy)\) outside \(E(H)\).
By \eqref{eq:disjoint}, their codegree is two: it is nonzero, while codegree
four would make all four common neighbours members of \(E(H)\).
Thus \(N_G(x)\cap N_G(y)=\{v,w\}\) for a unique \(w\), and the two
common token neighbours in the source are \(xy,vw\).

Write \(\phi(xy)=XY\), choosing \(X\in\phi(vx)\), \(Y\in\phi(vy)\),
and name the remaining endpoints \(W,V\), respectively.
Then \(\phi(vx)=WX\), \(\phi(vy)=VY\); adjacency to \(XY\) gives
\(WY,VX\in E(H)\). Thus \(VW\) is the other common neighbour, so
\(\phi(vw)=VW\).
The four configurations in \(N_{F_2(G)}(xy)\cap E(G)\) map onto \(N_{F_2(H)}(XY)\cap E(H)\).
By \eqref{eq:marked-neighborhood}, \(\codeg_H(X,Y)=2\).
The known common neighbours \(V,W\) therefore exhaust the common-neighbour
set. Hence
\[
 \{\phi(wx),\phi(wy)\}=\{VX,WY\}.
\]
If \(\phi(wx)=VX\), choose \((P,Q,R,S)=(W,V,X,Y)\);
if \(\phi(wx)=WY\), choose \((P,Q,R,S)=(X,Y,W,V)\).
Both choices give
\[
 \phi(vx)=PR,\quad\phi(wx)=QR,\quad
 \phi(vy)=QS,\quad\phi(wy)=PS,
 \qquad \{PQ,RS\}=\{VW,XY\}.
\]
Adjacency of \(vx,wx\) and their images, together with
\(\phi(E(G))=E(H)\) and \(\phi(vw)=VW\), gives
\[
 A_H(P,Q)=A_G(v,w)=A_H(V,W),\qquad
 A_H(R,S)=A_H(V,W)+A_H(X,Y)-A_H(P,Q)=0.
\]
Moreover, \eqref{eq:degrees} and \eqref{eq:disjoint} give
\[
 \codeg_H(R,S)=\codeg_{F_2(H)}(PR,PS)=\codeg_{F_2(G)}(vx,wy)=2,
\]
since \(vy,wx\) are edges and \(xy\) is absent. The cross edges
\(PR,QR,PS,QS\) make \(P,Q\) common neighbours. Thus
\(N_H(R)\cap N_H(S)=\{P,Q\}\), as required by the moment lemma.

Put \(d_z=e_{vz}-e_{wz}\) for \(z\in V(G)\setminus\{v,w\}\), and
\(d'_Z=e_{PZ}-e_{QZ}\) for \(Z\in V(H)\setminus\{P,Q\}\).
Then \(\phi(d_x)=d'_R\) and \(\phi(d_y)=-d'_S\).
Let \(\Pi_G,\Pi_H\) project orthogonally onto
\(\operatorname{span}\{e_{vw},e_{xy}\}\) and
\(\operatorname{span}\{e_{PQ},e_{RS}\}\), respectively.
The two diagonal images give \(\phi\Pi_G\phi^{-1}=\Pi_H\), regardless
of their order. Put \(K_Y=T_Y^4-3T_Y^2\Pi_YT_Y^2\), for \(Y=G,H\).
Since \(\phi\) intertwines the adjacency operators, \(\phi K_G\phi^{-1}=K_H\).
Lemma~\ref{lem:corrected-moment} applies to both quadruples, so
\[
 0\le\langle d_x,K_Gd_y\rangle
 =-\langle d'_R,K_Hd'_S\rangle\le0.
\]
Both scalars vanish, so the equality case of the lemma shows that \(v,w\)
and \(P,Q\) are twins. Hence $N_{F_2(G)}(vw)\subseteq E(G)$.
Its image satisfies the same inclusion, so \eqref{eq:marked-neighborhood}
shows that $V,W$ are also twins.
If $PQ=XY$, then $RS=VW$. The equality
$N_H(R)\cap N_H(S)=\{P,Q\}$ becomes
$N_H(V)\cap N_H(W)=\{X,Y\}$, while the earlier equality is
$N_H(X)\cap N_H(Y)=\{V,W\}$. Both target diagonal pairs are
nonadjacent twins, so these two equalities give
\[
 N_H(V)=N_H(W)=\{X,Y\},\qquad
 N_H(X)=N_H(Y)=\{V,W\}.
\]
They form an isolated 4-cycle, contrary to connectedness and $n\geq5$.
Hence $PQ=VW$, giving the correspondence in the statement.
\end{proof}

\begin{theorem}\label{thm:marked-map-normalization}
Let $G,H$ be connected finite simple graphs of common order $n\geq5$,
and let $\phi:F_2(G)\to F_2(H)$ be a graph isomorphism with $\phi(E(G))=E(H)$. There are an
isomorphism $\sigma:G\to H$ and a product $\eta$ of commuting component
switches of $G$, chosen from the original $\phi$, such that
\begin{equation}
 \phi\eta^{-1}(e)=\sigma^{(2)}(e)\qquad(e\in E(G)).
\end{equation}
\end{theorem}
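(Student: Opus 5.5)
We view \(\phi\) restricted to \(E(G)\) as a bijection \(E(G)\to E(H)\) and aim to make it a line-graph isomorphism induced by a vertex map, so that Whitney's theorem applies.

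\textbf{Line-graph adjacency.} Two edges \(vx,vy\in E(G)\) sharing an endpoint are adjacent in \(F_2(G)\) exactly when \(xy\in E(G)\). So adjacency in \(F_2(G)\) only detects incident pairs closing a triangle, not general incidence in \(L(G)\). We therefore read incidence from the images. Either \(\phi(vx)\) and \(\phi(vy)\) meet, or, by the preceding lemma, they are disjoint. In the disjoint case the whole configuration on \(\{v,w,x,y\}\) is forced: \(v,w\) is a twin pair with \(N_G(x)\cap N_G(y)=\{v,w\}\), and the images are crossed as \(\phi(vx)=WX\), \(\phi(wx)=VX\), \(\phi(vy)=VY\), \(\phi(wy)=WY\). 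Since \(x,y\) have \(v,w\) as their only common neighbours and \(xy\notin E(G)\), the twin pair \(\{v,w\}\) is a twin 2-cut separating \(x\) from \(y\), or at least \(x\) and \(y\) lie in different components of \(G-\{v,w\}\). I expect to prove this by showing that a path from \(x\) to \(y\) avoiding \(v,w\) would propagate the crossing along consecutive edges and contradict injectivity or the lemma. This is the first point needing care.

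\textbf{Choosing \(\eta\) and defining \(\sigma\).} For each \(S=\{a,b\}\in\mathcal Q_2(G)\), the behaviour of \(\phi\) on \(\{ax,bx\}\) for \(x\) in a component \(C\) is either straight or crossed relative to a fixed reference component. I would show this is constant on each component, using adjacency \(ax\sim ax'\) for \(xx'\in E(C)\) and again the lemma. I then let \(\eta\) be the product of \(\tau_{S,C}\) over the crossed pairs \((S,C)\). These switches commute by Lemma~\ref{lem:cut-compatibility}, preserve \(E(G)\) by Corollary~\ref{cor:component-switches}, and fix stars outside \(S\) by \eqref{eq:outside-star-invariance}. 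Hence \(\psi=\phi\eta^{-1}\) still maps \(E(G)\) onto \(E(H)\), and has no disjoint-image incident pairs. Two things must be checked here: correcting one cut does not create a new crossing at another, which follows from the disjointness of distinct cuts, and the excluded \(C_4\) case does not occur since \(n\ge5\).

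Once \(\psi\) maps incident edges to incident edges, it preserves incidence in both directions, applying the same argument to \(\psi^{-1}\). It is then an isomorphism \(L(G)\to L(H)\). I then define \(\sigma(v)\) as the common vertex of \(\psi(\delta_G(v))\). Each edge-star image pairwise intersects, so it is a star or a triangle. I would exclude triangles using the token structure: edges \(vx,vy,vz\) with pairwise-meeting images forming a triangle are incompatible with \(\phi(E(G))=E(H)\) and the non-adjacency or adjacency pattern of \(vx,vy\) in \(F_2\). Alternatively, Whitney's theorem handles this directly, with the small exceptions \(K_3\), \(K_{1,3}\) and the \(K_4\)-type exceptions ruled out by \(n\ge5\) and connectivity. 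Whitney then yields \(\sigma\) with \(\psi(e)=\sigma^{(2)}(e)\) on \(E(G)\).

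\textbf{Main obstacle.} The consistency step is the hardest part: showing that the crossing pattern at a twin cut is uniform on each component, so that a single component switch removes it. My first attempt would use induction along paths in \(C\), applying the preceding lemma to each pair of incident edges \(ax,ax'\).
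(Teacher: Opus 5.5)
\textbf{There is a genuine gap, at the step you yourself flag as the main obstacle.} Your overall plan matches the paper's: detect crossings with the preceding lemma, undo them by component switches, then apply Whitney. The problem is that the method you propose for constancy of the orientation on a component cannot work.

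Applying the preceding lemma to incident edges $ax,ax'$ only compares orientations at two \emph{adjacent} vertices of $N_S=N_G(a)\setminus\{b\}$. A component $C$ of $G-S$ can join vertices of $N_S$ only through vertices not adjacent to $S$. For example, take twins $a,b$ with $N_S=\{x,x',y\}$, where $y$ is isolated in $G-S$ and $C$ is the path $x-u_1-u_2-x'$.
\begin{itemize}
\item The configurations $au_1,bu_1,au_2,bu_2$ are nonedges, and the lemma says nothing about their images.
\item If $\phi(ax),\phi(ax')$ were disjoint, every conclusion of the lemma would hold consistently: $xx'\notin E(G)$, $N_G(x)\cap N_G(x')=\{a,b\}$, and $a,b$ are twins. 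So the lemma by itself does not rule out opposite orientations at $x,x'$.
\end{itemize}
Path induction therefore has nothing to propagate with. Your first point, that $S$ separates $x$ from $y$, is the same statement and fails for the same reason. The inference ``only common neighbours $v,w$ and $xy\notin E(G)$, hence separated'' is not valid on its own.

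\textbf{A second gap.} You never show that $\{\phi(vz),\phi(wz)\}=\{Vf(z),Wf(z)\}$ for one fixed target twin pair $\{V,W\}$ and every $z\in N_S$. Without this, ``straight or crossed'' is not even defined. The paper obtains it as follows:
\begin{itemize}
\item $\phi(vw)=VW$ and $vz\sim vw$, so $\phi(vz)$ contains exactly one of $V,W$.
\item A reference $a\in\{x,y\}$ is chosen with $\phi(va)$ disjoint from $\phi(vz)$; this forces $\phi(az)=AZ$.
\item The common-neighbour sets $\{va,wa,vz,wz\}$ of $vw,az$ and $\{PA,QA,QZ,PZ\}$ of $VW,AZ$ correspond under $\phi$, which pins down $\phi(wz)$.
\end{itemize}

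\textbf{The missing idea: a global, sign-based propagation.} Because $v,w$ are twins, Lemma~\ref{lem:difference-vectors} makes $W^-_{v,w}$ invariant under $T$, with $Td_z=\sum_i B_{iz}d_i$. Hence
\[
\langle d_z,T^kd_{z'}\rangle=2(B^k)_{zz'}.
\]
With $\phi(d_z)=(-1)^{\eps(z)}d'_{f(z)}$, orthogonality of $\phi$ and $\phi T_G=T_H\phi$ give
\[
(B_G^k)_{zz'}=(-1)^{\eps(z)+\eps(z')}(B_H^k)_{f(z)f(z')}.
\]
Take $k=\dist_{G-S}(z,z')$. The left side is then positive, while $B_H^k$ is entrywise nonnegative, so $\eps(z)=\eps(z')$. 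This sees through vertices of $C$ that are not adjacent to $S$. Since every component meets $N_S$ by twinness, the orientation is constant on each component. Because the bits at $x,y$ differ, it also shows that $S$ is a twin 2-cut.

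\textbf{The remaining steps are fine.} These are commutation of the switches, invariance of stars outside the cuts, and Whitney at $n\ge5$. For incidence in the reverse direction, the paper uses $d_{L(Y)}(e)=d_{F_2(Y)}(e)$. Your route through $\psi^{-1}$ also works: a disjoint-image incident pair for $\psi^{-1}$ yields, via the lemma, one for $\psi$.
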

\begin{proof}
Suppose \(\phi(\delta_G(v))\) is not pairwise intersecting; choose disjoint
edges \(\phi(vx),\phi(vy)\) in this image.
By the preceding lemma, $S=\{v,w\}$ and $S'=\{V,W\}$ are twin pairs,
$\phi(vw)=VW$, and the pairs indexed by $x,y$ have opposite orientations.
Put $N_S=N_G(v)\setminus\{w\}$ and define $N_{S'}$ similarly;
these are the common neighbourhoods outside the respective twin pairs.

We first recover the image pair \(\phi(vz),\phi(wz)\). Take distinct \(a,z\in N_S\)
with \(\phi(va)=PA\), \(\phi(wa)=QA\), \(\{P,Q\}=S'\), and
\(\phi(vz)\) disjoint from \(PA\).
Since \(vz\sim vw\) and its image is disjoint from \(PA\), that image
is \(QZ\) for a unique \(Z\in N_{S'}\setminus\{A\}\).
The two disjoint images are nonadjacent, so \(az\) is a nonedge.
The disjoint image pair has codegree two, as in the preceding proof,
because \(\phi(az)\notin E(H)\) is a common neighbour.
The shared-vertex formula and twinness therefore give
\(N_G(a)\cap N_G(z)=\{v,w\}\).
Since the target vertices are twins, the cross edges \(PZ,QA\) are present.
Thus the two common neighbours of \(PA,QZ\) are \(VW,AZ\).
As \(az\ne vw\) and \(\phi(vw)=VW\), injectivity rules out
\(\phi(az)=VW\), and therefore \(\phi(az)=AZ\). Both cross matchings are present for the disjoint
configurations in each of the following pairs, so \eqref{eq:disjoint} gives their
complete common-neighbour sets:
\[
\begin{aligned}
 N_{F_2(G)}(vw)\cap N_{F_2(G)}(az)&=\{va,wa,vz,wz\},\\
 N_{F_2(H)}(VW)\cap N_{F_2(H)}(AZ)&=\{PA,QA,QZ,PZ\}.
\end{aligned}
\]
The isomorphism maps the first set onto the second. Its three known
images therefore force \(\phi(wz)=PZ\).

The images of the pairs \(vx,wx\) and \(vy,wy\) are already known.
For any \(z\in N_S\setminus\{x,y\}\), the image \(\phi(vz)\) is adjacent
to \(VW\). Choose \(a\in\{x,y\}\) so that \(\phi(va)\) and \(\phi(vz)\)
contain different members of \(\{V,W\}\). Their other endpoints cannot agree,
since that would give \(\phi(vz)=\phi(wa)\), contrary to injectivity. Thus the images are disjoint, and the preceding argument
determines the pair \(\phi(vz),\phi(wz)\).
Define \(f:N_S\to N_{S'}\) by
\[
 \{\phi(vz),\phi(wz)\}=\{Vf(z),Wf(z)\}\qquad(z\in N_S).
\]
The preceding pairing determines \(f(z)\) uniquely. Put \(\eps(z)=0\)
if \(\phi(vz)=Vf(z)\), and \(\eps(z)=1\) otherwise.
For \(z\in N_S\) and \(Z\in N_{S'}\), put
\(d_z=e_{vz}-e_{wz}\) and \(d'_Z=e_{VZ}-e_{WZ}\). Then
\(\phi(d_z)=(-1)^{\eps(z)}d'_{f(z)}\).
Apply Lemma~\ref{lem:difference-vectors} with distinguished vertices
\(v,w\) in \(G\) and \(V,W\) in \(H\), and write \(B_G,B_H\) for
the resulting matrices. Since both pairs are twins, the remainders vanish.
For every nonnegative integer \(k\), induction in \eqref{eq:root-difference}
and orthogonality of the difference vectors give
\(\langle d_z,T_G^k d_{z'}\rangle=2(B_G^k)_{zz'}\), with the analogous
identity for \(H\). Since \(\phi T_G=T_H\phi\) and \(\phi\) is orthogonal,
for \(z,z'\in N_S\) and every nonnegative integer \(k\),
\[
 (B_G^k)_{zz'}=(-1)^{\eps(z)+\eps(z')}
                  (B_H^k)_{f(z)f(z')}.
\]
If \(z,z'\) lie in the same component of \(G-S\), choose
\(k=\dist_{G-S}(z,z')\). A shortest path contributes a positive term to
\((B_G^k)_{zz'}\), while \(B_H^k\) is entrywise nonnegative.
The identity therefore implies \(\eps(z)=\eps(z')\).

Every component \(C\) of \(G-S\) meets \(N_S\): a path into \(S\)
ends at a vertex adjacent to one of \(v,w\), hence to both by twinness.
Thus \(\eps_S(C)=\eps(z)\), for \(z\in V(C)\cap N_S\), is well-defined.
Since the bits at \(x,y\) differ, these vertices lie in different
components. Thus \(S\) is a twin 2-cut. Define
\[
 \theta_S=\prod_{C\in\pi_0(G-S)}\tau_{S,C}^{\eps_S(C)}.
\]
After these switches, every orientation bit is zero: the corrected edges
at \(v\) contain \(V\), and those at \(w\) contain \(W\).
If \(vw\) is an edge, the switches fix it and its image is \(VW\).
Thus the images of both corrected edge stars are pairwise intersecting.

Let \(\mathcal F\) be the distinct cuts obtained from vertices \(v\) for which \(\phi(\delta_G(v))\) is not pairwise intersecting,
choosing one such construction for each cut. They are disjoint by
Lemma~\ref{lem:cut-compatibility}.
Put $\theta=\prod_{S\in\mathcal F}\theta_S$; its factors commute by
Lemma~\ref{lem:cut-compatibility}. Fix $S\in\mathcal F$ and $u\in S$.
Every other selected cut is disjoint from $S$, so its switches preserve
$\delta_G(u)$ setwise by \eqref{eq:outside-star-invariance}.
Applying $\theta_S$ last therefore gives
\[
 \phi\theta(\delta_G(u))=\phi\theta_S(\delta_G(u)),
\]
which is pairwise intersecting by the local correction.
If $u$ belongs to no selected cut, every factor preserves $\delta_G(u)$
setwise. Its original image was pairwise intersecting, since otherwise
the construction would have supplied a selected cut containing $u$.
Hence $\psi=\phi\theta$ maps every edge star to a pairwise intersecting
family and preserves incidence. Since the switches preserve the edge set,
\(\psi|_{E(G)}:E(G)\to E(H)\) is a bijection.
For a graph \(Y\), let \(L(Y)\) have vertex set \(E(Y)\), with distinct
edges adjacent when incident. For \(Y=G,H\) and \(e=uv\in E(Y)\),
\eqref{eq:degrees} gives
\[
 d_{L(Y)}(e)=d_Y(u)+d_Y(v)-2=d_{F_2(Y)}(e).
\]
Since \(\psi\) preserves token degrees,
\(d_{L(G)}(e)=d_{L(H)}(\psi(e))\) for every \(e\in E(G)\).
Incidence preservation gives
\(\psi(N_{L(G)}(e))\subseteq N_{L(H)}(\psi(e))\).
By injectivity and the degree equality, these finite sets have the same
cardinality. They are therefore equal. Hence \(\psi|_{E(G)}\) is a line-graph isomorphism.
The graphs $G,H$ are connected and have order at least five, so neither
is among the exceptional roots in \cite[Table~1]{anderson}, all of which
have order at most four. Whitney's strong isomorphism theorem
\cite[Table~1 and Theorem~3.8]{anderson} therefore gives $\sigma:G\to H$
inducing this particular line-graph isomorphism.
Taking $\eta=\theta^{-1}$ proves the assertion.
\end{proof}
\section{Completion of the reconstruction theorem}
\begin{lemma}\label{lem:small-unique}
Connected graphs of orders three and four are determined by their two-token
graphs.
\end{lemma}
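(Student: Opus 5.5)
The plan is a direct finite check. I will use the two invariants of $F_2(G)$ supplied by the token identities: the edge count $(n-2)|E(G)|$ and the degree formula $d_{F_2(G)}(xy)=d_G(x)+d_G(y)-2A_G(x,y)$ from \eqref{eq:degrees}. For a given order $n\in\{3,4\}$, the edge count of $F_2(G)$ recovers $|E(G)|$. It therefore suffices to separate, by an isomorphism invariant of the token graph, any two connected roots of the same order with the same number of edges. I will use the multiset of token degrees for this.

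Order three has only the connected roots $P_3$ and $K_3$. They have $2$ and $3$ edges, so the token edge counts $2$ and $3$ already distinguish them. (One can also check directly that $F_2(P_3)\cong P_3$ and $F_2(K_3)\cong K_3$.)

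Order four has the connected roots $P_4$, $K_{1,3}$, $C_4$, the paw, the diamond and $K_4$. Their edge counts are $3,3,4,4,5,6$, so only two pairs need separating.

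\begin{itemize}
\item For $P_4=abcd$, the configuration $ab$ has token degree $1+2-2=1$. For $K_{1,3}$, every configuration has token degree $2$: a centre--leaf pair gives $3+1-2$ and a leaf--leaf pair gives $1+1$. So $F_2(P_4)$ has a vertex of degree one and $F_2(K_{1,3})$ does not.
\item For $C_4$, edges give degree $2+2-2=2$ and the two diagonals give degree $4$, so the token degree multiset is $\{2,2,2,2,4,4\}$. For the paw, with triangle $abc$ and pendant vertex $d$ attached to $c$, the degree formula gives $\{2,2,3,3,3,3\}$.
\end{itemize}

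These multisets differ, so in each pair the token graphs are nonisomorphic. This completes the case analysis.

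There is no real obstacle here. The only point needing care is to verify that the list of connected graphs on four vertices is complete and that each token degree is computed correctly, including the $-2A_G(x,y)$ correction on edges.
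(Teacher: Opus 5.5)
Your proof is correct and takes essentially the paper's route: the paper also settles order four by a finite check, which you carry out explicitly via the token edge count $2|E(G)|$ and the token degree multisets. At order three the paper instead uses the complementation identification $F_2(G)\cong F_1(G)=G$ rather than your edge count, and your reduction to a competing root that is connected and of the same order should cite the vertex count $\binom n2$ and Lemma~\ref{lem:connectivity}, as the paper's subsequent corollary does.
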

\begin{proof}
At order three, complementation identifies \(F_2(G)\cong F_1(G)=G\)
by \cite[Section~2]{tokens}.
At order four, it can be finitely checked.
\end{proof}

\begin{corollary}
If \(G\) is connected of order at least three and \(F_2(G)\cong F_2(H)\), then \(G\cong H\).
\end{corollary}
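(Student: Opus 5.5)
We assume that \(G\) is connected of order \(n\ge3\) and that \(F_2(G)\cong F_2(H)\), and we prove \(G\cong H\).

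\emph{Transfer of the basic invariants.} First we show that \(H\) has the same order as \(G\) and is also connected.

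1. The two-token graphs have \(\binom n2\) and \(\binom{|V(H)|}2\) vertices. Since \(m\mapsto\binom m2\) is injective for \(m\ge2\), we get \(|V(H)|=n\).
2. By Lemma~\ref{lem:connectivity}, \(F_2(G)\) is connected, so \(F_2(H)\) is connected.
3. The converse clause of Lemma~\ref{lem:connectivity} applies to any finite graph of order at least three. Hence \(H\) is connected.

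So \(G\) and \(H\) are connected graphs of the same order \(n\), and we split on \(n\).

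\emph{Small orders.} If \(n\in\{3,4\}\), Lemma~\ref{lem:small-unique} applies. Both \(G\) and \(H\) are connected of order \(n\), and they have isomorphic two-token graphs. The lemma says such graphs are determined by their two-token graphs, so \(G\cong H\).

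\emph{Orders \(n\ge5\).} Fix an isomorphism \(\phi:F_2(G)\to F_2(H)\).

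1. Apply Theorem~\ref{thm:edge-state-preservation} to \(\phi\), with the roles of source and target taken as stated there. This gives \(\phi(E(G))=E(H)\).
2. The hypotheses of Theorem~\ref{thm:marked-map-normalization} now hold: both graphs are connected, have common order \(n\ge5\), and \(\phi\) maps \(E(G)\) onto \(E(H)\).
3. That theorem produces an isomorphism \(\sigma:G\to H\), together with a product of component switches \(\eta\) that corrects \(\phi\) on \(E(G)\). Only the existence of \(\sigma\) is needed here, so \(G\cong H\).

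The main difficulty lies in the cited results, not in this assembly. Within this proof the only care needed is the connectivity transfer to \(H\). It uses the converse direction of Lemma~\ref{lem:connectivity}: if \(H\) were disconnected, then \(F_2(H)\) would be disconnected. That direction needs order at least three, which holds here.
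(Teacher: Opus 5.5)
Your proposal is correct and follows the paper's proof. Like the paper, you transfer the root order and connectivity to \(H\) via the vertex count \(\binom n2\) and Lemma~\ref{lem:connectivity}, then invoke Lemma~\ref{lem:small-unique} at orders three and four and Theorems~\ref{thm:edge-state-preservation} and~\ref{thm:marked-map-normalization} at order at least five.
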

\begin{proof}
Equality of the token orders implies equality of the root orders.
Lemma~\ref{lem:connectivity} also shows that \(H\) is connected.
For order at least five, Theorems~\ref{thm:edge-state-preservation}
and~\ref{thm:marked-map-normalization} give an isomorphism of the roots.
Orders three and four follow from Lemma~\ref{lem:small-unique}.
\end{proof}

\begin{lemma}\label{lem:prime}
For a connected finite simple graph \(G\) of order at least three,
\(F_2(G)\) is Cartesian-prime.
\end{lemma}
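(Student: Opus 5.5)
We would argue by contradiction through the standard characterisation of Cartesian-prime graphs by the product relation. Recall (Imrich--Klavžar) that for a connected graph \(Y\), let \(\sigma_Y\) be the smallest equivalence on \(E(Y)\) that relates two edges whenever they are adjacent and lie in no common induced 4-cycle (in particular, two edges of a triangle), or are opposite edges of an induced 4-cycle. Then the \(\sigma_Y\)-classes are unions of the fibre-edge sets of every Cartesian decomposition of \(Y\). Hence \(Y\) is Cartesian-prime as soon as \(\sigma_Y\) has a single class. By Lemma~\ref{lem:connectivity}, \(X=F_2(G)\) is connected. Order three can be treated directly, since complementation gives \(X\cong G\in\{P_3,K_3\}\), and both are prime. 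So the task is to show that \(\sigma_X\) is transitive on \(E(X)\).

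First we label the edges of \(X\). Every token edge \(sx\sim sy\) is a move of one token along a root edge \(xy\), with the other token stationary at \(s\); write it as \((xy;s)\). By Lemma~\ref{lem:token-square-types} together with \eqref{eq:disjoint}, the 4-cycles of \(X\) come in two types. In the rectangle type \((xr,xs,ys,yr)\), the edges \((rs;x)\) and \((rs;y)\) are opposite whenever \(xy\in E(G)\), and likewise \((xy;r)\) and \((xy;s)\). The other type keeps one token fixed. From this we get two relations.

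\emph{Step 1 (same root edge, varying stationary token).} For a fixed root edge \(rs\), the edges \((rs;x)\) and \((rs;y)\) are \(\sigma_X\)-related whenever \(x,y\notin\{r,s\}\) are adjacent. The rectangle \((xr,xs,ys,yr)\) needs to be induced. Its diagonals would be edges \(xr\sim ys\), and these are disjoint configurations, so the rectangle is automatically induced.

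\emph{Step 2 (same stationary token, varying root edge).} Fix \(s\) and consider moves of the other token inside \(G-s\).
\begin{itemize}
\item If \(x,y,z\) form a triangle avoiding \(s\), the triangle \(sx,sy,sz\) relates its three moves.
\item If \(xyz\) is an induced path, the edges \((xy;s)\) and \((yz;s)\) are adjacent at \(sy\). They lie in a common induced 4-cycle only if there is a second common neighbour \(w\) of \(x,z\), giving the cycle through \(sw\), or if \(s\) is adjacent to both \(x\) and \(z\), giving the cycle through \(xz\). When such a 4-cycle exists, its opposite edges supply the relation instead: \((xy;s)\) is related to \((zw;s)\) or to \((zs;x)\).
\end{itemize}
We therefore expect a short case analysis to show that any two moves that are consecutive along a path of \(G\) end up in the same class, whichever alternative occurs.

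\emph{Step 3 (connecting everything).} Combining Steps 1 and 2, every move \((xy;s)\) should be related to every move \((x'y';s')\). We would walk the moving token along a path in \(G\) while keeping \(s\) fixed (Step~2). Then we would transport the stationary token along an edge, using Step~1 or the mixed opposite pairs \((xy;s)\leftrightarrow(zs;x)\) from the rectangles. Connectivity of \(G\) is what guarantees such walks exist. Where a walk is forced through the stationary vertex, we would switch roles via the rectangle identity, which exchanges which token counts as moving.

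The main obstacle is Step 2 in 4-cycle-rich roots (for example \(G=C_4\), or roots with many induced squares). There, adjacent moves may always lie in some square, so no "non-square" relation is available, and everything must be forced through opposite-edge relations. Our first attempt would be to show directly that the opposite-edge relations already link \((xy;s)\) and \((yz;s)\) through the fourth corner of the square, as sketched above. If that fails in some configuration, we would fall back to a counting argument on the hypothesised factors \(A\square B\). By Lemma~\ref{lem:token-square-types}, every vertex \(sx\) of \(X\) with \(d_G(s),d_G(x)\ge1\) would need edges of both factors meeting in unique squares. We would compare this with the degree formula \eqref{eq:degrees} to exclude the small exceptional roots one by one.
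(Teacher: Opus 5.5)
\textbf{There is a genuine gap.} Your reduction is valid in the direction you actually need. In any \(A\square B\), opposite edges of a 4-cycle have the same factor direction, and so do two incident edges lying in no common induced 4-cycle. Hence a single class of the closure forces primality. (The classes lie \emph{inside} factor directions; they need not be unions of fibre-edge sets of an arbitrary decomposition.) Step~1 and the triangle case are also correct.

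The proof stops exactly where its content lies. When \((xy;s)\) and \((yz;s)\) lie in a common induced square, that square's opposite-edge relations pair each of them with a \emph{different} edge, never with each other. For instance, \((xy;s)\) is paired with \((zw;s)\), or with \((sx;z)\) rather than \((zs;x)\), which is adjacent to it. So you need a chain that leaves the square, and none is given. ``We expect a short case analysis'' and the unspecified counting fallback are not arguments.

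Step~3 has the same problem. Step~1 only transports the stationary token along root edges avoiding the moving edge's endpoints. Opposite edges of a rectangle \((xr,xs,ys,yr)\) always move along the same root edge, so rectangles never supply the promised ``role switch''.

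Moreover, your claim that the 4-cycles of \(X\) come in two types is false. Lemma~\ref{lem:token-square-types} only covers 4-cycles in which both opposite pairs have codegree two. If \(x,y\) are nonadjacent with common neighbours \(v,w\), then \((vx,xy,vy,vw)\) is an induced 4-cycle of \(X\) of neither type, since the pair \(xy,vw\) has codegree four. Your own square \((sx,sy,sz,xz)\) is another such cycle. These squares are precisely what stop your relation from linking \(xy\)--\(vx\) with \(xy\)--\(vy\) directly. So the intrinsic induced-square relation is too weak locally in square-rich roots, and your bookkeeping of which pairs are directly related is unreliable.

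\textbf{The missing idea} is to work with the hypothesised factorization \(X=Y_1\square Y_2\), with edges coloured red and blue by factor direction, and to use the unique-square property that your relation discards. Two incident edges of different colours lie in exactly one 4-cycle, and in it each opposite pair has exactly the other two corners as common neighbours. So every mixed coordinate square satisfies the hypothesis of Lemma~\ref{lem:token-square-types}.

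For a 2-edge path \(x\)--\(v\)--\(y\) in \(G\), the configurations \(vx,xy,vy\) pairwise intersect but share no vertex. Hence they cannot be three corners of either allowed type. Thus \(xy\)--\(vx\) and \(xy\)--\(vy\) have the same colour, whatever extra squares exist.

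The paper then closes the argument locally:
\begin{enumerate}
\item Call that colour red, and write a blue neighbour of \(xy\) as \(xz\).
\item The mixed square at \(vx,xy,xz\) has fourth corner \(xt\), with \(vt,tz\in E(G)\).
\item The mixed square at \(vy,xy,xz\) has fourth corner \(vz\).
\item So \(xt\) and \(vz\) are red neighbours of \(xz\) in one red fibre.
\item Their common neighbour \(vt\) makes \(xt\)--\(vt\) red, while \(xt\)--\(vx\) is blue.
\end{enumerate}
This contradicts the same-colour rule for the path \(x\)--\(v\)--\(t\).
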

\begin{proof}
By Lemma~\ref{lem:connectivity}, \(X=F_2(G)\) is connected.
Suppose \(X=Y_1\square Y_2\) with nontrivial factors, and colour edges
red or blue by their factor direction. Both factors are connected,
so each configuration has an incident edge of each colour. In a mixed
coordinate square, the common neighbours of either opposite pair are exactly
the other two corners.

For a 2-edge path \(x-v-y\) in \(G\), the edges \(xy\sim vx,vy\)
have the same colour. Otherwise their coordinate square would satisfy
Lemma~\ref{lem:token-square-types}. The three configurations have no common
vertex of \(G\), so the first alternative is impossible. Every pair
intersects, whereas three corners of a rectangle include a disjoint opposite
pair. The second alternative is also impossible.

Choose such a path, possible by connectedness and order at least three,
and call its colour red. A blue neighbour of \(xy\) can be written \(xz\)
after exchanging \(x,y\), with \(z\notin\{x,y,v\}\): the choice \(z=v\)
would be the red neighbour \(vx\).
The mixed square through \(vx,xy,xz\) has three corners containing \(x\),
so Lemma~\ref{lem:token-square-types} gives fourth corner \(xt\),
with \(t\notin\{x,y,v,z\}\), \(vt,tz\in E(G)\), and \(vx\sim xt\) blue.
The edges \(vx,yz\) of \(G\) make \(vz\) the other common neighbour of
\(vy,xz\), completing the second mixed square.
Since opposite edges have the same colour, \(xt,vz\) are distinct red
neighbours of \(xz\) and lie in one red fibre. Any common neighbour also
lies in that fibre. Otherwise its edges to both vertices would be blue,
requiring its red coordinate to equal two distinct red coordinates.
The edges \(vx,tz\) of \(G\) give the common neighbour \(vt\), so \(xt\sim vt\)
is red, while \(xt\sim vx\) is blue, contradicting the same-colour rule
for the 2-edge path \(x-v-t\) in \(G\).
\end{proof}

\begin{proof}[Proof of \textbf{Theorem~\ref{thm:all}}]
The number \(\binom n2\) of token vertices determines the root order
\(n\ge3\). If the token graph has no edges, then
\((n-2)|E(G)|=0\) makes \(G\) edgeless, and the same holds for
\(H\). Since the orders agree, the edgeless graphs are isomorphic.
Assume henceforth that \(G\) has an edge. Then \(H\) also has an edge,
by the token edge count.

\emph{Step 1: recover the numbers of components and isolates.}
For \(K\in\{G,H\}\), let \(C_1,\ldots,C_s\) be its connected
components, including singletons, and let \(\iota(K)\)
count the singletons. A sector with both tokens in \(C_i\) exists exactly
when \(|V(C_i)|\ge2\). It consists of one configuration when
\(|V(C_i)|=2\); when \(|V(C_i)|\ge3\), it is \(F_2(C_i)\) and is connected
by Lemma~\ref{lem:connectivity}. A sector with one token in each of
\(C_i,C_j\) is \(C_i\sq C_j\), as in \cite[Theorem~16 of the arXiv version]{tokens}, and is
connected by following paths in its two factors successively.
Token moves preserve component occupancies, so these sectors are exactly
the token components:
\begin{equation}
\begin{aligned}
 F_2(K)&\cong\bigsqcup_{i:\,|V(C_i)|\ge3} F_2(C_i)
       \ \sqcup\ \bigsqcup_{i:\,|V(C_i)|=2} K_1
       \ \sqcup\ \bigsqcup_{i<j}(C_i\sq C_j),\\
 |\pi_0(F_2(K))|&=\binom{s+1}{2}-\iota(K).
\end{aligned}
\end{equation}
There are \(s-\iota(K)\) internal sectors and \(\binom s2\) mixed sectors. Since an edge exists in the root graph,
\(0\le\iota(K)\le s-1\), giving
\[
 \binom s2<|\pi_0(F_2(K))|\le\binom{s+1}2.
\]
For positive integers \(s\), these intervals are disjoint and consecutive.
Thus the number of token components determines \(s\), and the equality
above then determines \(\iota(K)\).
In particular \(G,H\) have the same number of isolates, say \(\iota\).

\emph{Step 2: recover components of order at least three.}
For a connected graph \(\Gamma\) of order at least three, write
\(\mu_K(\Gamma)\) for the multiplicity of that component type in
\(K\). We claim
\begin{equation}
 \mu_{F_2(K)}(F_2(\Gamma))
 =\mu_K(\Gamma)+\iota(K)\mu_K(F_2(\Gamma)).
\end{equation}
Since \(F_2(\Gamma)\) has at least three vertices, sectors consisting
of one configuration cannot contribute. By connected uniqueness, each other internal component contributes
precisely when its root is isomorphic to \(\Gamma\). A mixed component with both
factors of order at least two cannot contribute, since
\(F_2(\Gamma)\) is Cartesian-prime by Lemma~\ref{lem:prime}.
A mixed component with one singleton factor is just its other factor;
it contributes precisely when that component of the root graph has type
\(F_2(\Gamma)\), once for each isolate. These possibilities
exhaust the occupancy decomposition and prove the formula.

The token isomorphism equates the left sides for \(G,H\). Put
\(\Delta\mu=\mu_G-\mu_H\). Then for every such \(\Gamma\),
\[
 \Delta\mu(\Gamma)+\iota\,\Delta\mu(F_2(\Gamma))=0.
\]
If a multiplicity difference is nonzero, choose a component type of
largest order with this property. This is possible because only finitely
many component types occur in the two roots. At order \(r\ge4\), the graph \(F_2(\Gamma)\) has
\(\binom r2>r\) vertices. Its multiplicity difference is zero by
maximality, forcing \(\Delta\mu(\Gamma)=0\), a contradiction.
At order three, the omitted-vertex identification in
Lemma~\ref{lem:small-unique} gives \(F_2(\Gamma)\cong\Gamma\).
The equation becomes \((1+\iota)\Delta\mu(\Gamma)=0\), again
impossible for a nonzero integer difference.
Thus all component multiplicities of order at least three agree.

\emph{Step 3: recover the remaining components.}
The isolates already agree. Subtract their number and the total number
of vertices in all larger components from the common root order \(n\).
The remaining vertices occur in components of type \(K_2\), so half
this remainder determines their multiplicity. Every connected component
type now has the same multiplicity in \(G,H\), proving \(G\cong H\).
\end{proof}

\section{Automorphisms fixing \texorpdfstring{$E(G)$}{E(G)} pointwise}

Let \(G\) be a finite connected simple graph of order at least three, and put
\(X=F_2(G)\). Write \(\mathcal K_E(G)\) for the pointwise stabilizer
of \(E(G)\) in \(\Aut(X)\). It is the kernel of the permutation action
of the setwise stabilizer of \(E(G)\) on \(E(G)\). Define the distance-vector map
\[
  D(v)=\bigl(\dist_G(v,t)\bigr)_{t\in V(G)}.
\]
It is injective: the coordinate indexed by \(v\) vanishes only at \(v\).

\begin{lemma}\label{lem:edge-state-data}
Let \(\alpha\in\mathcal K_E(G)\). If \(\alpha(xy)=zw\), then
\begin{equation}\label{eq:edge-state-distance-sum}
  D(x)+D(y)=D(z)+D(w).
\end{equation}
If \(\dist_G(x,y)=2\), then either \(xy\) is fixed or \(\alpha\) exchanges
\(xy\) with \(zw\), where the four vertices are distinct, \(xy,zw\) are
nonedges, and
\begin{equation}\label{eq:edge-state-square-profile}
  N_G(x)\cap N_G(y)=\{z,w\},\qquad
  N_G(z)\cap N_G(w)=\{x,y\}.
\end{equation}
\end{lemma}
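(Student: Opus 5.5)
The plan is to use that \(\alpha\) fixes every edge star \(\delta_G(t)\subseteq E(G)\) pointwise, so it preserves distances in \(X\) to each such star. Then Lemma~\ref{lem:connectivity} converts these token distances into root distances, which gives \eqref{eq:edge-state-distance-sum}. The distance-two statement should follow from evaluating that identity at the coordinates \(x\) and \(y\), combined with the marked-neighbourhood description \eqref{eq:marked-neighborhood}.

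\emph{Step 1: the distance-sum identity.} Fix \(t\in V(G)\). Since \(\alpha\) is an automorphism of \(X\) with \(\alpha(\delta_G(t))=\delta_G(t)\) (indeed pointwise), we have \(\dist_X(\alpha(P),\delta_G(t))=\dist_X(P,\delta_G(t))\) for every configuration \(P\). Lemma~\ref{lem:connectivity} applied to \(P=xy\) and \(\alpha(P)=zw\) gives \(\dist_G(x,t)+\dist_G(y,t)-1=\dist_G(z,t)+\dist_G(w,t)-1\). Ranging over \(t\) yields \eqref{eq:edge-state-distance-sum}.

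\emph{Step 2: the distance-two case.} Assume \(\dist_G(x,y)=2\). I will read \eqref{eq:edge-state-distance-sum} at coordinate \(t=x\), which gives \(\dist_G(z,x)+\dist_G(w,x)=2\), and at \(t=y\) likewise.
\begin{itemize}
\item If one of \(z,w\) equals \(x\), say \(z=x\), then the identity gives \(D(w)=D(y)\). Injectivity of \(D\) then gives \(w=y\), so \(xy\) is fixed. The same holds if \(z\) or \(w\) equals \(y\).
\item Otherwise both distances to \(x\) are positive with sum two, so both equal one, and likewise for \(y\). Hence \(z,w\) are distinct common neighbours of \(x,y\), and all four vertices are distinct.
\end{itemize}
Because \(\alpha\) permutes \(E(G)\), it also permutes the nonedges. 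Since \(xy\) is a nonedge, \(zw\) is a nonedge too.

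\emph{Step 3: the square profile.} Since \(\alpha\) fixes \(E(G)\) pointwise, it maps \(N_X(xy)\cap E(G)\) onto \(N_X(zw)\cap E(G)\) as the identity. By \eqref{eq:marked-neighborhood}, this gives the set equality
\[
 \{xu,yu:u\in N_G(x)\cap N_G(y)\}=\{zu',wu':u'\in N_G(z)\cap N_G(w)\}.
\]
Every configuration on the right contains \(z\) or \(w\). So if \(u\in N_G(x)\cap N_G(y)\) with \(u\notin\{z,w\}\), then \(xu\) would contain neither \(z\) nor \(w\), which is impossible. Hence \(N_G(x)\cap N_G(y)=\{z,w\}\), and the symmetric argument gives \(N_G(z)\cap N_G(w)=\{x,y\}\).

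\emph{Step 4: the exchange.} In particular \(\dist_G(z,w)=2\), so Steps 2 and 3 apply to \(zw\). Thus \(\alpha(zw)\) is either \(zw\) itself or the unique pair \(\{x,y\}\) determined by the profile \(N_G(z)\cap N_G(w)=\{x,y\}\). The first option is excluded because \(\alpha(xy)=zw\) and \(xy\ne zw\), so injectivity forces \(\alpha(zw)=xy\).

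I expect the only delicate point to be the bookkeeping in Step 3, namely checking that the configuration-level set equality really forces equality of the common-neighbour sets rather than merely equal cardinalities. The pigeonhole observation on which vertices each configuration contains should settle it.
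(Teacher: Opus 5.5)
Your proof is correct. Step 1 is the paper's argument verbatim, but for the distance-two case you take a different route.

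The paper does not use the distance identity there. It reads $N_X(xy)\cap E(G)$, via \eqref{eq:marked-neighborhood}, as the edge set of the connected complete bipartite graph $K_{\{x,y\},C}$ with $C=N_G(x)\cap N_G(y)$. It notes that $\alpha(xy)$ is a nonedge with the same marked neighbourhood, and invokes uniqueness of the bipartition of a connected bipartite graph. So the endpoints of $\alpha(xy)$ are either $\{x,y\}$ or the other part $C$, which must then have two elements. This yields the dichotomy and both profile equalities at once. The exchange follows because exactly these two nonedges have that marked neighbourhood.

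You instead evaluate \eqref{eq:edge-state-distance-sum} at the coordinates $t=x$ and $t=y$ and use injectivity of $D$ to obtain the dichotomy first. You then use the marked-neighbourhood equality only for a vertex-containment argument that upgrades $\{z,w\}\subseteq N_G(x)\cap N_G(y)$ to equality. Finally, you rerun Steps 2 and 3 on $zw$ to get the exchange.

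Your version is concrete and reuses Step 1. However, it relies on global distance information (connectedness and Lemma~\ref{lem:connectivity}) for what is really a local fact. The paper's argument for the second assertion is independent of the distance identity and of connectedness. It also obtains the exchange without a second pass.
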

\begin{proof}
Every configuration of \(\delta_G(t)\) is fixed by \(\alpha\), so
Lemma~\ref{lem:connectivity} gives \eqref{eq:edge-state-distance-sum}
for each coordinate \(t\).

For a nonedge \(xy\) at distance two, put
\(C=N_G(x)\cap N_G(y)\ne\varnothing\). By
\eqref{eq:marked-neighborhood}, its neighbours in \(E(G)\), viewed
as edges of \(G\), form the connected complete bipartite graph
\(K_{\{x,y\},C}\). Since \(\alpha\) fixes \(E(G)\) pointwise and is a
bijection, it cannot send \(xy\notin E(G)\) into \(E(G)\).
Thus \(\alpha(xy)\notin E(G)\) has the same neighbours in \(E(G)\),
and its endpoints form a two-element part of the same bipartite graph.
A connected bipartite graph has a unique bipartition up to exchange,
since choosing the part of one vertex determines the others along paths.
If the configuration moves, its image must therefore be \(zw\), with
\(C=\{z,w\}\). Equality of the neighbour sets in \(E(G)\) gives
\eqref{eq:edge-state-square-profile}. Exactly these two nonedges have that
neighbour set in \(E(G)\), so \(\alpha\) exchanges them.
\end{proof}

\begin{lemma}\label{lem:edge-state-extension}
Let \(G\) be a finite connected simple graph of order at least three, and let
\(\alpha,\beta\in\Aut(F_2(G))\). If \(\alpha(xy)=\beta(xy)\)
for every configuration \(xy\) with \(\dist_G(x,y)\le2\), then \(\alpha=\beta\).
\end{lemma}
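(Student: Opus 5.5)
Replace $\alpha,\beta$ by $\gamma=\beta^{-1}\alpha$. Then $\gamma$ fixes every configuration $xy$ with $\dist_G(x,y)\le2$, and it suffices to prove $\gamma=\mathrm{id}$. I would argue by induction on $k=\dist_G(x,y)$, showing that $\gamma$ fixes every configuration at distance $k$ once it fixes all configurations at distance less than $k$. The base cases $k\le2$ are the hypothesis. Distances are finite because $G$ is connected.

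For the inductive step, take $xy$ with $\dist_G(x,y)=k\ge3$ and a shortest path $x=p_0,p_1,\ldots,p_k=y$. The key observation is that $xy$ should be the unique configuration with prescribed adjacencies to a few configurations already known to be fixed. Take the token neighbours $P=p_1y$ and $Q=xp_{k-1}$. They are adjacent to $xy$ and have distances $k-1$, so by induction both are fixed. Hence $\gamma(xy)$ is a common neighbour of $P$ and $Q$.

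Next I would compute $N_X(P)\cap N_X(Q)$ directly. Since $k\ge3$, the vertices $p_1,y,x,p_{k-1}$ are distinct, so $P$ and $Q$ are disjoint. By \eqref{eq:disjoint}, their common neighbours are the configurations obtained through the two cross matchings $p_1x,\,yp_{k-1}$ and $p_1p_{k-1},\,yx$. The first matching gives exactly the common neighbours $xy$ and $p_1p_{k-1}$. The second would need $p_1p_{k-1}\in E(G)$ and $xy\in E(G)$, which is impossible since $\dist(x,y)\ge3$. So $\{\gamma(xy),\gamma(p_1p_{k-1})\}=\{xy,p_1p_{k-1}\}$.

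The configuration $p_1p_{k-1}$ has distance $k-2$. It is fixed by induction when $k-2\ge1$, which always holds here. When $k=3$ this configuration is the edge $p_1p_2$, with distance $1\le2$, so it is also covered. Since $\gamma$ is injective and fixes $p_1p_{k-1}$, it cannot send $xy$ there, and therefore $\gamma(xy)=xy$.

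The only delicate point is confirming the common-neighbour count and that the four vertices are distinct; $k\ge3$ guarantees both. One could worry that the lemma needs the full tower of distances, but the induction already supplies it. I expect no further obstacle, and the argument works for every order $n\ge3$ with $G$ connected, matching the hypotheses.
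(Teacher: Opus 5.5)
Your proof is correct and is essentially the paper's argument. Both take a shortest path $x=p_0,\ldots,p_k=y$ and use the two fixed disjoint configurations $p_1y$ and $xp_{k-1}$. Their only common neighbours are $xy$ and the inner pair $p_1p_{k-1}$, which is fixed by induction, so $xy$ is fixed too.
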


\begin{proof}
Put \(\gamma=\beta^{-1}\alpha\). It fixes every configuration \(xy\) with \(\dist_G(x,y)\le2\).
Induct on \(m=\dist_G(x,y)\ge3\), and choose a shortest path
\(x=x_0,x_1,\ldots,x_m=y\). The disjoint configurations \(x_1x_m\) and
\(x_0x_{m-1}\) are fixed by induction. Their common neighbours are exactly
\[
  \{x_0x_m,\ x_1x_{m-1}\}.
\]
Indeed one cross matching uses \(x_0x_1,x_{m-1}x_m\), while the other would
require the absent edge \(x_0x_m\). The inner pair has distance \(m-2\) and is fixed by induction.
Hence the other common neighbour \(x_0x_m\) is also fixed. Thus \(\gamma=1\), so \(\alpha=\beta\).
\end{proof}

\begin{lemma}\label{lem:edge-state-local-partition}
Let \(G\) be a finite connected simple graph of order at least three, and let
\(\alpha\in\mathcal K_E(G)\). At each vertex \(v\), its incident
edges have a unique unlabelled partition into at most two nonempty classes
such that, for distinct
neighbours \(a,b\) of \(v\), the configuration \(ab\) moves exactly when \(va,vb\)
belong to different classes. Whenever \(va,vb\) lie in different local
classes, there is a unique \(d\) such that \(\alpha\) exchanges \(ab,vd\). The vertices \(v,a,d,b\) form
the unique chordless 4-cycle through \(va,vb\), consecutive edges are
in different local classes at every corner, and
\begin{equation}
  D(v)+D(d)=D(a)+D(b).
\end{equation}
\end{lemma}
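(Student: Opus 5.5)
The plan is to reduce everything to Lemma~\ref{lem:edge-state-data} applied to pairs of neighbours of \(v\), and then to show that the relation ``\(ab\) moves'' on \(N_G(v)\) is the edge relation of a complete bipartite graph, possibly empty. Fix \(\alpha\in\mathcal K_E(G)\), a vertex \(v\), and distinct \(a,b\in N_G(v)\).

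\emph{Local exchange data.} If \(ab\in E(G)\), then \(ab\) is fixed, since \(\alpha\) fixes \(E(G)\) pointwise. Otherwise \(\dist_G(a,b)=2\), and Lemma~\ref{lem:edge-state-data} says that either \(ab\) is fixed or \(\alpha\) exchanges it with \(zw\), where \(N_G(a)\cap N_G(b)=\{z,w\}\) and \(N_G(z)\cap N_G(w)=\{a,b\}\). Since \(v\) is a common neighbour, \(\{z,w\}=\{v,d\}\) with \(d\) the unique other common neighbour. This gives uniqueness of \(d\) and the exchange \(ab\leftrightarrow vd\). Both \(ab\) and \(vd\) are nonedges, so \(v,a,d,b\) is a chordless 4-cycle. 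It is the unique one through \(va,vb\), because \(N_G(a)\cap N_G(b)=\{v,d\}\). The identity \(D(v)+D(d)=D(a)+D(b)\) is exactly \eqref{eq:edge-state-distance-sum} for \(\alpha(ab)=vd\).

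\emph{The partition.} Define \(a\approx b\) on \(N_G(v)\) when \(a=b\) or \(ab\) is fixed; the classes of \(va\) are then read off through \(a\). It suffices to show that for distinct \(a,b,c\in N_G(v)\) the number of moving pairs among \(ab,ac,bc\) is even. That makes \(\approx\) an equivalence relation, and forces at most two classes: if \(a,b,c\) were pairwise inequivalent, all three pairs would move. Uniqueness of the unlabelled partition is then automatic, since the classes are determined by which pairs move. I would argue by codegree transport using \eqref{eq:degrees} and \eqref{eq:disjoint}, since \(\alpha\) fixes all configurations \(va,vb,vc\). Suppose \(ab\) moves to \(vd\). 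Then \(\codeg_{F_2(G)}(ab,vc)=2A_G(b,c)+2A_G(a,c)\), while \(\codeg_{F_2(G)}(vd,vc)=\codeg_G(d,c)\), and these are equal. Adjacency transport gives \(ab\sim ac\iff vd\sim\alpha(ac)\), and \(ab\sim vc\) never holds, so \(vd\not\sim vc\) and hence \(dc\notin E(G)\).

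\emph{Main obstacle.} The hard case is excluding three pairwise moving neighbours, say \(ab\mapsto vd\), \(ac\mapsto vd'\), \(bc\mapsto vd''\), with \(d,d',d''\) distinct by injectivity. The codegree identities above give \(\codeg_G(d,c)=0\), \(dc\notin E(G)\), and the analogous facts for the other two. I would first try to produce a configuration, such as \(dd'\) or \(ad''\), whose codegree or adjacency with fixed edge configurations is forced two incompatible ways. If that fails, I would fall back on Lemma~\ref{lem:edge-state-data} applied to the distance-two pair \(\{d,c\}\), or \(\{d,d'\}\) through \(a\): its common neighbourhood must either be fixed or have exactly two elements, and the 4-cycle structure should contradict this. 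The same tools should give the evenness in the case of exactly one moving pair, \(ab\) moving while \(ac\) and \(bc\) are fixed.

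\emph{Alternating classes.} Once the partition exists at every vertex, consider the cycle \(v,a,d,b\). The pair \(va,vb\) lies in different classes at \(v\) by construction. At \(a\), the configuration \(vd\) moves (to \(ab\)), so \(av,ad\) lie in different classes at \(a\) by the defining property. The same holds at \(d\), because \(ab\) moves, and at \(b\), because \(vd\) moves. This gives the claim that consecutive edges are in different local classes at every corner.
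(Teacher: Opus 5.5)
\paragraph{Verdict.} There is a genuine gap: the parity identity is never proved.

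\paragraph{What is correct.} Your treatment of the exchange data is correct and agrees with the paper: the identification $\alpha(ab)=vd$, the unique chordless 4-cycle $v,a,d,b$, and the parallelogram identity. So is your reduction of the partition to a parity statement for three neighbours, and your corner-by-corner check that the classes alternate.

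\paragraph{The gap.} The parity identity is the substantive part of the lemma. Once $ab$ moves, you correctly isolate the two cases to exclude: both $ac,bc$ move, or both are fixed. For these you only list things you would try. The fallback you name for the first case does not apply. There you have derived $\codeg_G(d,c)=0$ and $dc\notin E(G)$, so $d,c$ are not at distance two, and Lemma~\ref{lem:edge-state-data} says nothing about $dc$. Until this step is proved, the partition into at most two classes is not established. Neither is any clause of the statement phrased in terms of ``different local classes''.

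\paragraph{The missing idea.} Follow the configuration $cd$; note $d\ne c$, as you observed. The configuration $cd$ is adjacent to $ac$ and $bc$ (moves along $da,db$) and to $vd$ (move along $cv$). Put $q=\alpha(cd)$. Then $q\sim\alpha(vd)=ab$, and $q$ is a common neighbour of $\alpha(ac)$ and $\alpha(bc)$.

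By Lemma~\ref{lem:edge-state-data}, if $ac$ is fixed then $\alpha(ac)=ac$ contains $c$ and not $v$. If $ac$ moves, then $\alpha(ac)=ve$ is a nonedge with $e\notin\{a,c,v\}$. The same holds for $bc$.

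\emph{Both fixed.} Since $N_G(a)\cap N_G(b)=\{v,d\}$ and $c\notin\{v,d\}$, the only common neighbours of $ac,bc$ are $cv,cd$. As $q\ne\alpha(cv)=cv$, we get $q=cd$. But $cd$ is disjoint from $ab$, hence not adjacent to it.

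\emph{Both move.} Write $\alpha(ac)=vd'$ and $\alpha(bc)=vd''$. Because $vd'$ is a nonedge, every common neighbour of these has the form $vz$. Adjacency to $ab$ then forces $q\in\{va,vb\}$. These are fixed edges, so $cd\in\{va,vb\}$, which is impossible.

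Hence exactly one of $ac,bc$ moves, which is the parity identity you need.

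\paragraph{How the paper phrases it.} The paper uses Lemma~\ref{lem:token-square-types} on the 4-cycle $(ac,cv,bc,cd)$, whose opposite pairs have codegree $\codeg_G(a,b)=\codeg_G(v,d)=2$ by \eqref{eq:degrees}. Its image cannot share a vertex, so it is a rectangle. At the corner $cv$, the two neighbours then contain different members of $\{c,v\}$.
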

\begin{proof}
For distinct neighbours \(a,b\) of \(v\), put \(t_v(a,b)=1\) when
\(\alpha(ab)\ne ab\), and zero otherwise. We first prove the parity
identity
\begin{equation}\label{eq:edge-state-local-parity}
 t_v(a,b)+t_v(a,c)+t_v(b,c)=0\quad\text{in }\mathbb F_2
\end{equation}
for three distinct neighbours \(a,b,c\) of \(v\).
If all three values vanish, the identity holds. Otherwise relabel so
that \(ab\) moves. By Lemma~\ref{lem:edge-state-data}, the configurations
\(ab,vd\) are exchanged and
\[
 N_G(a)\cap N_G(b)=\{v,d\},\qquad
 N_G(v)\cap N_G(d)=\{a,b\}.
\]
Since \(vd\) is a nonedge and \(vc\) an edge, \(d\ne c\).
The four distinct configurations \((ac,cv,bc,cd)\) form a cycle, and
\eqref{eq:degrees} gives both opposite pairs codegree two.
Put \(q=\alpha(cd)\).
Its image is the 4-cycle \((\alpha(ac),cv,\alpha(bc),q)\), since \(cv\) is fixed.
Also \(cd\sim vd\) along the edge \(cv\) of \(G\), so \(q\sim ab\).

A moved \(ac\) or \(bc\) is a nonedge at distance two through \(v\),
so Lemma~\ref{lem:edge-state-data} gives image \(ve\), with \(e\) outside
the original pair and \(v\).
Thus \(\alpha(ac)\) contains exactly one of \(c,v\), namely \(c\) if \(ac\) is fixed;
similarly for \(\alpha(bc)\).
By Lemma~\ref{lem:token-square-types}, the image cycle either shares
a vertex of \(G\) or is a rectangle. A shared vertex must be \(c\) or \(v\).
If it is \(c\), then \(\alpha(ac)=ac\), \(\alpha(bc)=bc\), and \(q\sim ab\) forces
\(q=ac\) or \(bc\), contradicting distinctness of the image configurations.
If it is \(v\), adjacency to \(ab\) forces \(q=va\) or \(vb\).
These belong to \(E(G)\) and are fixed, so injectivity gives \(cd=va\) or \(vb\),
impossible. Hence the image is a rectangle. At its corner \(cv\),
the neighbours \(\alpha(ac),\alpha(bc)\) contain different members of \(\{c,v\}\).
Exactly one of \(ac,bc\) therefore moves, proving
\eqref{eq:edge-state-local-parity}.

Connectedness and order at least three give a neighbour at every vertex.
Choose a reference neighbour \(a_0\) at \(v\). Define
\(\ell_v(a_0)=0\) and \(\ell_v(a)=t_v(a_0,a)\) otherwise.
For distinct neighbours, parity gives
\(t_v(a,b)=\ell_v(a)+\ell_v(b)\); if one is \(a_0\), the same
identity is the definition. The two possible values of \(\ell_v\)
therefore partition the incident edges into at most two classes.
At degrees one and two this construction works directly without a
triple equation. Changing the reference neighbour adds one constant
to all labels, so the unlabelled partition does not depend on the choice.
For distinct neighbours, any partition with the stated property puts
\(va,vb\) in the same class exactly when \(t_v(a,b)=0\). This relation
determines its classes uniquely.

For a mixed pair, \(t_v(a,b)=1\). The profile equalities determine
the other diagonal \(vd\) uniquely. They also give the four cross edges
and exclude both diagonals, hence give a unique chordless 4-cycle.
Both diagonal configurations move, so the two incident edges at every
corner belong to different local classes. Its parallelogram identity is
exactly \eqref{eq:edge-state-distance-sum} for the exchanged configurations.
\end{proof}

\section{Cartesian structure and the pointwise stabilizer}
\begin{definition}\label{def:coordinate-exchange}
For a fixed identification in a Cartesian product decomposition \(G\cong A\square B\), its \emph{coordinate exchange} is the transported involution
\(\xi\bigl(\{(a,b),(a',b')\}\bigr)=\{(a,b'),(a',b)\}\).
\end{definition}
Coordinate permutations of this kind are used in
\cite[Proof of Theorem~1.2]{cartesian-aut}.
\begin{lemma}\label{lem:exchange-sound}
A coordinate exchange is a token automorphism fixing \(E(G)\) pointwise. Conjugation by an automorphism of the root graph gives the exchange in the transported Cartesian product decomposition.
\end{lemma}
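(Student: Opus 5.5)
Work directly in coordinates. Identify \(V(G)\) with \(V(A)\times V(B)\) via the fixed decomposition, so a configuration is an unordered pair \(\{(a,b),(a',b')\}\) of distinct vertices. Then \(\xi\) is well defined and is an involution on \(\binom{V(G)}2\).

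First, \(\xi\) is well defined. Exchanging the second coordinates is symmetric in the two tokens: swapping the roles of \((a,b)\) and \((a',b')\) gives the same output pair. The output is a genuine pair, since \((a,b')=(a',b)\) would force \(a=a'\) and \(b=b'\). Applying \(\xi\) twice returns the original pair, so \(\xi\) is a bijection of \(V(F_2(G))\).

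Next, \(\xi\) fixes \(E(G)\) pointwise. An edge of \(A\square B\) agrees in one coordinate. If \(a=a'\), then \(\xi\) sends \(\{(a,b),(a,b')\}\) to \(\{(a,b'),(a,b)\}\), the same pair. If \(b=b'\), the exchange does nothing.

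Adjacency is preserved. Let \(P=\{(a,b),(a',b')\}\sim Q\), so one token moves along an edge of \(G\) while the other stays put. By symmetry, say the token \((a,b)\) moves to \((c,d)\), so \(Q=\{(c,d),(a',b')\}\). There are two cases.
\begin{itemize}
\item \(A\)-move, with \(d=b\) and \(ac\in E(A)\). Then \(\xi(P)=\{(a,b'),(a',b)\}\) and \(\xi(Q)=\{(c,b'),(a',b)\}\). These differ by moving \((a,b')\) to \((c,b')\), which is an edge of \(G\).
\item \(B\)-move, with \(c=a\) and \(bd\in E(B)\). Then \(\xi(Q)=\{(a,b'),(a',d)\}\), which differs from \(\xi(P)\) by moving \((a',b)\) to \((a',d)\).
\end{itemize}
In both cases the stationary token of \(\xi(P)\) does not coincide with the new position, because \(\xi(Q)\) is a genuine pair. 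Hence \(\xi\) maps edges to edges. Since \(\xi\) is an involution, it also preserves non-adjacency, so \(\xi\in\Aut(F_2(G))\).

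Conjugation is a transport-of-structure computation. Let \(\beta\in\Aut(G)\) and let \(\iota:A\square B\to G\) be the fixed identification. Then \(\beta\iota\) is another Cartesian identification, the transported decomposition. Write \(\xi_\iota=\iota^{(2)}\xi_0(\iota^{(2)})^{-1}\), where \(\xi_0\) is the exchange on \(A\square B\). Since \((\beta\iota)^{(2)}=\beta^{(2)}\iota^{(2)}\), we get
\[
\beta^{(2)}\xi_\iota(\beta^{(2)})^{-1}=(\beta\iota)^{(2)}\xi_0\bigl((\beta\iota)^{(2)}\bigr)^{-1}=\xi_{\beta\iota},
\]
which is the exchange in the transported decomposition.

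The only delicate point is making sure the image configurations are genuine pairs and that no other cases occur. Both follow from the injectivity of \(\xi\) and the definition of the Cartesian product, so I expect no real obstacle.
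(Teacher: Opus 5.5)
Your proof is correct, but it reaches the conclusion by direct verification rather than by the paper's lifting argument.

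The paper views \(F_2(G)\) as the quotient of \(G\square G\) minus its diagonal by interchange of the two entries. For \(G=A\square B\), swapping the two \(B\)-coordinates is a factor permutation of \(A\square B\square A\square B\), hence a graph automorphism. It preserves the deleted diagonal and commutes with the interchange, so it descends to \(\xi\).

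Your case split into \(A\)-moves and \(B\)-moves is exactly the check that this factor permutation preserves edges, carried out downstairs on unordered pairs. Your observation that \((a,b')=(a',b)\) forces \(a=a'\) and \(b=b'\) plays the role of the diagonal being preserved.

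The paper's route is shorter and explains why \(\xi\) is an automorphism at all: it is a symmetry of the ordered two-token configuration space that is compatible with forgetting the order. It relies on the quotient description of \(F_2(G)\) and on the standard fact that compatible automorphisms descend. Your route is self-contained and makes the genuine-pair and symmetric-difference checks explicit. The second claim also holds directly: an involution mapping edges to edges preserves non-adjacency as well.

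Both arguments handle the pointwise fixing of \(E(G)\) identically. For conjugation, your identity \((\beta\iota)^{(2)}=\beta^{(2)}\iota^{(2)}\) is precisely what the paper calls relabelling the product decomposition.
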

\begin{proof}
The token graph is the quotient of
\(G\square G-\{(x,x):x\in V(G)\}\) by interchanging the two
entries: each edge moves one token to an unoccupied vertex.
For \(G=A\square B\), swapping the two \(B\)-coordinates in
\(A\square B\square A\square B\) is a graph automorphism. It
preserves the deleted diagonal and commutes with interchange of the two entries,
so induces \(\xi\) on the quotient. A configuration in \(E(G)\) has only one
differing factor coordinate, hence is fixed. Relabelling the product
decomposition gives the conjugation assertion.
\end{proof}

\begin{lemma}\label{lem:product-twins}
The only connected nontrivial Cartesian product with twin vertices is $C_4$.
\end{lemma}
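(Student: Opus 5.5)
Let \(G=A\sq B\) with \(A,B\) connected of order at least two, and suppose \(u=(a,b)\) and \(u'=(a',b')\) are twins. Recall that \(N_G(a,b)=\{(x,b):x\in N_A(a)\}\cup\{(a,y):y\in N_B(b)\}\). The plan is to show that twins must differ in both coordinates, and then that the neighbourhood condition forces both factors to be \(K_2\).

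\emph{Step 1: twins differ in both coordinates.} Suppose instead that \(b=b'\), so \(a\ne a'\). Since \(B\) is connected of order at least two, \(b\) has a neighbour \(y\). Then \((a,y)\) is adjacent to \((a,b)\) and differs from \((a',b)\). But \((a,y)\) and \((a',b)\) differ in both coordinates, so they are not adjacent. This contradicts twinness. The case \(a=a'\) is symmetric, using a neighbour of \(a\) in \(A\).

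\emph{Step 2: the coordinates are adjacent and the degrees are one.} Now \(a\ne a'\) and \(b\ne b'\), so \(u\) and \(u'\) are not adjacent and \(N_G(u)=N_G(u')\). Take any neighbour \(x\) of \(a\) in \(A\), which exists since \(A\) is connected of order at least two. The vertex \((x,b)\) lies in \(N_G(u)\), so it is adjacent to \((a',b')\). Because \(b\ne b'\), adjacency forces \(x=a'\) and \(bb'\in E(B)\). Hence \(N_A(a)=\{a'\}\), and symmetrically \(N_B(b)=\{b'\}\). Applying the same argument from \(u'\) gives \(N_A(a')=\{a\}\) and \(N_B(b')=\{b\}\).

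\emph{Step 3: conclude.} In \(A\), the edge \(aa'\) is a component, because neither endpoint has any other neighbour. Since \(A\) is connected, \(A=K_2\). Likewise \(B=K_2\), so \(G=K_2\sq K_2\cong C_4\). Conversely, \(C_4\) has twins: its opposite vertices have the same neighbourhood.

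The only point needing care is Step~2, namely extracting \(x=a'\) from the adjacency \((x,b)\sim(a',b')\). This follows because Cartesian adjacency requires agreement in exactly one coordinate, and \(b\ne b'\) rules out agreement in the second coordinate. Everything else is bookkeeping.
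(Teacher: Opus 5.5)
Your proof is correct and follows essentially the same route as the paper. Both arguments first exclude twins that agree in one coordinate, using a neighbour in the other factor. Both then show that each coordinate of one twin has the other twin's coordinate as its unique neighbour, which forces $A=B=K_2$ and hence the product is $C_4$.
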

\begin{proof}
Consider twins in \(A\square B\), where both factors
are connected and nontrivial. Suppose first that twin vertices agree
in their first coordinate, say \((a,b),(a,b')\), with \(b\ne b'\).
Choose a neighbour \(a_1\) of \(a\). Then \((a_1,b)\) is adjacent
to the first vertex but differs from the second in both coordinates,
contradicting twinness. The same argument rules out agreement in the second coordinate.

Thus twins have the form \((a,b),(a',b')\) with both coordinates
different. Every neighbour \((a_1,b)\) of the first vertex must be
adjacent to the second, forcing \(a_1=a'\). Reversing the twins shows
that \(a\) is the only neighbour of \(a'\). Connectedness gives
\(A=K_2\). The same argument gives \(B=K_2\), so the product is
\(C_4\). Its opposite vertices are twins, verifying the converse.
\end{proof}

\begin{lemma}[Affine Cartesian-product criterion]
\label{lem:distance-product}
Let \(\Gamma\) be a finite connected simple graph. At each vertex
partition its incident edges into one or two nonempty unlabelled classes,
with two at some vertex. Let \(\Lambda\) be a torsion-free abelian group
and \(F:V(\Gamma)\to\Lambda\) injective. Suppose that for every pair
of incident edges \(vx,vy\) in different local classes there exists a
4-cycle \(v,x,z,y\) alternating between
the local classes at every corner and satisfying the parallelogram identity
\[
 F(v)+F(z)=F(x)+F(y).
\]
Then both classes occur everywhere and have consistent global names
red and blue. For \(o\in V(\Gamma)\), let \(A,B\) be its red and blue
components, using only edges of the indicated colour.
Then \(\Gamma\cong A\square B\) with these prescribed factor directions,
and both factors are connected and nontrivial. In particular the two
global edge classes have the unique square property: every mixed corner
spans a unique chordless 4-cycle with opposite edges in the same class.
\end{lemma}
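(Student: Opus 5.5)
The plan is to build the product coordinates directly from the squares, and to use \(F\) only to exclude ``twisted'' identifications that would make the structure a quotient of a product rather than a product.

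\emph{Two classes everywhere.} Let \(v\) carry two classes and \(vx\) be an incident edge. Pick \(vy\) in the other class at \(v\). The hypothesis gives a 4-cycle \(v,x,z,y\) alternating at every corner, so \(xv\) and \(xz\) lie in different classes at \(x\). Hence every neighbour of a two-class vertex carries two classes, and by connectedness every vertex does.

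\emph{Global colours.} Transport colours along edges: an edge \(vx\) coloured red at \(v\) makes the class of \(xv\) at \(x\) red, and the other class at \(x\) blue. By the alternation at each corner, transporting around one of the hypothesised squares returns the original labelling. The difficulty is that cycles of \(\Gamma\) need not be generated by these squares, so I will not prove path-independence abstractly. Instead I take the defining property of the colouring to be the conclusion to establish, and derive it from the coordinate construction below.

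\emph{Coordinates and the parallelogram identity.} Fix \(o\) and its local labelling. Call a path red or blue if its consecutive edges stay in one local class at each interior vertex. Let \(A\) and \(B\) be the sets of vertices reachable from \(o\) by such paths in the two classes.

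The key commutation step is the one used in Step~2 of Theorem~\ref{thm:edge-state-preservation}. A blue step followed by a red step at a mixed corner can be replaced, via the alternating square, by a red step followed by a blue step with the same endpoints. Repeating this along any path from \(o\) to \(u\) gives \(u\) a representation as a red walk followed by a blue walk. Next use the parallelogram identity along each such square: opposite edges have equal differences \(F(z)-F(y)=F(x)-F(v)\). Hence blue edges transport red walks rigidly, and
\[
F(u)=F(o)+\bigl(F(a)-F(o)\bigr)+\bigl(F(b)-F(o)\bigr)\qquad\text{for some }a\in A,\ b\in B.
\]
Define \(\Phi(u)=(a,b)\).

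The main obstacle is well-definedness and injectivity of \(\Phi\). Here I would argue with \(F\). If two representations \((a,b),(a',b')\) give the same vertex, the displayed identity yields
\[
F(a)-F(a')=F(b')-F(b).
\]
Transporting the red fibre of \(o\) along a blue walk \(b\to b'\), the squares give a fibre map that is a translation in \(\Lambda\). Composing such transports around a closed blue walk gives a permutation of a finite fibre acting by a fixed translation \(\delta\), so some power is the identity and \(n\delta=0\). Torsion-freeness forces \(\delta=0\), and injectivity of \(F\) forces the permutation to be trivial. This kills the holonomy. It then gives \(a=a'\) and \(b=b'\), proves that the transported colouring is globally consistent, and shows that each red fibre meets each blue fibre in exactly one vertex.

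\emph{Adjacency and conclusion.} The edges of \(\Gamma\) correspond under \(\Phi\) to the edges of \(A\sq B\). A red edge changes only the \(A\)-coordinate, a blue edge only the \(B\)-coordinate, and the transports identify adjacency in every fibre with adjacency in \(A\) or \(B\). Both factors are nontrivial because both classes occur at \(o\), and they are connected by construction. The unique square property is then the standard property of mixed corners in a Cartesian product.
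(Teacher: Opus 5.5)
There is a genuine gap: the global consistency of the colour names. You postpone it and then assert that it follows from the holonomy argument, but it does not.

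Your finite-order argument controls only translational holonomy. To make transport along a blue walk $b\to b'$ a permutation of \emph{one} red fibre, you use that the red fibres through $b$ and through $b'$ coincide because both contain $u$. That presupposes ``the red class'' at every vertex, hence ``the red fibre through a vertex'', is already well defined independently of the walk used to reach it. This is exactly the consistency you wanted to derive.

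For a genuinely closed walk at $o$ the argument gives nothing. There $\delta=F(o)-F(o)=0$ automatically. If the walk interchanges the two local classes at $o$, the composed transport carries the red fibre of $o$ onto the set reached through the other class, instead of permuting a single fibre, so there is nothing to take powers of. Closed walks mixing both colours, which after sorting are a red walk $o\leadsto p$ followed by a blue walk $p\leadsto o$, are not treated at all. The sorting also needs a check you omit: replacing a blue--red turn by the other two sides of its square must leave the names along the remaining suffix unchanged.

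Consistency is not automatic from the local data. Suitable finite quotients of the square grid by a glide reflection exchanging the two axis directions satisfy all the combinatorial hypotheses but admit no consistent naming. So injectivity of $F$ has to be used at exactly this point.

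The missing step is the paper's Step~1, which uses only injectivity of $F$:
\begin{itemize}
\item Suppose a closed walk at $o$ reverses the names, and sort it as a red walk $o\leadsto p$ followed by a blue walk $p\leadsto o$.
\item Take an initially blue edge $ob$ and put $\lambda=F(b)-F(o)$.
\item Transporting $ob$ forward along the red part gives a blue edge at $p$ with increment $\lambda$.
\item Since $ob$ is red in the terminal naming, transporting it backwards along the blue part gives a red edge at $p$ with increment $\lambda$, in the same naming at $p$.
\item Injectivity of $F$ makes the two endpoints equal, so one edge lies in both local classes, a contradiction.
\end{itemize}

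Once this is supplied, the rest of your outline works. Your permutation-of-a-fibre argument (a translation of finite order in a torsion-free group is trivial) correctly shows that a red and a blue component meet at most once. It is a somewhat slicker substitute for the paper's sequence $F(z_j)=F(x)+j(F(y)-F(x))$. The remainder of your outline matches the paper's Steps~2 and~3.
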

\begin{proof}
A fourth vertex satisfying the parallelogram identity is unique: its value
is \(F(x)+F(y)-F(v)\), and \(F\) is injective. We use only 4-cycles
satisfying this identity.

\emph{Step 1: give the local classes consistent global names.}
A two-class vertex cannot have a one-class neighbour: an alternating
square through the joining edge would require two classes there.
Connectedness gives two classes everywhere. Choose names at \(o\)
and carry them along a walk, agreeing on each traversed edge.
In a 4-cycle satisfying the parallelogram identity \(v,x,z,y\),
\[
 F(z)-F(x)=F(y)-F(v).
\]
Thus transporting an edge across an edge of the other class preserves
its increment. Replacing a blue-red turn by the other two sides of
its alternating square changes the turn to red-blue. At the opposite
corner the original and replacement paths arrive along the square's
red and blue edges, which belong to different local classes. Both paths
therefore assign the same names to those classes. The replacement
preserves the terminal naming, including the names along any remaining
suffix. Each replacement decreases the number of blue-before-red pairs.
Repeating it puts all red steps before all blue steps without changing
the terminal naming.

Suppose a closed walk reversed the names, and sort it as a red walk
\(o\leadsto p\) followed by a blue walk \(p\leadsto o\).
Choose an initially blue edge \(ob\), and put \(\lambda=F(b)-F(o)\).
Transport it along the red part to obtain a blue edge at \(p\) with
increment \(\lambda\). In the terminal naming at \(o\), the same edge
\(ob\) is red. Transporting it backwards along the blue part gives
a red edge at \(p\) with increment \(\lambda\). Reversing the blue part
also reverses its name transport, recovering at \(p\) exactly the naming
obtained after the red part. Injectivity makes the other endpoints equal, so one edge would belong
to both local classes, a contradiction. Hence every closed walk preserves
the names, and the naming is independent of the path.

\emph{Step 2: fill finite rectangles.}
A red walk \(s=a_0,\ldots,a_r=a\) and a blue walk
\(s=b_0,\ldots,b_t=b\) form adjacent boundaries of a rectangle.
Fill its cells successively by the alternating 4-cycles supplied by the hypothesis. Opposite
boundaries are red and blue walks to one endpoint \(z\), and induction gives
\[
 F(z)=F(a)+F(b)-F(s).
\]
At every cell the two boundary edges lie in different local classes
and are therefore distinct, so the 4-cycle-completion hypothesis applies.
Empty boundary walks are allowed; distant cells may repeat vertices.
If a red and a blue component met at distinct \(x,y\), start with
\(z_0=x,z_1=y\) and walks of both colours from \(z_0\) to \(z_1\).
Fill their rectangle. Its opposite boundaries are walks of both colours
from \(z_1\) to the new endpoint \(z_2\). Repeating this construction
gives walks of both colours from \(z_j\) to \(z_{j+1}\), with
\[
 F(z_{j+1})=2F(z_j)-F(z_{j-1}),\qquad j\ge1.
\]
Induction therefore gives
\[
 F(z_j)=F(x)+j(F(y)-F(x)),\qquad j=0,1,\ldots.
\]
The increment is nonzero by injectivity. Since the group is torsion-free,
these values are distinct, contradicting finiteness. Thus a red and a blue component
intersect in at most one vertex.

\emph{Step 3: the rectangles give the product.}
For \(a\in V(A),b\in V(B)\), define \(p(a,b)\) as the endpoint of
their rectangle from \(o\). Its value is \(F(a)+F(b)-F(o)\), so injectivity
makes the endpoint independent of the chosen walks. It lies in the blue
component of \(a\) and the red component of \(b\).
If two pairs have the same image, their \(A\)-vertices lie in the same
blue component. They also lie in the red component \(A\), so the preceding
intersection result makes them equal. The \(B\)-vertices are equal by
the same argument. Thus \(p\) is injective.

Appending a red edge \(aa'\) to a boundary walk and filling the
same blue boundary gives the red edge \(p(a,b)p(a',b)\).
The same construction with the colours interchanged preserves blue
adjacency. Conversely, take a red edge
\(p(a,b)z'\). Its rectangle supplies a blue walk from \(a\) to \(p(a,b)\).
Transport this red edge backwards along that walk, one square at a time.
It ends at a red neighbour \(a'\in V(A)\) of \(a\), preserving the increment:
\[
 F(z')-F(p(a,b))=F(a')-F(a).
\]
The formula for \(F(p(a',b))\) and injectivity give \(z'=p(a',b)\).
The same argument with the colours interchanged applies to blue edges.
Thus \(p\) preserves and reflects edges, and its image contains every
neighbour of each of its vertices. Since it contains \(o=p(o,o)\),
connectedness implies that it is all of \(V(\Gamma)\).
Therefore \(p:A\square B\to\Gamma\) is the required isomorphism.
Both factors are nontrivial because both colours occur at \(o\).
In a product a mixed corner has its unique coordinate fourth vertex
and neither diagonal is an edge, proving the chordless-square assertion.
\end{proof}

\begin{theorem}[The pointwise stabilizer of \(E(G)\)]\label{thm:edge-state-kernel}
Let \(G\) be a finite connected simple graph of order at least three, with
Cartesian prime factorization \(G=G_1\sq\cdots\sq G_r\), where each
factor has at least two vertices. Write \(\xi_J\) for the exchange of
the two tokens' coordinates indexed by \(J\subseteq\{1,\ldots,r\}\).
Then
\[
 \mathcal K_E(G)=\{\xi_J:J\subseteq\{1,\ldots,r\}\}
 \cong C_2^r/\langle(1,\ldots,1)\rangle\cong C_2^{\,r-1}.
\]
For subsets \(J,K\subseteq\{1,\ldots,r\}\),
\[
 \xi_J=\xi_K\quad\Longleftrightarrow\quad K=J\text{ or }K=J^c,
\]
where \(J^c=\{1,\ldots,r\}\setminus J\).
In particular \(\mathcal K_E(G)=1\) if and only if \(G\) is Cartesian-prime.
\end{theorem}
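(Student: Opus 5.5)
The plan has two inclusions plus the faithfulness count. The easy inclusion is that every \(\xi_J\) lies in \(\mathcal K_E(G)\). Write \(G\cong A\sq B\) with \(A=\mathop{\square}_{j\in J}G_j\) and \(B\) the product of the remaining factors; then \(\xi_J\) is the coordinate exchange of Definition~\ref{def:coordinate-exchange}. For \(J=\varnothing\) or \(J=\{1,\ldots,r\}\) it is the identity, because swapping all coordinates merely swaps the two tokens. Lemma~\ref{lem:exchange-sound} then gives \(\xi_J\in\mathcal K_E(G)\).

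Multiplicativity \(\xi_J\xi_K=\xi_{J\triangle K}\) is immediate from the coordinate description. So \(J\mapsto\xi_J\) is a homomorphism \(C_2^r\to\mathcal K_E(G)\), and \((1,\ldots,1)\) lies in its kernel. For the kernel claim, suppose \(\emptyset\ne J\ne\{1,\ldots,r\}\). Choose \(u\) and \(u'\) differing in exactly one coordinate \(j\in J\) and one coordinate \(k\notin J\). Then \(\xi_J\) moves \(uu'\) to the other diagonal of the \(jk\)-square. Hence the kernel is exactly \(\langle(1,\ldots,1)\rangle\), which gives \(\xi_J=\xi_K\iff K\in\{J,J^c\}\) and the isomorphism with \(C_2^{r-1}\).

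The substantial inclusion is \(\mathcal K_E(G)\subseteq\{\xi_J\}\). Fix \(\alpha\in\mathcal K_E(G)\). By Lemma~\ref{lem:edge-state-extension}, \(\alpha\) is determined by its action on configurations at distance \(\le2\). Edges are fixed, and Lemma~\ref{lem:edge-state-data} shows that a distance-two pair \(ab\) either is fixed or is swapped with the other diagonal \(vd\) of a chordless square. Which pairs move is encoded in the local partitions of Lemma~\ref{lem:edge-state-local-partition}. If every local partition has one class, \(\alpha\) fixes all distance-two configurations, so \(\alpha=1=\xi_\varnothing\).

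Otherwise, apply Lemma~\ref{lem:distance-product} with \(\Gamma=G\), \(\Lambda=\mathbb Z^{V(G)}\) and \(F=D\), which is injective. The squares and parallelogram identities are supplied by Lemma~\ref{lem:edge-state-local-partition}, and at a one-class vertex there are no mixed pairs to check. This yields \(G\cong A\sq B\), with red edges in the \(A\)-direction and blue in the \(B\)-direction. Now let \(\xi\) be the coordinate exchange for this decomposition. A distance-two configuration \(ab\) with common neighbour \(v\) is moved by \(\alpha\) exactly when \(va\) and \(vb\) have different colours. In a product, that is exactly when \(\xi\) moves \(ab\), and both maps send it to the opposite corner of the unique square. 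So \(\alpha\) and \(\xi\) agree at distance \(\le2\), and Lemma~\ref{lem:edge-state-extension} gives \(\alpha=\xi\).

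The main obstacle is the remaining step: expressing the decomposition \(A\sq B\) through prime factors. By uniqueness of Cartesian prime factorization for connected graphs (Sabidussi--Vizing), every factorization \(A\sq B\) of \(G\) groups the prime factors. Concretely, there is a subset \(J\) such that, under the fixed identification, \(A\) corresponds to \(\prod_{j\in J}G_j\) and \(B\) to the rest, possibly up to an automorphism that permutes isomorphic prime factors. I would argue this through the edge colouring. Each prime-factor direction class (the \(\Theta^*\)-classes) must be monochromatic, because the red/blue classes are unions of product relations and are closed under square transitivity. Granting that, the coordinate exchange depends only on which coordinates are swapped, so it equals \(\xi_J\).

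Finally, \(\mathcal K_E(G)=1\) exactly when \(r=1\), which is exactly when \(G\) is Cartesian-prime.
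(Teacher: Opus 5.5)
Your proposal is correct and follows essentially the paper's route. Lemma~\ref{lem:edge-state-local-partition} feeds Lemma~\ref{lem:distance-product} with \(F=D\), \(\alpha\) agrees with the resulting coordinate exchange at distance at most two so Lemma~\ref{lem:edge-state-extension} gives equality, and the map \(C_2^r\to\mathcal K_E(G)\) has kernel \(\langle(1,\ldots,1)\rangle\) by the same mixed-square witness. The step you leave to be granted, that the colour classes are unions of prime directions and so the exchange is some \(\xi_J\), is exactly where the paper also just cites the coordinate form of unique Cartesian prime factorization.
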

\begin{proof}
Take \(1\ne\alpha\in\mathcal K_E(G)\).
By Lemma~\ref{lem:edge-state-local-partition}, the incident edges have
local partitions, and the injective map \(D\) satisfies the parallelogram
identity on their alternating 4-cycles.
By Lemma~\ref{lem:edge-state-extension}, \(\alpha\ne1\) moves a configuration
\(xy\) with \(\dist_G(x,y)\le2\). Since \(E(G)\) is fixed pointwise,
\(\dist_G(x,y)=2\). At a common neighbour the two incident edges lie
in different local classes, so some vertex has two classes.
Lemma~\ref{lem:distance-product}, with
\(\Gamma=G\), \(F=D\) and \(\Lambda=\mathbb Z^{V(G)}\), gives
\(G=A\square B\) with nontrivial connected factors whose directions
agree with the local classes. Thus a Cartesian-prime root has trivial kernel.

Let \(\xi\) exchange the two tokens' \(B\)-coordinates.
For a configuration \(xy\) with \(\dist_G(x,y)=2\), choose a common neighbour \(v\).
If \(vx,vy\) have the same factor direction, \(x,y\) agree in the
other coordinate, so \(\xi\) fixes their unordered pair, as does
\(\alpha\) by Lemma~\ref{lem:edge-state-local-partition}.
If the directions differ, \(xy\) is a diagonal of the unique coordinate
rectangle, and both maps exchange its diagonals by that lemma.
By Lemma~\ref{lem:exchange-sound}, \(\xi\) is an automorphism fixing
\(E(G)\) pointwise. Thus \(\alpha,\xi\) agree on every configuration \(xy\)
with \(\dist_G(x,y)\le2\), and Lemma~\ref{lem:edge-state-extension} gives \(\alpha=\xi\).

Refine \(A,B\) into Cartesian primes. The coordinate form of prime
uniqueness \cite[Theorems~6.6 and~6.8]{handbook} identifies their directions
with two unions of the fixed prime directions, up to a permutation and
separate factor isomorphisms. Applying a factor isomorphism to both tokens
commutes with swapping their coordinates, so \(\xi=\xi_J\) for
some \(J\). The identity is \(\xi_\varnothing\), so every kernel
element has this form. The same coordinate description gives, for an
automorphism \(\sigma\) of \(G\), let \(\pi_\sigma\) send each
prime-factor direction to its image direction under \(\sigma\). Then
\begin{equation}\label{eq:prime-exchange-action}
 \sigma^{(2)}\xi_J(\sigma^{(2)})^{-1}
 =\xi_{\pi_\sigma(J)}.
\end{equation}

By Lemma~\ref{lem:exchange-sound}, every \(\xi_J\) belongs to the
kernel, including the identity exchanges for the empty and full subsets.
Swapping coordinates twice cancels, so
\(\xi_J\xi_K=\xi_{J\triangle K}\).
Thus \(\mathbf1_J\mapsto\xi_J\) defines a surjective homomorphism
\(C_2^r\to\mathcal K_E(G)\), where \(\mathbf1_J\) is the characteristic
vector of \(J\). The all-ones vector exchanges the two whole tokens,
so acts trivially on unordered pairs.
For a nonempty proper \(J\), choose \(i\in J,j\notin J\) and two
vertices differing in exactly coordinates \(i,j\), possible since both
factors have at least two vertices. Their pair is changed to the other
diagonal by \(\xi_J\), so \(\xi_J\ne1\).
The homomorphism therefore has kernel \(\langle(1,\ldots,1)\rangle\).
Taking its quotient gives the stated group and the equivalence
\(\xi_J=\xi_K\) exactly when \(J\triangle K\) is empty or full.
Since \(r\ge1\), the group is trivial exactly when \(r=1\), completing
the Cartesian-prime characterization.
\end{proof}

\section{Switch subgroup and induced automorphisms}
We now determine the subgroup generated by component switches and its
intersection with the induced automorphisms. The resulting quotient also
describes the choice of correction in the normalization proof.
For connected \(G\) of order at least five, use \(I_G,\mathcal T_G\) from the
Introduction. For \(\varepsilon\in\mathbb F_2^{I_G}\), set
\[
 \tau_\varepsilon=\prod_{(S,C)\in I_G}\tau_{S,C}^{\varepsilon_{S,C}}.
\]
Here \(\mathbf1_S\) is one on the coordinates indexed by \(S\) and zero
elsewhere; \(\mathbf1\) denotes the constant one-vector within a block.
Write \(s_S\) for the transposition of the endpoints of
\(S\). Empty products are the identity and empty sums are the zero space.

\begin{proposition}\label{prop:switch-module}
Let \(G\) be connected of order at least five. The map
\(\tau:\mathbb F_2^{I_G}\to\Aut(F_2(G))\) is an injective homomorphism;
write \(\mathcal S_G=\tau(\mathbb F_2^{I_G})\). Then \(\mathcal T_G\) is normal in
\(\Aut(G)\), \(|\mathcal T_G|=2^{|\mathcal Q_2(G)|}\), and
\[
\begin{aligned}
 \tau_{\mathbf1_S}&=s_S^{(2)},\\
 \tau\Bigl(\bigoplus_{S\in\mathcal Q_2(G)}\langle\mathbf1_S\rangle\Bigr)
   &=\mathcal T_G^{(2)},\\
 \mathcal S_G\cap\Aut(G)^{(2)}&=\mathcal T_G^{(2)}.
\end{aligned}
\]
The coordinate action
\[
 (h\cdot\varepsilon)_{S,C}
   =\varepsilon_{h^{-1}(S),h^{-1}(C)}
\]
satisfies \(h^{(2)}\tau_\varepsilon(h^{(2)})^{-1}=\tau_{h\cdot\varepsilon}\).
It preserves \(\bigoplus_{S\in\mathcal Q_2(G)}\langle\mathbf1_S\rangle\), and \(\mathcal T_G\) acts trivially on \(I_G\), so
the action factors through \(\Aut(G)/\mathcal T_G\). Moreover,
\[
\begin{aligned}
 \mathbb F_2^{I_G}\Big/\Bigl(
   \bigoplus_{S\in\mathcal Q_2(G)}\langle\mathbf1_S\rangle\Bigr)
 &\cong\bigoplus_{S\in\mathcal Q_2(G)}
       \mathbb F_2^{\pi_0(G-S)}/\langle\mathbf1\rangle\\
 &\cong\mathcal S_G/\mathcal T_G^{(2)}.
\end{aligned}
\]
These quotients have dimension
\(\sum_{S\in\mathcal Q_2(G)}(|\pi_0(G-S)|-1)\).
\end{proposition}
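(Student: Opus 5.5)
The plan is to build everything from three ingredients: Lemma~\ref{lem:cut-compatibility}, the conjugation formula \eqref{eq:switch-conjugation}, and the faithfulness identity \eqref{eq:pair-map-faithfulness}. The steps are taken in the order below.

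\emph{Step 1: $\tau$ is an injective homomorphism.} Each $\tau_{S,C}$ is an involution, and all switches commute at order at least five by Lemma~\ref{lem:cut-compatibility}. Hence $\varepsilon\mapsto\tau_\varepsilon$ is a homomorphism from $\mathbb F_2^{I_G}$. For injectivity, suppose $\varepsilon\ne0$ and pick $(S,C)$ with $\varepsilon_{S,C}=1$. Write $S=\{a,b\}$ and choose $x\in V(C)$; since every component meets the common neighbourhood, we may take $x$ adjacent to $a$ and $b$. Consider the configuration $ax$. Switches at the same cut but other components do not act on it. Switches at a different cut $R$ would need a token in $R$, and $R$ is disjoint from $S$. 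If $x\in R$, the action of $\tau_{R,C'}$ on $ax$ moves $x$ to its twin and keeps $a$. Either way the tokens at $S$ are only changed by the switch at $(S,C)$, so $\tau_\varepsilon(ax)$ contains $b$ rather than $a$. Thus $\tau_\varepsilon\ne1$.

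\emph{Step 2: the identity $\tau_{\mathbf1_S}=s_S^{(2)}$.} We compare the two maps on configurations. A configuration $ax$ or $bx$ with $x\notin S$ is swapped by both. The configuration $ab$ is fixed by both. Configurations avoiding $S$ are fixed by both. Since each $s_S$ lies in $\Aut(G)$ by twinness, the transpositions of distinct twin 2-cuts commute, because the cuts are disjoint. They generate an elementary abelian group, and the identity $\tau_{\mathbf1_S}=s_S^{(2)}$ together with injectivity of $\tau$ gives $\tau(\bigoplus_S\langle\mathbf1_S\rangle)=\mathcal T_G^{(2)}$ and $|\mathcal T_G|=2^{|\mathcal Q_2(G)|}$. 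Here we also use that $\sigma\mapsto\sigma^{(2)}$ is injective by \eqref{eq:pair-map-faithfulness}. Normality of $\mathcal T_G$ follows because any $h\in\Aut(G)$ permutes $\mathcal Q_2(G)$, so $hs_Sh^{-1}=s_{h(S)}$.

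\emph{Step 3: the conjugation action.} The formula $h^{(2)}\tau_\varepsilon(h^{(2)})^{-1}=\tau_{h\cdot\varepsilon}$ follows by multiplying \eqref{eq:switch-conjugation} over the support of $\varepsilon$ and reindexing. The action preserves the block vectors, since $h\cdot\mathbf1_S=\mathbf1_{h(S)}$. To see that $\mathcal T_G$ acts trivially on $I_G$, note that $s_R$ fixes every twin cut setwise: it fixes $R$, and it fixes every cut disjoint from $R$. It also fixes each component of $G-S$ setwise. For $S=R$ this holds because $s_R$ is the identity off $R$. For $S\neq R$, the set $R$ lies in a single component of $G-S$ by Lemma~\ref{lem:cut-compatibility}, since $G\not\cong C_4$ at order at least five.

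\emph{Step 4: the intersection $\mathcal S_G\cap\Aut(G)^{(2)}$.} This is the expected main obstacle. The inclusion $\supseteq$ is Step~2. For the converse, suppose $\tau_\varepsilon=\sigma^{(2)}$. Every switch fixes configurations contained in $V(G)\setminus\bigcup\mathcal Q_2(G)$, and it maps $ux$ to $u'x$ with $u'\in\{u\}\cup(\text{twin of }u)$. Fix a vertex $v$ lying in no cut. Since $n\ge5$, there exist vertices $x,y$ with $vx,vy$ not both touching cut vertices. I would try instead to apply \eqref{eq:pair-map-faithfulness} directly. For each vertex $v$, the images $\tau_\varepsilon(vx)$ over $x\ne v$ must share the single vertex $\sigma(v)$. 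Each image contains either $v$ or, when $v\in S$, its twin.

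If $v\in S=\{a,b\}$ and $\varepsilon$ is not constant on the block of $S$, pick components $C,C'$ with different bits and $x\in C$, $x'\in C'$. Then $\tau_\varepsilon(vx)$ contains $v$ while $\tau_\varepsilon(vx')$ contains the twin $v'$. Any common vertex would have to be the other token, which is $x$ in one image and $x'$ in the other. Twin switches at other cuts can move $x$ to its twin, so some care is needed. Nevertheless, both tokens of the two images lie in disjoint regions $C\cup\{\text{twins}\}$ and $C'\cup\{\text{twins}\}$, unless $x$ or $x'$ belongs to $S$. Hence the intersection is empty, contradicting faithfulness. Thus $\varepsilon$ is constant on every block, so $\tau_\varepsilon\in\mathcal T_G^{(2)}$.

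\emph{Step 5: the quotient isomorphisms.} The first isomorphism in the display is blockwise, because $\mathbb F_2^{I_G}=\bigoplus_S\mathbb F_2^{\pi_0(G-S)}$ and each summand $\langle\mathbf1_S\rangle$ sits in its own block. The second is induced by the isomorphism $\tau$ onto $\mathcal S_G$, which maps the subspace onto $\mathcal T_G^{(2)}$. The dimension count is then immediate.
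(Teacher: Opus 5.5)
Your proposal is correct and follows essentially the paper's route: the same token-tracking criterion for injectivity (the token at $a$ in $ax$ ends at $b$ exactly when $\varepsilon_{S,C}=1$), the same comparison for $\tau_{\mathbf 1_S}=s_S^{(2)}$, the same use of Lemma~\ref{lem:cut-compatibility} for normality and for the trivial action of $\mathcal T_G$ on $I_G$, and an intersection step that argues by contradiction with \eqref{eq:pair-map-faithfulness} where the paper first notes $h(S)=S$ and reads every bit at $S$ off $h(a)$, which amounts to the same thing. One point to tighten: after a switch at another cut $R$ moves $x$ to its twin, you need that twin to stay in $C$, and your two ``regions'' in Step~4 need to be disjoint; both hold because $R$ lies in a single component of $G-S$ at order at least five, and in Step~1 you can also avoid this by applying the switches at $S$ first, using commutativity.
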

\begin{proof}
By Corollary~\ref{cor:component-switches} and
Lemma~\ref{lem:cut-compatibility}, the switches are commuting involutions.
Thus \(\tau\) is a homomorphism.
For injectivity, fix \(S=\{a,b\}\), \(C\in\pi_0(G-S)\) and \(x\in V(C)\).
Apply the switches based at \(S\) last. Every other cut is disjoint
from \(S\), so its switches fix \(a\). Its endpoints lie in one component
of \(G-S\), so they also leave the other token in \(C\).
Of the switches based at \(S\), only \(\tau_{S,C}\) can then act. Thus
\(\tau_\varepsilon(ax)\) contains \(b\) rather than \(a\) exactly when
\(\varepsilon_{S,C}=1\). If the automorphism is the identity, every coordinate must vanish.
This proves injectivity.

The product over all components of \(G-S\) exchanges exactly the configurations
containing one endpoint of \(S\), so \(\tau_{\mathbf1_S}=s_S^{(2)}\).
The transpositions have disjoint supports and are independent, giving
the stated order of \(\mathcal T_G\). Conjugation sends \(s_S\) to
\(s_{h(S)}\), so this subgroup is normal.
Formula~\eqref{eq:switch-conjugation} gives the coordinate action and
\(h\mathbf1_S=\mathbf1_{h(S)}\). A generator \(s_R\) fixes \(G-R\) pointwise. For another cut \(S\),
the endpoints of \(R\) lie in one component of \(G-S\).
Thus \(s_R\) fixes \(S\) and every component of \(G-S\) setwise,
and \(\mathcal T_G\) fixes every index.

Suppose \(\tau_\varepsilon=h^{(2)}\). By the switch definition and cut disjointness, every switch fixes each
cut as a token configuration. Hence \(h(S)=S\).
For \(S=\{a,b\}\) and \(x\in V(C)\), the image \(h^{(2)}(ax)\) contains
\(b\) exactly when \(h(a)=b\), since \(h(x)\notin S\).
The preceding criterion therefore makes \(\varepsilon_{S,C}\) independent
of \(C\). Thus \(\varepsilon\in\bigoplus_{S\in\mathcal Q_2(G)}\langle\mathbf1_S\rangle\).
The reverse inclusion follows from
\(\tau(\bigoplus_{S\in\mathcal Q_2(G)}\langle\mathbf1_S\rangle)=\mathcal T_G^{(2)}\),
proving the intersection equality.

Injectivity and the equality for the constant subspaces give the quotient
isomorphism. Decomposing by cuts gives the direct sum.
Each block has dimension \(|\pi_0(G-S)|\), and its constant subspace has
dimension one. Subtracting these dimensions and summing proves the formula.
\end{proof}

In the normalization argument, put $\varepsilon_{S,C}=\eps_S(C)$ at
the selected cuts and zero elsewhere. The correction is then
$\tau_\varepsilon$. Reversing the order of the source twin pair or its corresponding target
pair at $S$ adds $\mathbf1_S$ to $\varepsilon$ and changes the correction
by $s_S^{(2)}$. Thus the ambiguity independent of these orderings is
$\mathcal S_G/\mathcal T_G^{(2)}$, the quotient by induced twin
transpositions described in Proposition~\ref{prop:switch-module}.

\begin{corollary}\label{cor:ordered-cuts}
Choose \(O(S)=(a_S,b_S)\) for each twin 2-cut, and put
\[
 \Aut(G;O)=\{h\in\Aut(G):h(a_S)=a_{h(S)},\ h(b_S)=b_{h(S)}
                    \text{ for every }S\}.
\]
Under the hypotheses of Proposition~\ref{prop:switch-module},
\(\Aut(G)=\mathcal T_G\rtimes\Aut(G;O)\), \(\Aut(G;O)\cong\Aut(G)/\mathcal T_G\), and
\[
 \mathcal S_G\Aut(G;O)^{(2)}
 \cong\mathbb F_2^{I_G}\rtimes(\Aut(G)/\mathcal T_G)
\]
with the coordinate action in that proposition.
\end{corollary}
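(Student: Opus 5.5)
The corollary has three claims: $\Aut(G)$ is a semidirect product of $\mathcal T_G$ by $\Aut(G;O)$, $\Aut(G;O)\cong\Aut(G)/\mathcal T_G$, and $\mathcal S_G\Aut(G;O)^{(2)}$ is a semidirect product. I would verify each directly and then transport the last one through the injective homomorphism $\tau$ of Proposition~\ref{prop:switch-module}.

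\emph{Step 1: $\Aut(G;O)$ is a subgroup.} Automorphisms permute $\mathcal Q_2(G)$, since twinness and disconnection of $G-S$ are invariant. If $h,k\in\Aut(G;O)$, then
\[
hk(a_S)=h(a_{k(S)})=a_{hk(S)},
\]
and the same holds for $b_S$. Closure under inverses follows in the same way.

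\emph{Step 2: $\mathcal T_G\cap\Aut(G;O)=1$.} The twin 2-cuts are disjoint by Lemma~\ref{lem:cut-compatibility}. So an element $t=\prod_{S\in\mathcal R}s_S$ of $\mathcal T_G$ fixes every cut setwise. If $\mathcal R\ne\varnothing$, then for $S\in\mathcal R$ it sends $a_S$ to $b_S\ne a_{t(S)}=a_S$, so $t\notin\Aut(G;O)$.

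\emph{Step 3: $\mathcal T_G\Aut(G;O)=\Aut(G)$.} Given $h\in\Aut(G)$, let $\mathcal R$ be the set of cuts $S$ with $h(a_S)=b_{h(S)}$. Put $t=\prod_{S\in\mathcal R}s_{h(S)}$. Then $th$ sends $a_S\mapsto a_{h(S)}$ for every $S$, because each $s_{h(S)}$ acts only on the cut $h(S)$ and the cuts are disjoint. Hence $th\in\Aut(G;O)$. Normality of $\mathcal T_G$, from Proposition~\ref{prop:switch-module}, then gives the internal semidirect product. The isomorphism $\Aut(G;O)\cong\Aut(G)/\mathcal T_G$ follows by restricting the quotient map, which is injective by Step 2 and surjective by Step 3.

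\emph{Step 4: the token-level product.} The map $\sigma\mapsto\sigma^{(2)}$ is injective by \eqref{eq:pair-map-faithfulness}. So $\Aut(G;O)^{(2)}\cong\Aut(G;O)$, and this subgroup normalizes $\mathcal S_G$ via $h^{(2)}\tau_\varepsilon(h^{(2)})^{-1}=\tau_{h\cdot\varepsilon}$. It remains to show $\mathcal S_G\cap\Aut(G;O)^{(2)}=1$, which is the only delicate point.

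Take an element of this intersection. By Proposition~\ref{prop:switch-module} it lies in $\mathcal S_G\cap\Aut(G)^{(2)}=\mathcal T_G^{(2)}$, so it equals $h^{(2)}$ with $h\in\mathcal T_G\cap\Aut(G;O)$ by injectivity of the pair map. Step 2 gives $h=1$. Therefore $\mathcal S_G\Aut(G;O)^{(2)}\cong\mathcal S_G\rtimes\Aut(G;O)$.

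Composing with $\tau^{-1}$ on $\mathcal S_G$ and with Step 3 on $\Aut(G;O)$ gives $\mathbb F_2^{I_G}\rtimes(\Aut(G)/\mathcal T_G)$. The action is $\varepsilon\mapsto h\cdot\varepsilon$, and it is well defined on the quotient because $\mathcal T_G$ acts trivially on $I_G$, as Proposition~\ref{prop:switch-module} states.
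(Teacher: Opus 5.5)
Your proposal is correct and follows essentially the same route as the paper. You build the $\mathcal T_G$-correction $th\in\Aut(G;O)$ and obtain $\mathcal T_G\cap\Aut(G;O)=1$ from disjointness of twin 2-cuts, then combine faithfulness of the pair map with $\mathcal S_G\cap\Aut(G)^{(2)}=\mathcal T_G^{(2)}$ and the conjugation formula of Proposition~\ref{prop:switch-module}, spelling out details the paper leaves terse: subgroup closure, the explicit choice of $t$, and well-definedness of the quotient action.
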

\begin{proof}
Composition and inversion preserve the ordered-pair condition, so
\(\Aut(G;O)\) is a subgroup. For \(h\in\Aut(G)\), transpose the endpoints of each image cut on which
\(h\) reverses the chosen ordering. This gives \(dh\in\Aut(G;O)\), with
\(d\in \mathcal T_G\). The correction is unique because distinct cuts
are disjoint.
Consequently \(\mathcal T_G\cap\Aut(G;O)=1\), and the quotient restricts to the
stated isomorphism. The pair action is faithful by
\eqref{eq:pair-map-faithfulness}.
Proposition~\ref{prop:switch-module} therefore gives
\(\mathcal S_G\cap\Aut(G;O)^{(2)}=1\).
Its conjugation formula shows that \(\Aut(G;O)^{(2)}\) normalizes
\(\mathcal S_G\), proving the semidirect product. In quotient coordinates
its multiplication is
\[
 (\varepsilon,\overline h)(\eta,\overline k)
   =(\varepsilon+\overline h\cdot\eta,\overline{hk}).
\]
\end{proof}

\section{Automorphism groups and isomorphism normal form}
\begin{proof}[Proof of Theorem~\ref{thm:groups}]
Lemma~\ref{lem:product-twins} shows that the cases are disjoint.
By Proposition~\ref{prop:switch-module}, $\mathcal T_G$ is normal.
The induced pair action is faithful by \eqref{eq:pair-map-faithfulness}.

Let $\alpha\in\Aut(F_2(G))$. By Theorem~\ref{thm:edge-state-preservation},
$\alpha(E(G))=E(G)$. Theorem~\ref{thm:marked-map-normalization} gives
$\theta\in\mathcal S_G$ and $\sigma\in\Aut(G)$ with
$\alpha\theta^{-1}(e)=\sigma^{(2)}(e)$ for every $e\in E(G)$. Hence
\[
 \beta=(\sigma^{(2)})^{-1}\alpha\theta^{-1}\in\mathcal K_E(G),
 \qquad \alpha=\sigma^{(2)}\beta\theta.
\]
Conversely, every factor on the right consists of token automorphisms, so
\begin{equation}\label{eq:automorphism-factorization}
 \Aut(F_2(G))=\Aut(G)^{(2)}\mathcal K_E(G)\mathcal S_G.
\end{equation}
The right-hand side denotes the set of products in this order.
Since every token automorphism preserves $E(G)$, the subgroup
$\mathcal K_E(G)$ is the kernel of the action on $E(G)$ and is normal.

We will use
\begin{equation}\label{eq:kernel-induced-intersection}
 \mathcal K_E(G)\cap\Aut(G)^{(2)}=1.
\end{equation}
Indeed, if $\rho^{(2)}\in\mathcal K_E(G)$, then $\rho$ fixes each edge
of $G$ setwise. Since the graph is connected and has at least three vertices, two
distinct edges meet at a vertex. Their intersection is fixed.
For an edge fixed setwise, if one endpoint is fixed, then so is the other.
Applying this along paths gives $\rho=1$, proving the intersection.

\emph{The cut case.}
If $\mathcal Q_2(G)\ne\varnothing$, Lemma~\ref{lem:product-twins} makes
$G$ Cartesian-prime, so Theorem~\ref{thm:edge-state-kernel} gives
$\mathcal K_E(G)=1$. Equation~\eqref{eq:automorphism-factorization}
reduces to $\Aut(G)^{(2)}\mathcal S_G$. The conjugation identity
\eqref{eq:switch-conjugation} shows that the induced subgroup normalizes
$\mathcal S_G$, so this product equals $\mathcal S_G\Aut(G)^{(2)}$.
Choose orderings as in Corollary~\ref{cor:ordered-cuts}. Since
$\mathcal T_G^{(2)}\subseteq\mathcal S_G$, that corollary gives
\begin{equation}\label{eq:cut-aut-decomposition}
 \Aut(F_2(G))=\mathcal S_G\rtimes\Aut(G;O)^{(2)}.
\end{equation}
Corollary~\ref{cor:ordered-cuts} gives the stated abstract group and action.
Proposition~\ref{prop:switch-module} also gives
\[
 |\Aut(F_2(G))|=2^{|I_G|-|\mathcal Q_2(G)|}|\Aut(G)|
 =2^{\sum_{S\in\mathcal Q_2(G)}(|\pi_0(G-S)|-1)}|\Aut(G)|.
\]

\emph{The product case.}
If $G$ is a nontrivial Cartesian product, Lemma~\ref{lem:product-twins}
excludes twins at order at least five. Thus $\mathcal S_G=1$.
Equations~\eqref{eq:automorphism-factorization}
and~\eqref{eq:kernel-induced-intersection}, with normality of the kernel, give
\begin{equation}\label{eq:product-aut-decomposition}
 \Aut(F_2(G))=\mathcal K_E(G)\rtimes\Aut(G)^{(2)}.
\end{equation}
Theorem~\ref{thm:edge-state-kernel} identifies the first subgroup with
$C_2^r/\langle(1,\ldots,1)\rangle\cong C_2^{r-1}$.
By \eqref{eq:prime-exchange-action}, conjugation permutes the prime
coordinates. This preserves the all-ones subgroup and defines the stated
action on the quotient. The product formula follows, with order
$2^{r-1}|\Aut(G)|$.

\emph{The remaining case.}
Here $G$ is Cartesian-prime and has no twin 2-cut, so
$\mathcal K_E(G)=\mathcal S_G=1$. Equation~\eqref{eq:automorphism-factorization}
gives
\begin{equation}\label{eq:rigid-aut-decomposition}
 \Aut(F_2(G))=\Aut(G)^{(2)}\cong\Aut(G).
\end{equation}
\end{proof}

\begin{proof}[Proof of Theorem~\ref{thm:exhaustion}]
Theorem~\ref{thm:all} gives an isomorphism $\sigma_0:G\to H$.
Then $\alpha=\phi(\sigma_0^{(2)})^{-1}$ belongs to $\Aut(F_2(H))$.
The subgroup decompositions
\eqref{eq:cut-aut-decomposition}, \eqref{eq:product-aut-decomposition}
and~\eqref{eq:rigid-aut-decomposition},
applied to $H$, give $\alpha=\eta\rho^{(2)}$ with $\rho\in\Aut(H)$.
Here $\eta$ is a component-switch product in the cut case, a coordinate
exchange in the product case, and the identity in the remaining case.
Consequently
\[
 \phi=\eta\rho^{(2)}\sigma_0^{(2)}
      =\eta(\rho\circ\sigma_0)^{(2)}.
\]
Put $\sigma=\rho\circ\sigma_0:G\to H$. An isomorphism between $G,H$ preserves Cartesian decompositions and twin
2-cuts, so $\eta$ has the type required in the statement. In the prime case with no twin 2-cut,
$\eta=1$, and \eqref{eq:pair-map-faithfulness} gives uniqueness of $\sigma$.
Theorem~\ref{thm:edge-state-kernel} identifies the coordinate-exchange
subgroup with $\mathcal K_E$; applying
\eqref{eq:kernel-induced-intersection} to each root proves the intersection
assertion.
\end{proof}

For \(G=C_4\), the four edge
configurations and two diagonal configurations form \(F_2(C_4)=K_{4,2}\). The vertices in the two
parts have different degrees, so each part is preserved. Arbitrary
permutations within the parts give all automorphisms, of order \(4!2!=48\). The product-case
formula would give \(2|\Aut(C_4)|=16\), so it does not extend to
order four.
For cyclic vertices \(a,b,c,d\), the component switches
\[
 \tau_{\{a,c\},\{b\}}=(ab\ bc),\qquad
 \tau_{\{b,d\},\{a\}}=(ab\ ad)
\]
are noncommuting transpositions of configurations in \(E(C_4)\).
Each cut separates the endpoints of the other, so the component
conditions used in the commutation argument are not preserved.

\end{document}